\newtheorem{theo}{Theorem}[section]
\newtheorem{thm}[theo]{Theorem}
\newtheorem{lem}[theo]{Lemma}
\newtheorem{cor}[theo]{Corollary}
\newtheorem*{thm*}{Theorem}
\theoremstyle{definition}
\newtheorem{dfn}[theo]{Definition}
\newtheorem{exa}[theo]{Example}
\theoremstyle{remark}
\numberwithin{equation}{section}
\newcommand{\cri}{\mathrm{Cr}}
\newcommand{\di}{\mathrm{Di}}
\newcommand{\ta}{\theta}
\newcommand{\hc}{\mbox{$\mathbb{\widehat{C}}$}}
\newcommand{\iU}{U^\infty}
\newcommand{\ql}{quadrilateral{}}
\newcommand{\C}{\mathbb{C}}
\newcommand{\disk}{\mathbb{D}}
\newcommand{\cdisk}{\ol{\mathbb{D}}}
\newcommand{\bbd}{\mathbb{D}}
\newcommand{\ol}{\overline}
\newcommand{\sm}{\setminus}
\newcommand{\m}{\ol{m}}
\newcommand{\n}{\ol{n}}
\newcommand{\hell}{\hat{\ell}}
\newcommand{\qml}{\mathrm{QML}}
\newcommand{\bd}{\mathrm{Bd}}
\newcommand{\lam}{\mathcal{L}}
\newcommand{\hlam}{\widehat{\mathcal{L}}}
\newcommand{\ch}{\mathrm{CH}}
\newcommand{\si}{\sigma}
\newcommand{\0}{\emptyset}
\newcommand{\uc}{\mathbb{S}}
\newcommand{\g}{\mathfrak{g}}
\newcommand{\e}{\varepsilon}
\newcommand{\M}{\mathcal{M}}
\newcommand{\laq}{\mathbb{L}^q}
\newcommand{\olq}{\ol{\mathbb{L}^q_2}}
\newcommand{\Mcc}{\M^{comb}}
\begin{document}
\date{February 25, 2015, and, in revised form, June 3, 2015}

\title[The Mandelbrot set and the space of geolaminations]
{The combinatorial Mandelbrot set as the quotient of the space of geolaminations}

\author{Alexander~Blokh}

\address[Alexander~Blokh]
{Department of Mathematics\\ University of Alabama at Birmingham\\
Birmingham, AL 35294}

\email{ablokh@math.uab.edu}

\thanks{The first named author was partially
supported by NSF grant DMS--1201450 and MPI f\"ur Mathematik, Bonn,
activity ``Dynamics and Numbers''}

\author{Lex Oversteegen}

\address[Lex Oversteegen]
{Department of Mathematics\\ University of Alabama at Birmingham\\
Birmingham, AL 35294}

\email{overstee@uab.edu}


\author{Ross~Ptacek}

\address[R.~Ptacek]
{Faculty of Mathematics\\
Laboratory of Algebraic Geometry and its Applications\\
Higher School of Economics\\
Vavilova St. 7, 112312 Moscow, Russia}

\curraddr{Department of Mathematics, 1400 Stadium Rd, University of
Florida, Gainesville, FL 32611}

\email{rptacek@ufl.edu}

\author{Vladlen~Timorin}

\address[V.~Timorin]{Faculty of Mathematics\\
Laboratory of Algebraic Geometry and its Applications\\
Higher School of Economics\\
Vavilova St. 7, 112312 Moscow, Russia }

\thanks{The fourth named author was partially supported by
the Dynasty foundation grant, the Simons-IUM fellowship, RFBR grants
11-01-00654-a, 12-01-33020.
The article was prepared within the framework of a subsidy granted to the HSE 
by the Government of the Russian Federation for the implementation of the Global Competitiveness Program.}

\subjclass[2010]{Primary 37F20; Secondary 37F10, 37F50}

\keywords{Complex dynamics; laminations; Mandelbrot set; Julia set}

\begin{abstract} We interpret the combinatorial Mandelbrot set in terms
of \emph{quadratic laminations} (equivalence relations $\sim$
on the unit circle invariant under $\si_2$). To each lamination we
associate a particular \emph{geolamination} (the collection $\lam_\sim$
of points of the circle and edges of convex hulls of $\sim$-equivalence
classes) so that the closure of the set of all of them is a compact metric space with
the Hausdorff metric. Two such geolaminations are said to be
\emph{minor equivalent} if their \emph{minors} (images of their longest
chords) intersect. We show that the corresponding quotient space of
this topological space is homeomorphic to the boundary of the
combinatorial Mandelbrot set. To each equivalence class  of these
geolaminations we associate a unique lamination and its topological
polynomial so that this interpretation can be viewed as a way to endow
the space of all quadratic topological polynomials with a suitable
topology.
\end{abstract}

\maketitle

\section*{Introduction}\label{s:intro}

Studying the structure of polynomial families is one of the central
problems of complex dynamics. The first non-trivial case here is
that of quadratic polynomial family $P_c(z)=z^2+c$. The
\emph{Mandelbrot set $\M_2$} is defined as the set of the parameters
$c$ such that the trajectory of the critical point $0$ of $P_c$ does
not escape to infinity under iterations of $P_c$. Equivalently, this
is the set of all parameters $c$ such that the Julia set $J(P_c)$ of
$P_c$ is connected.

Thurston \cite{thu85} constructed a combinatorial model for $\M_2$,
which can be interpreted as follows. \emph{Laminations} are closed
equivalence relations $\sim$ on the unit circle $\uc$ in the complex
plane $\C$ such that all classes are finite and the convex hulls of all
classes are pairwise disjoint. A lamination is said to be
\emph{($\si_d$-) invariant} if it is preserved under the map
$\si_d(z)=z^d:\uc\to\uc$ (precise definitions are given in the next
section; if no ambiguity is possible, we will simply talk about \emph{invariant} laminations).
The map $\si_d$ induces a \emph{topological polynomial}
$f_\sim:\uc/\sim\to \uc/\sim$ from the \emph{topological Julia set}
$J_\sim=\uc/\sim$ to itself. If $J(P_c)$ is locally connected, then
$P_c|_{J(P_c)}$ is conjugate to $f_\sim$ for a specific lamination
$\sim$. If $d=2$, then corresponding laminations, topological
polynomials and Julia sets are said to be \emph{quadratic}. The precise
definitions are given later in the paper (in particular, topological polynomials
and topological Julia sets are defined in Subsection~\ref{ss:lam}).

Even though $\M_2\subset \C$ has a natural topology, the proper
topology on the set of all quadratic topological polynomials is much
more elusive. Thurston constructed a suitable topology on this set by
associating to each (quadratic) lamination $\sim$ a geometric object
$\lam_\sim$, called a (quadratic) \emph{geolamination}, which consists
of all chords in the boundaries of the convex hulls (in the closed unit
disk) of all equivalence classes of $\sim$.

Quadratic geolaminations are invariant under the map $\si_2$. In
particular given a chord $\ell=\ol{ab}\in\lam$, with endpoints
$a,b\in\uc$, the chord $\ol{\si_2(a)\si_2(b)}$ is also a (possibly
degenerate) chord of $\lam$ and we write
$\si_2(\ell)=\ol{\si_2(a)\si_2(b)}$. Thurston parameterizes all such
geolaminations $\lam$ by the \emph{minors $m_\lam$ of $\lam$}; the
minor $m_\lam$ of a quadratic geolamination $\lam$ is the image of a
chord in $\lam$ of maximal length. Thurston shows that the
collection of minors of all $\si_2$-invariant geolaminations is
itself a geolamination, which we denote here by $\lam_\qml$. It
turns out that $\lam_\qml$ defines a lamination that Thurston called
$\qml$ (for \emph{quadratic minor lamination}). Leaves of
$\lam_\qml$ are exactly edges of convex hulls of classes of $\qml$.
The quotient $\uc/\qml$ provides the proper topology on the set of
quadratic topological polynomials, which serves as a model for the
boundary of $\M_2$. Observe that there exists a monotone map
$p:\bd(\M_2)\to \uc/\qml$ \cite{sch09} (cf \cite{thu85, kel00}).

\begin{figure}[!htb]
    \centering
    \begin{minipage}{0.5\textwidth}
        \centering
        \includegraphics[width=0.7\linewidth, height=0.2\textheight]{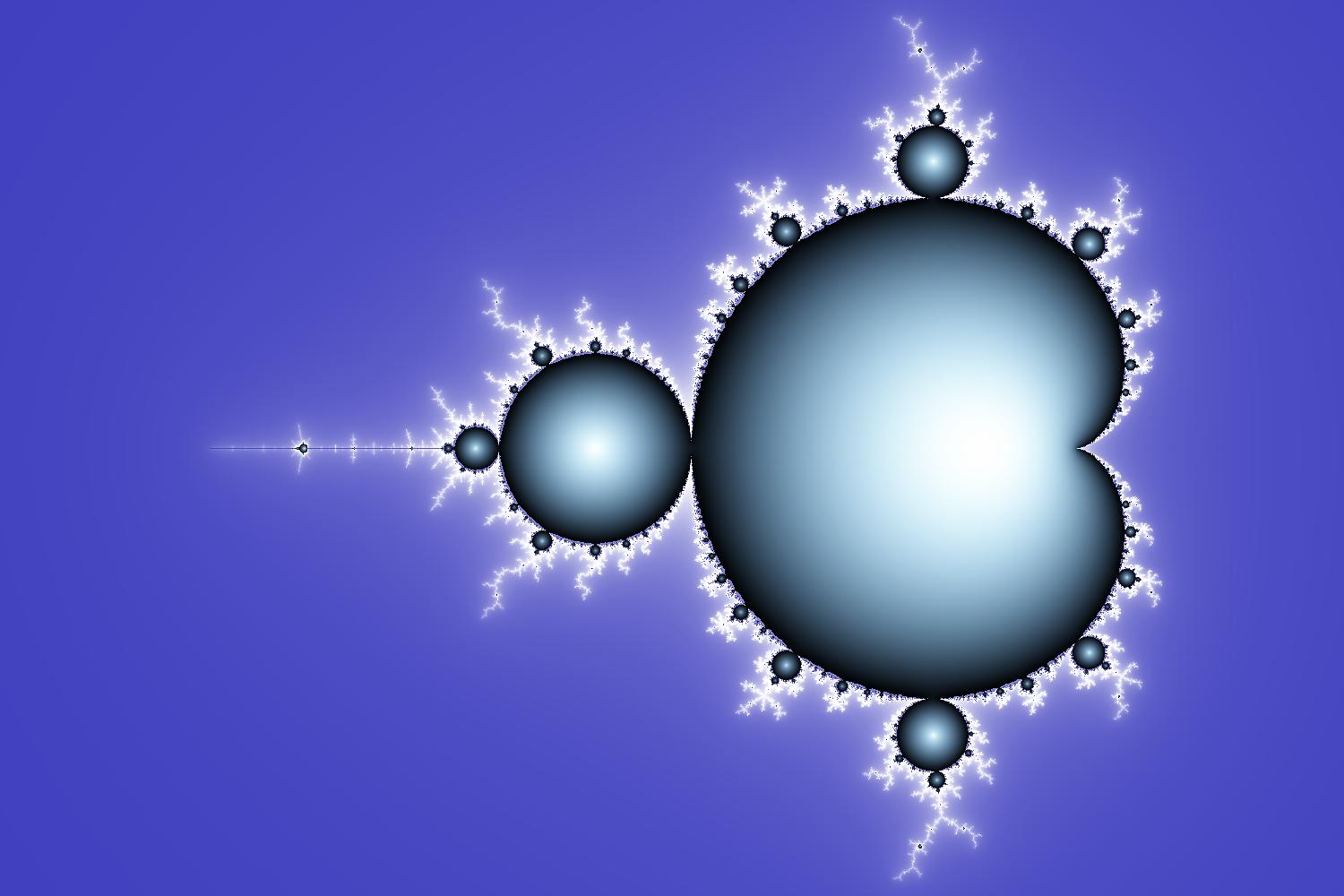}
        \caption{The Mandelbrot set}
        \label{fig:mset}
    \end{minipage}%
    \begin{minipage}{0.5\textwidth}
        \centering
        \includegraphics[width=0.7\linewidth, height=0.2\textheight]{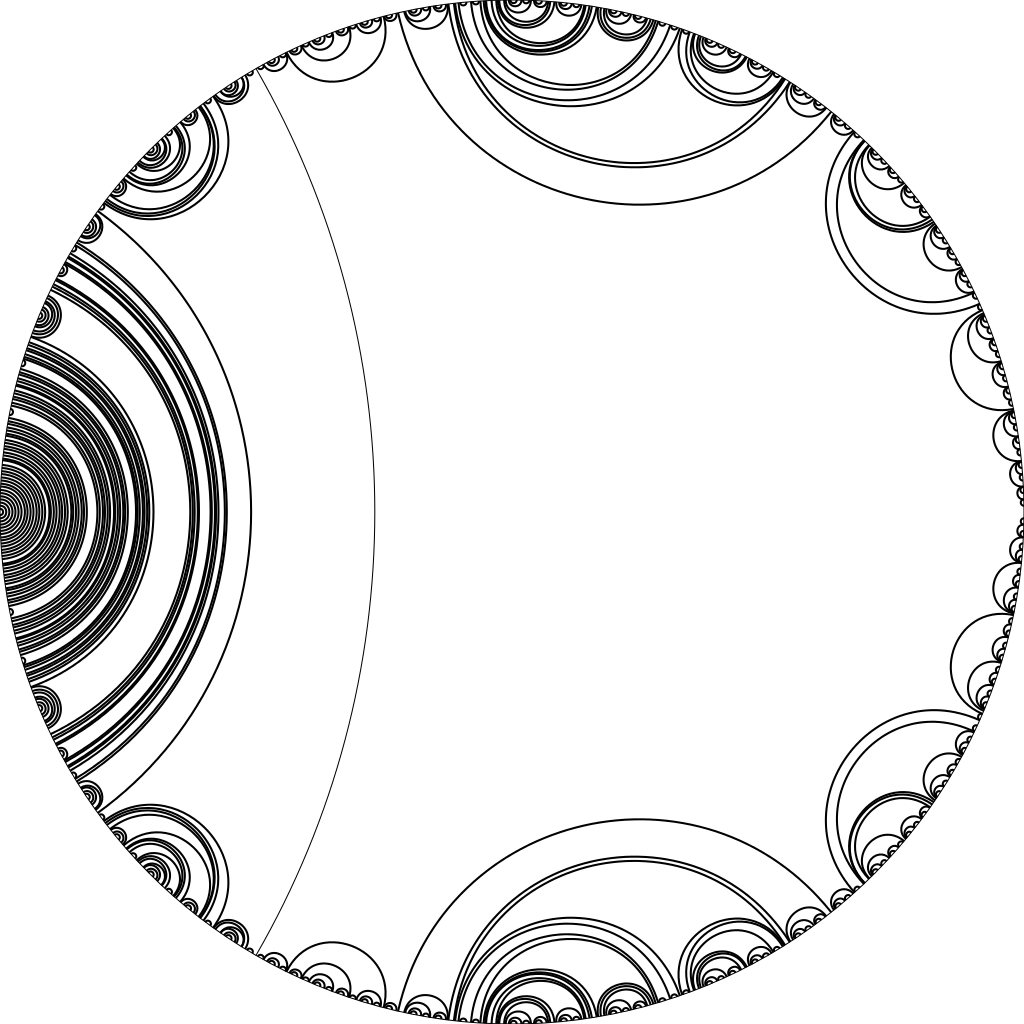}
        \caption{The (geo)lamination $\lam_\qml$}
        \label{fig:qml}
    \end{minipage}
\end{figure}

In the higher degree case no such parameterization of the set of
$\si_d$-invariant laminations is known. In particular, the proper
topology on the set of topological polynomials of degree $d$ is not
clear. Since the set of all $\si_d$-invariant geolaminations carries a
natural topology, induced by the Hausdorff distance between the unions
of their leaves, it is natural to impose this topology on the set of
geolaminations. It is well known that with this topology  the space of
geolaminations is a compact metric space. In this paper we consider a
suitable compact subspace of the space of all invariant quadratic
geolaminations with the Hausdorff metric. On this subspace we define a
closed equivalence relation with finite equivalence classes. We show
that each class corresponds to a unique $\si_2$-invariant lamination
and, hence, a unique quadratic topological polynomial. Finally we prove
that with the induced topology the corresponding quotient space is
homeomorphic to $\uc/\mathrm{QML}$.

The long term hope is to use the ideas from this paper to impose a
proper topology on the set of degree $d$ topological polynomials and
use it to obtain a combinatorial model for the boundary of the
connectedness locus $\M_d$ of the space of degree $d$ polynomials. The
analogy with polynomials is underlined by some of our rigidity results
for laminations, which parallel those for polynomials. For example, we
show in this paper that if the geolamination $\lam$ is a limit of geolaminations
$\lam_i$ corresponding to laminations $\sim_i$ and if $G$ is a
finite gap of $\lam$, then $G$ is also a gap $\lam_i$ for all large
$i$.

\noindent\textbf{Acknowledgements}. The paper was partially written as
the first named author was visiting Max-Planck-Institut f\"ur
Mathematik in Bonn during their activity ``Dynamics and Numbers''. He
would like to express his appreciation to the organizers and
Max-Planck-Institut f\"ur Mathematik for inspiring working conditions.
It is our pleasure to also thank the referee for thoughtful and useful
remarks.

\section{Preliminaries}\label{s:prelim}

A big portion of this section is devoted to (geo)laminations, a major
tool in studying both dynamics of individual complex polynomials and in
modeling certain families of complex polynomials (the
Mandelbrot set, which can be thought of as the family of all
polynomials $P_d=z^2+c$ with connected Julia set). Let $a$, $b\in \uc$.
By $[a, b]$, $(a, b)$, etc we mean the closed, open, etc
\emph{positively oriented} circle arcs from $a$ to $b$, and by $|I|$
the length of an arc $I$ in $\uc$ normalized so that the length of
$\uc$ is $1$.

\subsection{Laminations}\label{ss:lam}

Denote by $\hc$ the Riemann sphere.
For a compactum $X\subset\C$, let $\iU(X)$ be the unbounded component
of $\hc\sm X$ containing infinity. If $X$ is connected, there exists a
Riemann mapping $\Psi_X:\hc\sm\ol\bbd\to \iU(X)$; we always normalize
it so that $\Psi_X(\infty)=\infty$ and $\Psi'_X(z)$ tends to a positive
real limit as $z\to\infty$.

Consider a polynomial $P$ of degree $d\ge 2$ with Julia set $J_P$
and filled-in Julia set $K_P$. Extend $z^d=\si_d: \C\to \C$ to a map
$\ta_d$ on $\hc$. If $J_P$ is connected, then
$\Psi_{J_P}=\Psi:\hc\sm\ol\bbd\to \iU(K_P)$ is such that $\Psi\circ
\ta_d=P\circ \Psi$ on the complement of the closed unit disk
\cite{hubbdoua85, mil00}. If $J_P$ is locally connected, then $\Psi$
extends to a continuous function
$$
\ol{\Psi}: {\hc\sm\bbd}\to
\ol{\hc\setminus K_P},
$$
and $\ol{\Psi} \circ\,\ta_d=P\circ\ol{\Psi}$ on the complement of the
open unit disk; thus, we obtain a continuous surjection
$\ol\Psi\colon\bd(\bbd)\to J_P$ (the \emph{Carath\'eodory loop}); throughout the paper
by $\bd(X)$ we denote the boundary of a subset $X$ of a topological space.
Identify $\uc=\bd(\bbd)$ with $\mathbb{R}/\mathbb{Z}$. In this case set
$\psi=\ol{\Psi}|_{\uc}$.

Define an equivalence relation $\sim_P$ on $\uc$ by $x \sim_P y$ if
and only if $\psi(x)=\psi(y)$, and call it the ($\si_d$-invariant)
{\em lamination of $P$}; since $\Psi$ defined above conjugates
$\ta_d$ and $P$, the map $\psi$ semiconjugates $\si_d$ and
$P|_{J(P)}$, which implies that $\sim_P$ is invariant. Equivalence
classes of $\sim_P$ have pairwise disjoint convex hulls. The
\emph{topological Julia set} $\uc/\sim_P=J_{\sim_P}$ is homeomorphic
to $J_P$, and the \emph{topological polynomial}
$f_{\sim_P}:J_{\sim_P}\to J_{\sim_P}$, induced by $\si_d$, is
topologically conjugate to $P|_{J_P}$.


An equivalence relation $\sim$ on the unit circle, with similar
properties to those of $\sim_P$ above,  can be introduced abstractly
without any reference to the Julia set of a complex polynomial.

\begin{dfn}[Laminations]\label{d:lam}

An equivalence relation $\sim$ on the unit circle $\uc$ is called a
\emph{lamination} if it has the following properties:

\noindent (E1) the graph of $\sim$ is a closed subset in $\uc \times
\uc$;

\noindent (E2) convex hulls of distinct equivalence classes are
disjoint;


\noindent (E3) each equivalence class of $\sim$ is finite. 
\end{dfn}

For a closed set $A\subset \uc$ we denote its convex hull by
$\ch(A)$. Then by an \emph{edge} of $\ch(A)$ we mean a closed
segment $I$ of the straight line connecting two points of the
unit circle such that $I$ is contained in the boundary $\bd(\ch(A))$
of $\ch(A)$. By an \emph{edge} of a $\sim$-class we mean an edge of
the convex hull of that class.

\begin{dfn}[Laminations and dynamics]\label{d:si-inv-lam}
A lamination $\sim$ is ($\si_d$-){\em in\-va\-riant} if:

\noindent (D1) $\sim$ is {\em forward invariant}: for a class
$\mathbf{g}$, the set $\si_d(\mathbf{g})$ is a class too;

\noindent (D2) for any $\sim$-class $\mathbf{g}$, the map
$\si_d:\mathbf{g}\to\si_d(\mathbf{g})$ extends to $\uc$ as an
orientation preserving covering map such that $\mathbf{g}$ is the full
preimage of $\si_d(\mathbf{g})$ under this covering map.

\end{dfn}

Again, if this does not cause ambiguity, we will simply talk about
\emph{invariant} laminations.

Definition~\ref{d:si-inv-lam} (D2) has an equivalent version. Given a
closed set $Q\subset \uc$, a (positively oriented) {\em hole} $(a, b)$
of $Q$ (or of $\ch(Q)$) is a component of $\uc\sm Q$. Then (D2) is
equivalent to the fact that for a $\sim$-class $\mathbf{g}$ either
$\si_d(\mathbf{g})$ is a point or for each positively oriented hole
$(a, b)$ of $\mathbf{g}$ the positively oriented arc $(\si_d(a),
\si_d(b))$ is a hole of $\si_d(\mathbf{g})$. From now on, we assume
that, unless stated otherwise, $\sim$ is a $\si_d$-invariant
lamination.

\begin{figure}[!htb]
    \centering
    \begin{minipage}{0.5\textwidth}
        \centering
        \includegraphics[width=0.7\linewidth, height=0.2\textheight]{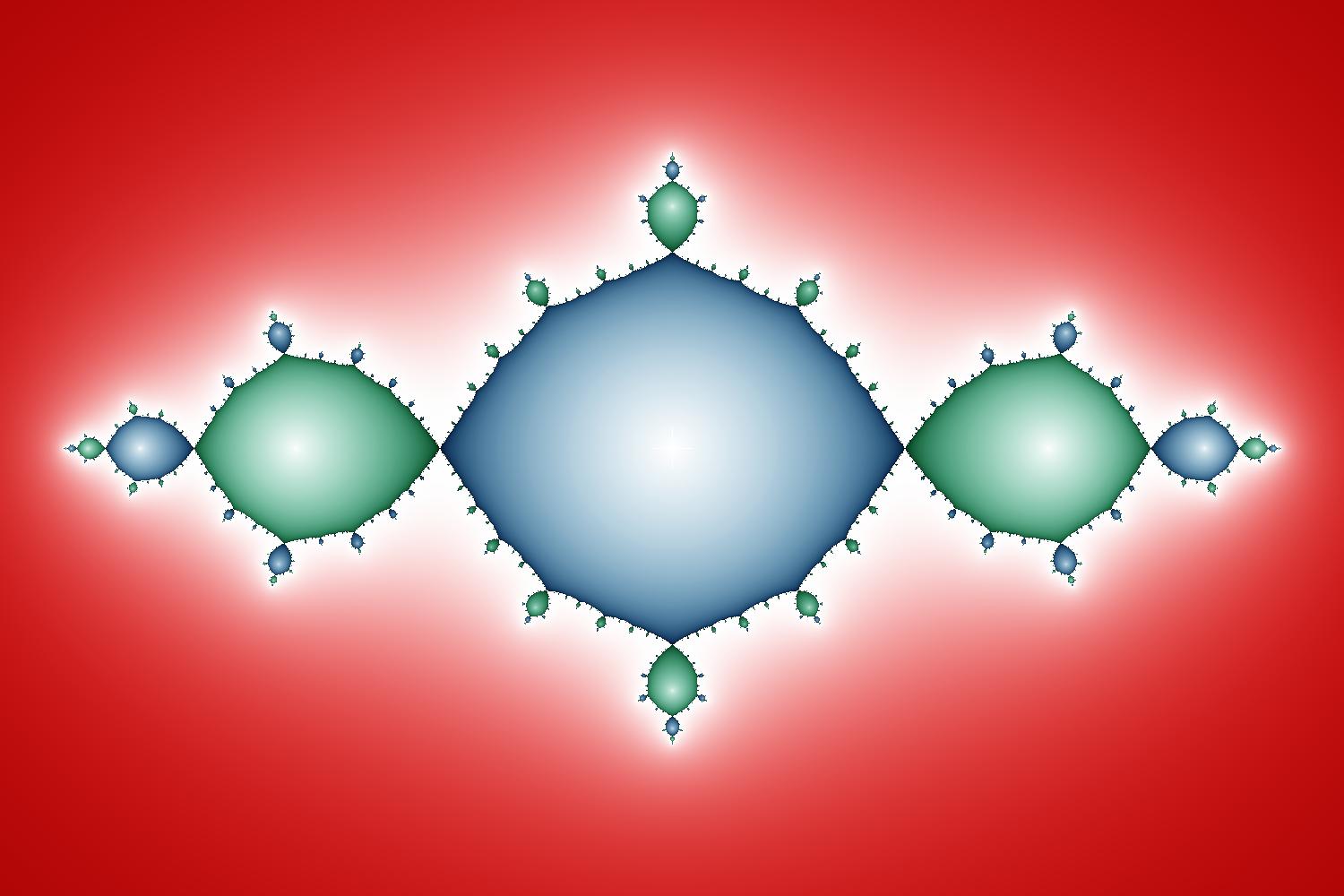}
        \caption{The Julia set of $f(z)=z^2-1$ (so-called ``basilica'')}
        \label{fig:basil}
    \end{minipage}%
    \begin{minipage}{0.5\textwidth}
        \centering
        \includegraphics[width=0.7\linewidth, height=0.2\textheight]{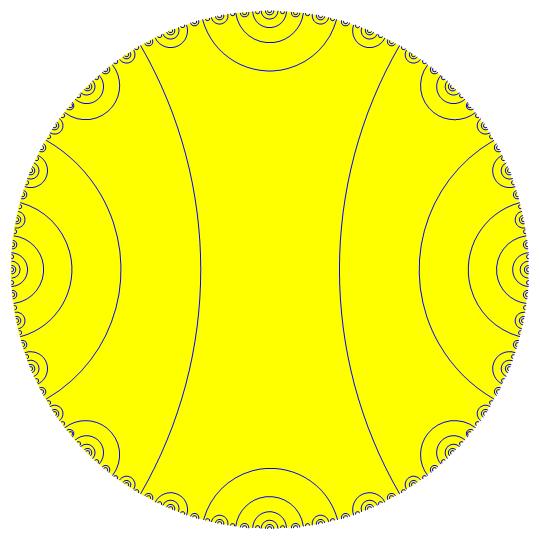}
        \caption{The (geo)lamination for the Julia set of $z^2-1$}
        \label{fig:qml}
    \end{minipage}
\end{figure}

Given $\sim$, consider the \emph{topological Julia set}
$\uc/\sim=J_\sim$ and the \emph{topological polynomial}
$f_\sim:J_\sim\to J_\sim$ induced by $\si_d$. Using Moore's Theorem,
embed $J_\sim$ into $\C$ and extend the quotient map
$\psi_\sim:\uc\to J_\sim$ to $\C$ with the only non-trivial fibers
being the convex hulls of non-degenerate $\sim$-classes. A
\emph{Fatou domain} of $J_\sim$ (or of $f_\sim$) is a bounded
component of $\C\sm J_\sim$. If $U$ is a periodic Fatou domain of
$f_\sim$ of period $n$, then $f^n_\sim|_{\bd(U)}$ is either
conjugate to an irrational rotation of $\uc$ or to $\si_k$ with some
$1<k\le d$ \cite{bl02}. In the case of irrational rotation, $U$ is
called a \emph{Siegel domain}.
The complement of the unbounded component of $\C\sm J_\sim$ is called
the \emph{filled-in topological Julia set} and is denoted by $K_\sim$.
Equivalently, $K_\sim$ is the union of $J_\sim$ and its bounded Fatou
domains. If the lamination $\sim$ is fixed, we may omit $\sim$ from the
notation. By default, we consider $f_\sim$ as a self-mapping of
$J_\sim$. For a collection $\mathcal R$ of sets,
denote the union of all sets from $\mathcal R$ by $\mathcal R^+$.

\begin{dfn}[Leaves]\label{d:geola}
If $A$ is a $\sim$-class, call an edge $\ol{ab}$ of $\ch(A)$ a
\emph{leaf (of $\sim$)}. All points of $\uc$ are also called
(\emph{degenerate\emph{)} leaves (of $\sim$)}.
\end{dfn}

The family of all leaves of $\sim$ is closed (the limit of a sequence
of leaves of $\sim$ is a leaf of $\sim$); the union of all leaves of
$\sim$ is a continuum. Extend $\si_d$ (keeping the notation) linearly
over all \emph{individual leaves of $\sim$} in $\ol{\disk}$. In other words,
for each leaf of $\sim$ we define its own linear map; note that in the end even though the extended $\si_d$ is not
well defined on the entire disk, it is well defined on the union of all
leaves of $\sim$.

\subsection{Geometric laminations}\label{ss:geol}

The connection between laminations, understood as equivalence
relations, and the original approach of Thur\-ston's \cite{thu85}
can be explained once we introduce a few key notions. Assume that
$\sim$ is a $\si_d$-invariant lamination. Thurston studied
collections of chords in $\disk$ similar to collections of leaves of
$\sim$ with no equivalence relation given.

\begin{dfn}[Geometric laminations, \rm{cf.} \cite{thu85}]\label{d:geolam}
Two distinct chords in $\cdisk$ are said to be \emph{unlinked} if they
meet at most in a common endpoint; otherwise they are said to be
\emph{linked}, or to \emph{cross} each other. A \emph{geometric
pre-la\-mi\-na\-tion} $\lam$ is a set of (possibly degenerate) chords
in $\ol{\disk}$ such that any two distinct chords from $\lam$ are
unlinked; $\lam$ is called a \emph{geolamination} if all points of
$\uc$ are elements of $\lam$, and $\lam^+$ is closed. Elements of
$\lam$ are called \emph{leaves} of $\lam$. By a \emph{degenerate} leaf
(chord) we mean a singleton in $\uc$. The continuum $\lam^+\subset
\cdisk$ is called the \emph{solid} (of $\lam$).
\end{dfn}

Important objects related to a geolamination are its gaps.

\begin{dfn}[Gaps]\label{d:gaps-s}
Let $\lam$ be a geolamination. The closure in $\C$ of a non-empty
component of $\disk\sm \lam^+$ is called a \emph{gap} of $\lam$.
If $G$ is a gap or a leaf, call the set $G'=\uc\cap G$ the \emph{basis
of $G$}. A gap is said to be \emph{finite $($infinite, countable,
uncountable$)$} if its basis is finite (infinite, countable,
uncountable). Uncountable gaps are also called \emph{Fatou gaps}.
Points of $G'$ are called \emph{vertices} of $G$.
\end{dfn}

Now let us discuss geolaminations in the dynamical context. A chord
(e.g., leaf of a (geo)\-la\-mi\-nation) is called
\emph{($\si_d$-)critical} if its endpoints have the same image under
$\si_d$. If it does not cause ambiguity, we will simply talk about \emph{critical} chords.
Definition~\ref{d:geolaminv} was introduced in \cite{thu85}.

\begin{dfn}[Invariant geolaminations in the sense of Thurston]\label{d:geolaminv}
A geolamination $\lam$ is said to be \emph{($\si_d$-)invariant in
the sense of Thurston} if the following conditions are satisfied:

\begin{enumerate}

\item (Leaf invariance) For each leaf $\ell\in \lam$, the set
    $\si_d(\ell)$ is a leaf in $\lam$ (if $\ell$ is critical, then
    $\si_d(\ell)$ is degenerate). For a non-degenerate leaf
    $\ell\in\lam$, there are $d$ pairwise disjoint leaves
    $\ell_1,\dots,\ell_d\in\lam$ with $\si_d(\ell_i)=\ell, 1\le
    i\le d$.

\item (Gap invariance) For a gap $G$ of $\lam$, the set
    $H=\ch(\si_d(G'))$ is a point, a leaf, or a gap of $\lam$, in which case
    $\si_d:\bd(G)\to \bd(H)$ is a positively oriented composition
    of a monotone map and a covering map (thus if $G$ is a gap with
    finitely  many edges all of which are critical, then its image
    is a singleton).
\end{enumerate}

\end{dfn}

Again, if it does not cause ambiguity we will simply talk about geolaminations
which are \emph{invariant} in the sense of Thurston.

We will use a special extension $\si^*_{d, \lam}=\si_d^*$ of $\si_d$ to
the closed unit disk associated with $\lam$. On $\uc$ and all leaves of
$\lam$, we set $\si^*_d=\si_d$. Define $\si^*_d$ on the interiors of
gaps using a standard barycentric construction \cite{thu85}. For
brevity, we sometimes write $\si_d$ instead of $\si^*_d$.

A useful fact about ($\si_d$-invariant) geolaminations in the sense
of Thurston is that we can define a topology on their family by
identifying each ($\si_d$-invariant) geolamination $\lam$ with its
laminational solid $\lam^+$ and using the Hausdorff metric on this
family of laminational solids. This produces a compact
metric space of laminational solids.

The most natural situation, in which $\si_d$-invariant geolaminations in
the sense of Thurston appear deals with $\si_d$-invariant laminations.

\begin{dfn}\label{d:q-geolam}
Suppose that $\sim$ is a ($\si_d$-invariant) lamination. The family
$\lam_\sim$ of all leaves of $\sim$ is called the \emph{geolamination
generated by $\sim$} or a \emph{($\si_d$-invariant) q-geolamination}.
\end{dfn}

The example below shows that not all  invariant geolaminations in the sense
of Thurston are sibling invariant.
Suppose that points $\hat x_1, \hat y_1, \hat z_1, \hat x_2, \hat
y_2, \hat z_2$ are positively ordered on $\uc$ and $H=\ch(\hat x_1
\hat y_1 \hat z_1 \hat x_2 \hat y_2 \hat z_2)$ is a critical hexagon
of an invariant q-geolamination $\lam$ such that $\si^*_d:H\to T$ maps
$H$ in the $2$-to-$1$ fashion onto the triangle $T=\ch(xyz)$ with
$\si_d(\hat x_i)=x, \si_d(\hat y_i)=y, \si_d(\hat z_i)=z$. Now, add
to the geolamination $\lam$ the leaves $\ol{\hat x_1 \hat z_1}$ and
$\ol{\hat x_1 \hat x_2}$ and all their pullbacks along the backward
orbit of $H$ under $\si^*_d$. Denote the thus created geolamination
$\lam'$. It is easy to see that $\lam'$ is invariant in the sense of Thurston
but not sibling invariant because $\ol{\hat x_1 \hat z_1}=\ell$
cannot be completed to a full sibling collection (clearly, $H$ does
not contain siblings of $\ell$).


\begin{figure}[h!]
\centering\def\svgwidth{.7\columnwidth}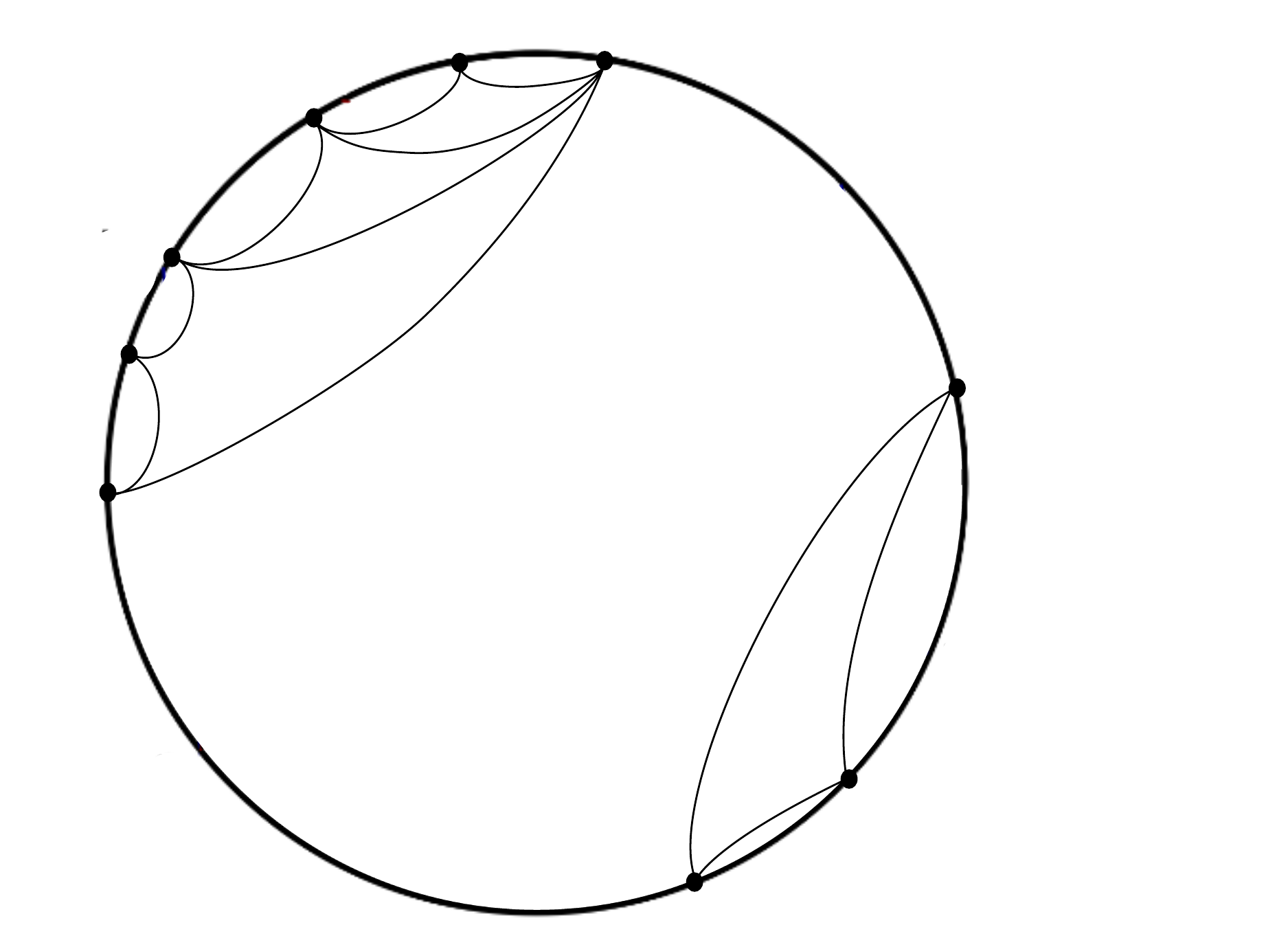
 \caption{An example of a  geolamination invariant in the sense of Thurston which is not sibling invariant.
 The leaf $\ol{\hat x_1 \hat z_1}$ has no siblings in $H$.}
 \label{nosibf}
\end{figure}

Another example can be given in the case of quadratic (i.e.,
$\si_2$-invariant) geolaminations; since it will be used in what
follows we describe it separately.

\begin{figure}[h]\label{notq}
   \begin{tikzpicture}
\draw[solid,ultra thick] (0,0) circle (4);
\draw[very thick] (-4,0) -- (4,0);

\node at (0,1) {\huge G};
\node at (0,-1) {\huge G$'$};

\draw[ultra thick] (-4,0) arc (270:360:4);
\draw[ultra thick] (4,0) arc (90:180:4);
\node at (-4.5,0) { $\frac{1}{2}$};
\node at (4.5, 0) { $0$};

\draw[ultra thick] (0,4) arc (180:315:1.65);
\draw[ultra thick] (0,-4) arc (0:135:1.65);
\node at (0,4.5) { $\frac{1}{4}$};
\node at (0,-4.5) { $\frac{3}{4}$};

\draw[ultra thick] (2.828,2.828) arc (180:247:1.5);
\draw[ultra thick] (-2.828,-2.828) arc (0:67:1.5);
\node at (3.3,3.3) { $\frac{1}{8}$};
\node at (-3.3,-3.3) { $\frac{5}{8}$};

\draw[ultra thick] (-4,0) arc (270:404:1.7);
\draw[ultra thick] (4,0) arc (90:224:1.7);
\node at (3.3,-3.3) { $\frac{3}{8}$};
\node at (-3.3,3.3) { $\frac{7}{8}$};

\draw[ultra thick] (3.72,1.48) arc (180:212:1.5);
\draw[ultra thick] (-3.72,-1.48) arc (0:32:1.5);
\node at (4.2,1.6) { $\frac{1}{16}$};
\node at (-4.2,-1.6) { $\frac{9}{16}$};

\end{tikzpicture}

    \caption{An example of a geolamination which is not a q-geolamination}
      \label{cate-fin}
\end{figure}
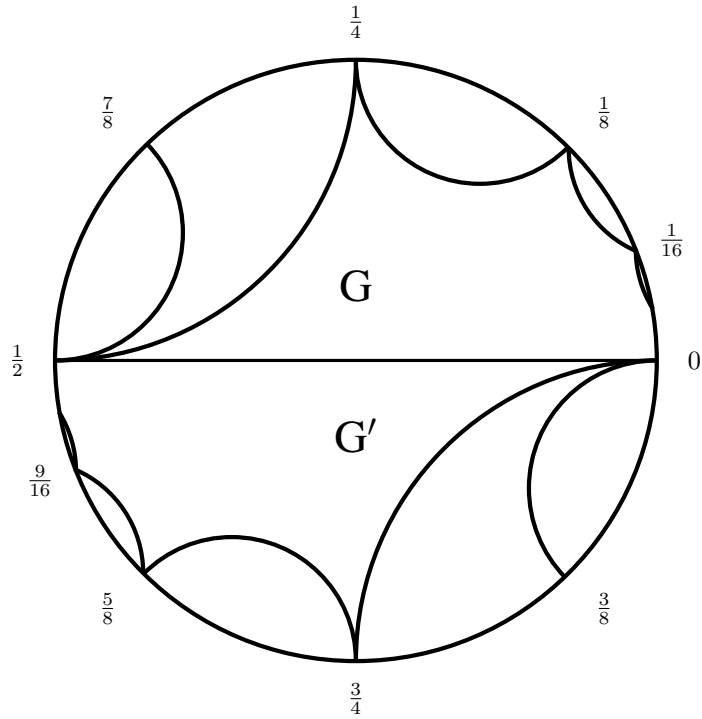

\begin{exa}\label{e:pull12}
Construct a quadratic geolamination $\lam_{12}$ as follows. Start
$\lam_{12}$ with the chord $\ol{0\frac12}=\di$ (this is the first
step in the construction). Then, on the second step, add to
$\lam_{12}$ one first pullback of $\di$ under $\si_2$ unlinked with
$\di$, namely, the chord $\ol{\frac14\frac12}$. Then, on the third
step, add one first pullback of $\ol{\frac14\frac12}$, namely,
$\ol{\frac18\frac14}$. Continue this construction so that on the
$n$-th step we add the leaf $\ol{2^{-n} 2^{-n+1}}$. This creates a
gap $G$ located above $\di$. Then construct a \emph{sibling} gap
$G'$ of $G$ by rotating $G$ by the angle $\pi$. From this moment on
we can pull back $G$ and $G'$ (or, equivalently, their edges)
choosing for each leaf its uniquely defined two pullbacks which do
not cross the already constructed leaves (the pullback construction
of an invariant geolamination with given critical leaves is actually
due to Thurston and is used a lot in his preprint \cite{thu85}). In
the end we will obtain an invariant geolaminations in the sense of
Thurston. It is easy to see that this is not a q-geolamination. Gaps
similar to $G$ are actually called \emph{caterpillar} gaps (see
Definition~\ref{d:caterpillar} given below).
\end{exa}

It is easy to see that our terminology is consistent in the sense that
($\si_d$-invariant) q-geolaminations satisfy properties from
Definitions~\ref{d:geolam} and \ref{d:geolaminv} and are, therefore,
($\si_d$-invariant) geolaminations. In what follows we talk
interchangeably about leaves (gaps) of $\sim$ or leaves (gaps) of
$\lam_\sim$.

\begin{dfn}[Critical gaps]\label{d:crit}
A gap\, $G$ of a (geo)lamination is said to be \newline
\emph{($\si_d$-)critical} if for each $y\in \si_d(G')$ the set
$\si_d^{-1}(y)\cap G'$ consists of at least $2$ points.
\end{dfn}

If it does not cause ambiguity, we will simply talk about \emph{critical}
gaps.

Slightly abusing the language, we will call the family of all degree
$d$ polynomials with connected Julia sets the \emph{connectedness
locus of degree $d$}. As was explained above, $\si_d$-invariant
laminations are naturally related to polynomials with locally
connected Julia sets.

However this leaves us with a problem of associating laminations (or
related objects) to polynomials $P$ whose Julia sets are not locally
connected. As one of the central problems in complex dynamics is
studying polynomials with connected Julia sets, in our paper we
consider this issue only for such polynomials. A natural approach here
is as follows. Suppose that $P$ is a polynomial of degree $d$ with
connected but not locally connected Julia set. Consider a sequence of
polynomials $P_i\to P$ with locally connected Julia sets $J_{P_i}$
 (this is always possible in the quadratic case). As we explained above, such polynomials
$P_i$ generate their canonical laminations $\sim_{P_i}$. One can hope
to use the appropriately designed limit transition and thus to define
the limit lamination $\sim_P$ associated with $P$.

To this end we consider q-geolaminations $\lam_{P_i}$ and associated
with them continua $\lam^+_{P_i}$. Assume that these continua converge
to a continuum in $\cdisk$, which itself is the union of pairwise
unlinked chords in $\cdisk$. These chords form a family of chords, which
turns out to be a $\si_d$-invariant geolamination (see
Definition~\ref{d:geolaminv}). However, this geolamination cannot be
associated with $P$ in a canonical way as we assumed that $J_P$ is
\emph{not} locally connected. Also, depending on the polynomial $P$ it
may happen that more than one limit geolamination can be associated
with $P$ as above. Nevertheless, at least in the quadratic case there
are only \emph{finitely many} such limit geolaminations. Denote this
\emph{finite} collection
of limit geolaminations by $\mathfrak L(P)$.

The idea is to associate to the collection $\mathfrak L(P)$ a \emph{unique} lamination
$\sim_P$ and its q-geolamination $\lam_{\sim_P}$ and to declare them
generated by $P$. Thus, to each polynomial $P$ we associate two
objects: a collection $\mathfrak L(P)$ of its limit geolaminations, and, on the other
hand, the corresponding lamination $\sim_P$ uniquely associated with
the collection $\mathfrak L(P)$. If now we consider these collections
$\mathfrak L(P)$ of limit
geolaminations as classes of equivalence and factor the closure of
the space of all q-geolaminations accordingly, we get a quotient space. This
quotient space topologizes the set of all quadratic laminations (or,
equivalently, the set of all quadratic topological polynomials).

To sum it all up, one can say, that from the point of view of
polynomials only laminations and q-geolaminations are important. It
follows that to understand the structure of the connectedness locus
of degree $d$ it is natural to study the closure of all
$\si_d$-invariant laminations. Thus, we need to define a suitable
topology on the family of all $\si_d$-invariant laminations, which
would reflect the topology of the connectedness locus.

Our approach to this problem is as follows. First, following Thurston
we associate to each $\si_d$-invariant lamination $\sim$ its
q-geolamination $\lam_\sim$. To define a suitable topology on the
family of all $\si_d$-invariant q-geolaminations one can identify this
family with the family of their laminational solids endowed with the
Hausdorff metric.
However,\, taken ``as is'' the resulting topological space cannot serve
its purpose because the limit of laminational solids of
\emph{q-geolaminations} often is not a laminational solid of a
\emph{q-geolamination} (even though by the remark above this limit is a
laminational solid of some \emph{$\si_d$-invariant geolamination}).
Thus, even if a sequence of q-geolaminations is such that their solids
converge to a solid of a $\si_d$-invariant geolamination, we cannot
directly associate a q-geolamination to this limit. This justifies our
study of \emph{limit geolaminations} (more precisely, of geolaminations
that are limits of $\si_d$-invariant q-geolaminations), which is done
for an arbitrary degree $d$ in Section~\ref{s:limitg}.

We overcome the obstacle just described in the case when $d=2$ (we call all
$\si_2$-invariant laminations and geolaminations \emph{quadratic}) as
follows. Take all q-geolaminations and limits of their laminational
solids. The resulting compact metric space of laminational solids is
then factored according to a specific natural equivalence related to
studying and comparing critical sets of quadratic q-geolaminations and
their limits. In this way we construct the appropriate quotient space
of the space of laminational solids of all quadratic q-geolaminations
and their limits in the Hausdorff metric (using our identifications we
can also talk about the quotient space of the space of all quadratic
q-geolaminations and their limits). We then prove that this quotient
space is homeomorphic to the combinatorial Mandelbrot set $\Mcc$.

To implement our program we will work with so-called \emph{sibling
($\si_d$-in\-va\-riant) geolaminations}. They form a closed subspace
of the space of all $\si_d$-invariant geolaminations, which still
contains all q-geolaminations (in other words, q-geolaminations and
all their limits are sibling $\si_d$-invariant). Since our main
interest lies in studying q-geolaminations and their limits, it will
be more convenient to work with sibling $\si_d$-invariant
geolaminations than with $\si_d$-invariant geolaminations in the
sense of Thurston. Other advantages of working with sibling
$\si_d$-invariant geolaminations are that they are defined through
properties of their leaves (gaps are not involved in the definition)
and that the space of all of them is smaller (and hence easier to
deal with) than the space of all $\si_d$-invariant geolaminations.

\begin{dfn}\label{d:siblinv}
A geolamination $\lam$ is \emph{sibling ($\si_d$-)invariant}
provided:
\begin{enumerate}
\item for each $\ell\in\lam$, we have $\si_d(\ell)\in\lam$,
\item \label{2}for each $\ell\in\lam$ there exists $\ell'\in\lam$ so that $\si_d(\ell')=\ell$.
\item \label{3} for each $\ell\in\lam$ so that $\si_d(\ell)=\ell'$
    is a non-degenerate leaf, there exist d {\bf disjoint} leaves
    $\ell_1,\dots,\ell_d$ in $\lam$ so that $\ell=\ell_1$ and
    $\si_d(\ell_i)=\ell'$ for all $i=1,\dots,d$.
\end{enumerate}
\end{dfn}

Let us list a few properties of sibling $\si_d$-invariant
geolaminations.

\begin{thm}[\cite{bmov13}]\label{t:siblinv} Sibling $\si_d$-invariant
geolaminations are invariant in the sense of Thurston. The space of
all sibling $\si_d$-invariant geolaminations is compact. All
geolaminations generated by $\si_d$-invariant laminations are
sibling $\si_d$-invariant.
\end{thm}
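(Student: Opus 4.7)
The plan is to prove the three assertions in turn, exploiting the fact that the first two axioms of Definition~\ref{d:siblinv} are essentially continuity statements while the third (disjointness of a full sibling family) carries the real content.

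For the first assertion, leaf invariance in the sense of Thurston follows immediately from axioms (1) and (3) of the sibling definition; the work is in deriving gap invariance. Given a gap $G$ of a sibling $\si_d$-invariant geolamination $\lam$, I would first argue that $H=\ch(\si_d(G'))$ meets no leaf of $\lam$ in its interior: by axiom (2), any such leaf would have a preimage in $\lam$, and that preimage chord (together with its sibling family) would be forced by axiom (3) and unlinkedness to cross some edge of $G$, a contradiction. Hence $H$ is a point, leaf, or gap. The boundary map $\si_d\colon \bd(G)\to\bd(H)$ then splits into the linear image on each edge (a leaf of $\lam$ by axiom (1)) and $\si_d$ itself on each boundary arc in $\uc$; because $\si_d$ is orientation-preserving and monotonically increasing along $\uc$, this is a positively oriented composition of a monotone map (collapsing critical edges of $G$) and a covering map, as required.

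For compactness, note that the space of nonempty closed subsets of $\cdisk$ under the Hausdorff metric is itself compact, so it suffices to show that the set of solids $\lam^+$ of sibling $\si_d$-invariant geolaminations is closed. Suppose $\lam_n\to\lam$ in the Hausdorff sense. Standard arguments show that $\lam$ is a geolamination (limits of pairwise unlinked chords are unlinked; $\uc\subset\lam$ is preserved). Axiom (1) follows from continuity of $\si_d$ on $\cdisk$. For axiom (3), take a non-degenerate $\ell\in\lam$ and approximate $\ell=\lim \ell_n$ with non-degenerate $\ell_n\in\lam_n$; each $\ell_n$ admits $d$ disjoint siblings $\ell_{n,1},\dots,\ell_{n,d}$, and along a subsequence $\ell_{n,i}\to\ell_i\in\lam$. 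Since $\ell$ is non-degenerate, its $d$ chord-preimages in $\cdisk$ are distinct, non-degenerate, and pairwise disjoint by construction; the $\ell_i$ are forced to be precisely these preimages, so disjointness survives the limit. Axiom (2) then follows from (3) for non-degenerate leaves, and is trivial for degenerate ones since every point of $\uc$ has a $\si_d$-preimage in $\uc\subset\lam$.

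For the third assertion, let $\sim$ be a $\si_d$-invariant lamination and $\lam_\sim$ its q-geolamination. Axiom (1) is immediate from (D1) together with the observation that an edge of a $\sim$-class maps, under $\si_d$, to an edge (or a degenerate leaf) of the image class, by the covering-map structure (D2). For axiom (3), given a non-degenerate leaf $\ell'=\ol{ab}$ of $\lam_\sim$ realized as an edge of a class $\mathbf{g}_1$, the $d$ chord-preimages of $\ell'$ in $\cdisk$ have endpoint pairs lying in $\si_d^{-1}(\{a,b\})\subset\uc$. By (D2) applied to the class $\mathbf{g}$ with $\si_d(\mathbf{g})=\mathbf{g}_1$, the cyclic-order-preserving covering structure forces each such chord-preimage to be an edge either of $\mathbf{g}$ or of another $\sim$-class whose hull is unlinked with $\ch(\mathbf{g})$; in either case each preimage is a leaf of $\lam_\sim$. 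Non-degeneracy of $\ell'$ gives that the $d$ chord-preimages are pairwise disjoint, so the required sibling family exists. Axiom (2) follows from (3) (and trivially on degenerate leaves).

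The step I expect to require the most care is the compactness argument at axiom (3): one must rule out the scenario in which two of the limiting siblings $\ell_i,\ell_j$ coincide or share an endpoint, which would break the count of $d$ disjoint siblings. The escape is that the $d$ natural chord-preimages of the non-degenerate limit $\ell$ are rigidly determined and automatically pairwise disjoint, so the limits $\ell_i$ are the correct $d$ disjoint chords regardless of how the approximants are chosen. A secondary delicate point is the gap-invariance argument in part one, where one must check that the monotone-plus-covering structure on $\bd(G)\to\bd(H)$ really does assemble from the edge-by-edge and arc-by-arc behavior of $\si_d$; this is essentially bookkeeping but needs the non-crossing of $H$ with $\lam$ established beforehand.
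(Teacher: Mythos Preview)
The paper does not supply a proof of this theorem; it is quoted from \cite{bmov13}. So there is no in-paper argument to compare against, and your sketch must stand on its own.

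The overall structure is right, and the compactness argument is essentially complete (the phrase ``rigidly determined'' is misleading---the $2d$ preimage endpoints of a non-degenerate leaf admit two non-crossing perfect matchings, not one---but since each approximating sibling family exhausts all $2d$ preimage endpoints without repetition, the limit family does too, and disjointness follows). The real problem is the first assertion.

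Your gap-invariance step asserts that a leaf $\ell$ in the interior of $H=\ch(\si_d(G'))$ would force its sibling preimage family to cross an edge of $G$. This is not justified. The endpoints of $\ell$ lie in $\si_d(G')$, so at least one preimage endpoint lies in $G'$; but the sibling leaf through that point may be an edge of $G$ (whose image can perfectly well be a chord interior to $H$, since $\si_d$ need not take edges of $G$ to edges of $H$), or it may leave $G$ through a vertex into an adjacent gap without crossing anything. Ruling these possibilities out, and then assembling the positively oriented monotone-plus-covering structure on $\bd(G)\to\bd(H)$, is the main content of the theorem in \cite{bmov13} and occupies several lemmas there. Labelling it ``essentially bookkeeping'' is a significant underestimate; this is exactly the step where the sibling axiom does nontrivial work.

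A smaller gap in the third assertion: ``the $d$ chord-preimages of $\ell'$'' is not well-defined, and there is in general no single class $\mathbf{g}$ with $\si_d(\mathbf{g})=\mathbf{g}_1$---several classes may map to $\mathbf{g}_1$. The clean fix is to take the hole $(a,b)$ of $\mathbf{g}_1$ behind $\ell'$, pull it back to $d$ disjoint arcs $(a_k,b_k)$, and argue via (D2) that each such arc is a hole of whichever class contains $a_k$; the $d$ chords $\ol{a_kb_k}$ are then edges of (possibly different) classes and hence the required disjoint sibling leaves of $\lam_\sim$.
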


While by Theorem~\ref{t:siblinv} all sibling $\si_d$-invariant
geolaminations are invariant in the sense of Thurston, the opposite is
not true already for quadratic geolaminations. Indeed, consider
Example~\ref{e:pull12} and the geolamination $\lam_{12}$ suggested
there. Amend $\lam_{12}$ by removing the chord $\ol{\frac12\frac34}$
and all its pullbacks. Then the amended geolaminations $\lam^a_{12}$
remains Thurston invariant, however it is no loner sibling
$\si_2$-invariant. Indeed, a leaf $\ol{0\frac14}\in \lam^1_{12}$ does
not have a \emph{disjoint} sibling leaf as the only sibling leaf it has
is the leaf $\ol{\frac14\frac12}$ which is non-disjoint from
$\ol{0\frac14}$.

Let us now discuss gaps in the context of $\si_d$-invariant
(geo)laminations.

\begin{dfn}[Periodic and (pre)periodic gaps]\label{d:gaps-i}
Let $G$ be a gap of an invariant geolamination $\lam$.
If the map $\si_d$ restricted on $G'$ extends to $\bd(G)$ as a
composition of a monotone map and a covering map of some degree $m$,
then $m$ is called the \emph{degree} of $\si_d|_G$. A gap/leaf $U$ of
$\lam_\sim$ is said to be \emph{{\rm(}pre{\rm)}periodic} of period $k$
if $\si_d^{m+k}(U')=\si_d^m(U')$ for some $m\ge 0$, $k>0$; if $m, k$
are chosen to be minimal, then if $m>0$, $U$ is called
\emph{preperiodic}, and, if $m=0$, then $U$ is called \emph{periodic
$($of period $k)$}. If the period of $G$ is $1$, then $G$ is said to be
\emph{invariant}. We define \emph{precritical} and
\emph{{\rm(}pre{\rm)}critical} objects similarly to how (pre)periodic
and preperiodic objects are defined above.
\end{dfn}

A more refined series of definitions deals with infinite periodic
gaps of sibling $\si_d$-invariant (geo)laminations. There are three
types of such gaps: \emph{caterpillar} gaps, \emph{Siegel} gaps, and
\emph{Fatou gaps of degree greater than one}. We define them below.
Observe that by \cite{kiw02} infinite gaps eventually map onto
periodic infinite gaps. First we state (without a proof) a very-well
known folklore lemma about edges of preperiodic (in particular,
infinite) gaps.

\begin{lem}\label{l:edges} Any edge of a (pre)periodic gap is either
(pre)periodic or (pre)critical.
\end{lem}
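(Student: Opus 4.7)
The plan is to reduce to the case of a periodic gap and then handle the finite and infinite subcases separately.

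For the reduction, I would proceed as follows. Suppose $G$ has preperiod $m$ and period $k$, so $\si_d^m(G)$ is periodic. By gap invariance (Definition~\ref{d:geolaminv}(2)), $\si_d$ sends each edge of a gap either to a point or to an edge of the image gap. Iterating, either $\si_d^j(\ell)$ becomes degenerate for some $j \le m$ (so $\ell$ is (pre)critical), or $\si_d^m(\ell)$ is a non-degenerate edge of the periodic gap $\si_d^m(G)$, in which case the conclusion for $\si_d^m(\ell)$ transfers back to $\ell$. Hence I may assume $G$ is periodic of period $k$, with each $\ell_j := \si_d^{jk}(\ell)$ a non-degenerate edge of $G$.

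For the finite subcase, $G'$ is finite and invariance $\si_d^k(G)=G$ forces $\si_d^k(G') = G'$. A surjection of a finite set to itself is a bijection, hence a permutation of $G'$ that also permutes the finitely many edges of $G$, so $\ell$ is periodic.

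For the infinite subcase I would argue by contradiction. Let $H_j \subset \uc$ be the hole of $G$ bounded by $\ell_j$, of arc length $\mu_j$. Since the holes of $G$ are pairwise disjoint open arcs, $\sum_j \mu_j \le 1$, and if the $\ell_j$ were all distinct then $\mu_j \to 0$. I would pick $J$ with $\mu_j < d^{-k}$ for every $j \ge J$; then $\si_d^k$ is injective on each such $H_j$ and sends it to an arc of length $d^k \mu_j$. Orientation preservation of $\si_d^k$ on $\uc$, combined with $\si_d^k(G)=G$, identifies this image arc with $H_{j+1}$: both are the unique arc of $\uc$ between $\si_d^k(a_j)$ and $\si_d^k(b_j)$ lying on the side of $\ell_{j+1}$ opposite to the interior of $G$. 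Hence $\mu_{j+1} = d^k \mu_j$ for all $j \ge J$, and iteration gives $\mu_{J+n} = d^{nk} \mu_J$, while the bound $\mu_{J+n} < d^{-k}$ forces $\mu_J < d^{-(n+1)k}$ for every $n$, contradicting $\mu_J > 0$.

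The hardest step will be justifying $\si_d^k(H_j) = H_{j+1}$: both arcs have the same endpoints, but showing that they lie on the same side of $\ell_{j+1}$ needs a careful use of orientation preservation and the invariance of $G$.
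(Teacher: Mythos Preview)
The paper states Lemma~\ref{l:edges} without proof, describing it as ``a very-well known folklore lemma,'' so there is nothing to compare against. Your argument is correct and is essentially the standard one.

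The reduction to a periodic gap and the finite subcase are routine. In the infinite subcase, the step you flag as hardest---that $\si_d^k(H_j)=H_{j+1}$ once $\mu_j<d^{-k}$---is indeed the only nontrivial point, and the justification you outline goes through. Gap invariance (Definition~\ref{d:geolaminv}(2)) says $\si_d^k$ acts on $\bd(G)$ as a positively oriented composition of a monotone map and a covering map; hence if the hole behind $\ell_j=\ol{a_jb_j}$ is the positively oriented arc $(a_j,b_j)$, then the hole behind $\ell_{j+1}$ must be the positively oriented arc $(\si_d^k(a_j),\si_d^k(b_j))$. Since $\mu_j<d^{-k}$ makes $\si_d^k$ injective and orientation-preserving on $(a_j,b_j)$, its image is exactly that arc. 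This is the gap analogue of the ``holes map to holes'' reformulation of (D2) recorded after Definition~\ref{d:si-inv-lam}. With that in hand, $\mu_{j+1}=d^k\mu_j$ for all $j\ge J$ contradicts $\mu_j\to 0$, as you say.
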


Let us now classify infinite gaps.

\begin{dfn}\label{d:caterpillar} An infinite  gap $G$ is said
to be a \emph{caterpillar} gap if its basis $G'$ is countable
(see Figure~6).
\end{dfn}

As as an example, consider a periodic gap $Q$ such that:
\begin{itemize}
 \item  The boundary of $Q$ consists of a periodic leaf
     $\ell_0=\ol{xy}$ of period $k$, a critical leaf
     $\ell_{-1}=\ol{yz}$ concatenated to it, and a countable
     concatenation of leaves $\ell_{-n}$ accumulating at $x$ (the
     leaf $\ell_{-r-1}$ is concatenated to the leaf $\ell_{-r}$,
     for every $r=1$, 2, $\dots$).
\item We have $\si^k(x)=x$, $\si^k(\{y, z\})=\{y\}$, and $\si^k$
    maps each $\ell_{-r-1}$ to $\ell_{-r}$ (all leaves are shifted
    by one towards $\ell_0$ except for $\ell_0$, which maps to
    itself, and $\ell_{-1}$, which collapses to the point $y$).
\end{itemize}

The description of $\si_3$-invariant caterpillar gaps is in
\cite{bopt13}. In general, the fact that the basis $G'$ of a
caterpillar gap $G$ is countable implies that there are lots of
concatenated edges of $G$. Other properties of caterpillar gaps can
be found in Lemma~\ref{l:cater}.

\begin{lem}\label{l:cater}
Let $G$ be a caterpillar gap of period $k$. Then the degree of
$\si_d^k|_G$ is one and $G'$ contains some periodic points.
\end{lem}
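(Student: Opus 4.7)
The plan is to analyze the extension $F$ of $\si_d^k|_{G'}$ to the topological circle $\partial G$; by definition of degree, $F$ factors as a monotone map composed with a covering map of some degree $m$. I would first rule out $m\ge 2$ to establish the first assertion, and then use $m=1$ together with rotation-number theory on $\partial G$ to produce a periodic point in $G'$.

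For the first assertion I argue by contradiction: assume $m\ge 2$. Since $G'$ is closed in $\uc$, its complement is a countable union of open arcs, so $G$ has at most countably many pairwise disjoint edges on $\partial G$. Collapsing each edge to a point gives a quotient $\pi\colon\partial G\to C$, where $C$ is again a topological circle (the standard quotient argument for countably many pairwise disjoint closed arcs in a circle). Because $F$ carries every edge either to another edge or, if the edge is critical, to a single point of $G'$, it descends to a continuous degree-$m$ self-map $\overline{F}$ of $C$. The restriction $\pi|_{G'}$ is surjective (each collapsed point of $C$ is hit by the endpoints of the corresponding edge, while the remaining points of $C$ have unique preimages in $G'$) and at most $2$-to-$1$, so $G'$ surjects onto $C$ with finite fibers. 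This forces $G'$ to be uncountable, contradicting the defining property of a caterpillar gap. Thus $m=1$.

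With $m=1$ established, $F$ is a monotone degree-one self-map of $\partial G$ whose rotation number $\rho$ is well defined, and the induced map $\overline{F}$ on $C$ is in fact a homeomorphism (the only arcs $F$ can collapse are critical edges, all of which have been absorbed by $\pi$). If $\rho$ were irrational, every $\overline{F}$-orbit would have uncountable closure in $C$ (the Denjoy minimal set or all of $C$); taking any $p\in G'$ and using that $\pi(\,\overline{\{F^n(p)\}}\,)=\overline{\{\overline{F}^n(\pi(p))\}}$ is uncountable while $\pi|_{G'}$ has fibers of size at most two, the orbit closure of $p$ in $G'$ would be uncountable, again contradicting countability. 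Hence $\rho$ is rational and $F$ has periodic points on $\partial G$.

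To finish, let $q$ be a periodic point of $F$ of some period $p$. If $q\in G'$ we are done; otherwise $q$ lies in the interior of an edge $\ell$, and since edges map to edges or to points we get $F^p(\ell)=\ell$. The restriction $F^p|_\ell$ is the linear map induced by $\si_d^{pk}$, which is orientation-preserving and fixes the interior point $q$; the only such affine self-map of an interval is the identity, so the two endpoints of $\ell$ lie in $G'$ and are fixed by $F^p$, furnishing periodic points of $F$ in $G'$. The main obstacle I anticipate lies in verifying cleanly that the quotient $\partial G\to C$ is a circle and that $\overline{F}$ inherits degree $m$; once these topological facts are nailed down, the rest of the argument reduces to classical circle-map dynamics together with the linearity of $\si_d$ on edges.
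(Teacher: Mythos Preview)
Your quotient construction is exactly where the argument breaks. You invoke ``the standard quotient argument for countably many pairwise disjoint closed arcs in a circle,'' but the edges of a caterpillar gap are \emph{not} pairwise disjoint. Since $G'$ is a countable compact subset of $\uc$, it is scattered (Cantor--Bendixson), so it has isolated points; every isolated point of $G'$ is the common endpoint of two adjacent edges. Collapsing each edge to a point therefore forces entire concatenations of edges to collapse to a single point. In the prototypical caterpillar gap described just before the lemma, the whole boundary is one such concatenation together with a single limit vertex, and your quotient $C$ degenerates to a point, not a circle. Once $C$ fails to be a circle, the degree argument for $m\ge 2$ and the rotation-number/Denjoy argument for $m=1$ both lose their footing: there is no circle on which to read off degree or rotation number, and the ``$\pi|_{G'}$ is at most $2$-to-$1$ onto an uncountable set'' step evaporates.

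The paper's proof avoids this by running the argument in the opposite direction. Rather than \emph{constructing} a quotient circle by collapsing edges, it invokes the general fact that a degree-$r$ (resp.\ degree-$1$ aperiodic) map of a circle is monotonically semiconjugate to $\si_r$ (resp.\ to an irrational rotation) via some $\psi:\bd(G)\to\uc$, and then shows, using Lemma~\ref{l:edges}, that every edge must lie in a fiber of $\psi$. The ``basic set'' $B$ of points not interior to a collapsed arc is automatically uncountable (it surjects at most $2$-to-$1$ onto $\uc$), and since edges are collapsed one gets $B\subset G'$, contradicting countability. The key difference is that $\psi$ is produced by dynamics, not by declaring edges to be fibers; this sidesteps the adjacency problem entirely. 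Your final paragraph (extracting a periodic vertex from a periodic edge) is fine, but the route to the existence of a periodic point needs to be rebuilt along the paper's lines.
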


\begin{proof}
We may assume that $k=1$. Consider $\si_d|_{\bd(G)}$. It is
well-known that if the degree $r$ of $\si_d|_{\bd(G)}$ is greater
than one, then there is a monotone map $\psi:\bd(G)\to \uc$ that
semiconjugates $\si_d|_{\bd(G)}$ and $\si_r|_{\uc}$ (see, e.g.,
\cite{blo86, blo87a, blo87b} where a similar claim is proven for
``graph'' maps). Take the set $B$ all points of $\bd(G)$ that do
not belong to open segments in $\bd(G)$, on which $\psi$ is
constant (such sets are said to be \emph{basic} in \cite{blo86,
blo87a, blo87b}).

Edges of $G$ must be collapsed to points under $\psi$ because otherwise
their $\psi$-images would have an eventual $\si_d$-image covering the
whole $\uc$ while by Lemma~\ref{l:edges} any edge of $G$ eventually
maps to a point or to a periodic edge of $G$ and cannot have the image
that is so big. Since $B$ is clearly
uncountable, we get a contradiction.

If now the degree of $\si_d|_{\bd(G)}$ is one, then it is well known
\cite{ak79, blo84} that either (1)  $\si_d|_{\bd(G)}$ is
monotonically semiconjugate to an irrational rotation by a map
$\psi$, or (2) $\si_d|_{\bd(G)}$ has periodic points. Take the set
$B$ of all points of $\bd(G)$ that do not belong to open segments
in $\bd(G)$, on which $\psi$ is a constant. If case (1), then,
similarly to the above, the edges of $G$ must be collapsed to points
under $\psi$ because otherwise there would exist a finite union of
their $\psi$-images covering the whole $\uc$ while by
Lemma~\ref{l:edges} any edge of $G$ eventually maps to a point or to
a periodic edge of $G$. Hence $B$ is uncountable contradicting the
definition of a caterpillar gap. Thus, (2) holds.
\end{proof}

\begin{dfn}\label{d:siegel}
A periodic Fatou gap $G$ of period $n$ is said to be a periodic
\emph{Siegel} gap if the degree of $\si_d^n|_G$ is $1$ and the basis
$G'$ of $G$ is uncountable.
\end{dfn}

The next lemma is well-known, a part of it  was actually proven in the proof of
Lemma~\ref{l:cater}.

\begin{lem}\label{l:siegel}
Let $G$ be a Siegel gap of period $n$. Then the map $\si_d^n|_{\bd(G)}$
is monotonically semiconjugate to an irrational circle rotation and
contains no periodic points. A periodic Siegel gap must have at least
one image that has a critical edge.
\end{lem}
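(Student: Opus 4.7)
My plan is to mirror the structure of the proof of Lemma~\ref{l:cater}, with the two cases of the dichotomy interchanged, and then use the first conclusion to deduce the statement about critical edges via Lemma~\ref{l:edges}.

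For the first part, I would apply to $\si_d^n|_{\bd(G)}$ the dichotomy for degree-$1$ circle-like maps used in the proof of Lemma~\ref{l:cater}: either (1) $\si_d^n|_{\bd(G)}$ is monotonically semiconjugate via some $\psi$ to an irrational rotation of $\uc$, or (2) it has periodic points. In case (1) both conclusions are immediate, since any periodic point of $\si_d^n|_{\bd(G)}$ would descend under $\psi$ to a periodic point of the irrational rotation, which has none. To rule out case (2), suppose it holds with some periodic point of period $k$, set $m=nk$, and let $F$ be the closed set of fixed points of $\si_d^m|_{\bd(G)}$. Points of $F\cap G'$ are periodic points of $\si_d$ on $\uc$ of period dividing $m$, so $F\cap G'$ is finite. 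The structure theory for degree-$1$ circle maps (the same results invoked in the proof of Lemma~\ref{l:cater}) tells me that each orbit in $\bd(G)\setminus F$ converges monotonically, inside $\bd(G)$, to a point of $F$ under iteration of $\si_d^m$. However, for any $x\in G'\setminus F$, the orbit $\{\si_d^{mj}(x)\}_{j\ge 0}$ lies in $\uc$, where $\si_d^m$ is expanding and has only repelling periodic points; such an orbit cannot converge in $\uc$ to any point. Since $G'$ is uncountable and $F\cap G'$ is finite, such $x$ exists, producing the contradiction.

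For the second part, I would argue by contradiction. Assume no image $\si_d^i(G)$ has a critical edge. Each $\si_d^i(G)$ is again a periodic Fatou gap of period $n$ on which $\si_d^n$ has degree $1$ and whose basis is uncountable, hence each such gap is itself Siegel. Lemma~\ref{l:edges} says every edge of $G$ is (pre)periodic or (pre)critical; since no edge in the forward orbit of any edge of $G$ is critical, every edge of $G$ must be (pre)periodic. Therefore, for some edge $e$ of $G$ and some $j\ge 0$, the leaf $\si_d^j(e)$ is a genuinely periodic non-degenerate edge of the Siegel gap $\si_d^j(G)$, and the endpoints of $\si_d^j(e)$ are periodic for $\si_d$. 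These endpoints lie in $(\si_d^j(G))'\subset\bd(\si_d^j(G))$, hence are periodic points of $\si_d^n|_{\bd(\si_d^j(G))}$, contradicting the first part applied to $\si_d^j(G)$.

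The main obstacle will be the expansion-vs.-convergence step in case (2) of Part~1: I need to combine the monotone convergence of $\si_d^m$-orbits in the internal topology of $\bd(G)$ with the fact that $\si_d^m$ on $\uc$ has only repelling periodic points, observing that the topology on $\bd(G)$ restricts to the standard topology on $G'\subset\uc$, so ``convergence inside $\bd(G)$'' of points of $G'$ really is convergence in $\uc$. Once this is in place, Part~2 is a short application of Lemma~\ref{l:edges}.
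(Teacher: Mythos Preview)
Your proposal is correct. The paper does not actually supply a proof of this lemma: it calls the result well-known and notes that part of it (namely the degree-one dichotomy between monotone semiconjugacy to an irrational rotation and existence of periodic points) already appeared in the proof of Lemma~\ref{l:cater}. Your argument fills in exactly what the paper leaves implicit---ruling out the periodic-point alternative by playing the convergence of orbits to periodic points (from rotation-number theory for monotone degree-one maps on $\bd(G)$) against the fact that $\si_d$-periodic points on $\uc$ are repelling---and your Part~2 via Lemma~\ref{l:edges} is the natural deduction. One minor tightening: in Part~1 choose $x\in G'$ not merely outside $F$ but not (pre)periodic under $\si_d$ at all (possible since $G'$ is uncountable while preperiodic points of $\si_d$ are countable), so that the forward orbit of $x$ never lands in $F$ and must genuinely converge in $\uc$ to a repelling fixed point, giving the contradiction cleanly.
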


The following definition completes our series of definitions.

\begin{dfn}\label{d:fatou}
A periodic \emph{Fatou gap is of degree $k>1$} if the degree of
$\si_d^n|_{\bd(G)}$ is $k>1$. If the degree of a Fatou gap $G$ is $2$,
then $G$ is said to be \emph{quadratic}.
\end{dfn}

The next lemma is well-known.

\begin{lem}\label{l:fatou}
Let $G$ be a Fatou gap of period $n$ and of degree $k>1$. Then the map
$\si_d^n|_{\bd(G)}$ is monotonically semiconjugate to $\si_k$.
\end{lem}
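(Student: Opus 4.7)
The plan is to reduce the statement to a standard fact about circle endomorphisms of degree greater than one, exactly along the lines of the proof of Lemma~\ref{l:cater}. First I would pass from $\si_d$ to $\si_d^n$ and assume $G$ is invariant (so $k$ is the degree of $\si_d|_{\bd(G)}$). By gap invariance (Definition~\ref{d:geolaminv}(2)), $\si_d|_{\bd(G)}$ is a positively oriented composition of a monotone map and a covering map of degree $k>1$, so in particular it is a continuous degree-$k$ self-map of the Jordan curve $\bd(G)$.

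Next, I would parametrize $\bd(G)$ as a topological circle using the positive cyclic order on its vertices inherited from $\uc$, traversing edges in the orientation they carry as parts of $\bd(G)$. Under this identification $\si_d|_{\bd(G)}$ becomes a continuous degree-$k$ self-map of $\uc$. I would then invoke the classical result (the same tools cited in the proof of Lemma~\ref{l:cater}, namely \cite{ak79, blo84} together with the ``basic set'' framework of \cite{blo86, blo87a, blo87b}) that every continuous degree-$k$ self-map of $\uc$ with $k>1$ admits a monotone semiconjugacy to $\si_k$. This produces a monotone map $\psi:\bd(G)\to\uc$ with $\psi\circ\si_d=\si_k\circ\psi$.

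Finally, mirroring the caterpillar argument, I would check that $\psi$ collapses every edge $\ell$ of $G$ to a single point. By Lemma~\ref{l:edges} each edge is (pre)critical or (pre)periodic, so its forward orbit under $\si_d$ consists of only finitely many edges of $G$; if $\psi(\ell)$ were a non-degenerate arc, then its forward iterates under $\si_k$ would cover $\uc$ (since $\si_k$ is expanding for $k>1$), contradicting the fact that these iterates must lie in the $\psi$-image of a finite orbit of edges. Hence $\psi$ is constant on each edge, so the construction does not depend on the choice of parametrization across edges and the resulting $\psi$ is genuinely a monotone map on $\bd(G)$ in the sense required.

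The main obstacle is correctly invoking the degree-$k$ semiconjugacy result, since $\si_d|_{\bd(G)}$ is only a composition of a monotone map with a covering and need not be a local homeomorphism at vertices of collapsed edges. However, this is precisely the setting in which the cited basic-set theory applies: one first collapses the non-trivial fibers of the monotone factor to pass to a genuine covering of $\uc$, and then uses the standard fact that any degree-$k$ covering of $\uc$ with $k>1$ is semiconjugate (in fact conjugate) to $\si_k$. Composing these two steps yields the required $\psi$.
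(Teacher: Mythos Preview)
The paper does not actually prove this lemma: it is stated as ``well-known'' and no argument is given. Your sketch is correct and is exactly the standard argument one would expect---collapse the monotone factor of $\si_d^n|_{\bd(G)}$ to obtain a genuine degree-$k$ covering of the circle, then use the classical fact that any such covering is (semi)conjugate to $\si_k$, and finally observe via Lemma~\ref{l:edges} that edges of $G$ have finite forward orbits and hence must be collapsed by the semiconjugacy. This is the same machinery the paper invokes (with the same references) in the proof of Lemma~\ref{l:cater}, so your approach is entirely in keeping with the paper's methods.
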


\section{Limit geolaminations and their properties}\label{s:limitg}

In this section we study properties of limits of $\si_d$-invariant
q-geolaminations (as explained above, convergence of geolaminations is
understood as convergence of their laminational solids in the Hausdorff metric).
Fix the degree
$d$. We prove a few lemmas, in which we assume that a sequence of
$\si_d$-invariant q-geolaminations $\lam_i$ converges to a sibling
$\si_d$-invariant geolamination $\lam_\infty$. By an \emph{(open)
strip} we mean a part of the unit disk contained between two disjoint
chords. By an \emph{(open) strip around a chord $\ell$} we mean a strip
containing $\ell$. In what follows when talking about convergence of
leaves/gaps, closeness of leaves/gap, and closures of families of
geolaminations we always mean this in the Hausdorff metric on the space
of their laminational solids.

\begin{dfn}\label{d:closuq}
Let $\laq_d$ be the family of all $\si_d$-invariant q-geolaminations.
Let $\ol{\laq_d}$ be the closure of $\laq_d$ in the compact space of all subcontinua of
$\ol{\disk}$ with the Hausdorff metric: we
take the closure of the family of laminational solids of geolaminations
from $\laq$, for each limit continuum consider the corresponding
geolamination, and denote the family of all such geolaminations by
$\ol{\laq_d}$.
\end{dfn}

Even though we will prove below a few general results, we mostly
concentrate upon studying periodic objects of limits of
$\si_d$-invariant q-geolaminations.

\begin{lem}\label{l:limleaf1}
Let $\lam\in \ol{\laq_d}$, $\ell=\ol{ab}$ be a periodic leaf of $\lam$.
If $\hlam\in \laq_d$ is sufficiently close to $\lam$, then any leaf of
any $\hlam$ sufficiently close to $\ell$ is either equal to $\ell$ or
disjoint from $\ell$. Moreover, if $\lam_i\to \lam, \lam_i\in \laq_d$,
then for any $\e>0$ there is $N=N(\e)$ such that any leaf of $\lam_i$
$(i>N)$ is disjoint from $\ell$ or intersects $\ell$ at a point $z$
that is $\e$-close to $\{a,b\}$.
\end{lem}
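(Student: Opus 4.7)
\emph{Strategy and setup.} The plan is dynamical. Let $n$ be the period of $\ell$; replacing $n$ by $2n$ if $\si_d^n$ swaps $a$ and $b$, I may assume $\si := \si_d^n$ fixes both endpoints, which are then repelling fixed points of $\si$ with multiplier $d^n>1$. I choose small open arcs $U_a \ni a$, $U_b \ni b$ on which $\si$ is injective, orientation-preserving, carries each sub-arc on either side of $\ell$ strictly into itself, and contains no $\si^2$-fixed point other than $a$ (resp.\ $b$). For $m \in \hat\lam$ Hausdorff-close enough to $\ell$, its endpoints lie in $U_a\cup U_b$ and (by taking $m$ yet closer) their first few iterates under $\si$ remain in these arcs. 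By Theorem~\ref{t:siblinv}, the forward iterates $m_k := \si^k(m)$ belong to $\hat\lam$ for every $k\ge 0$.

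\emph{First assertion, no shared endpoint.} Write $m = \overline{ce}$ with $c \in U_a$, $e \in U_b$, $m \ne \ell$, and suppose first that $c \ne a$ and $e \ne b$. If $m$ does not cross $\ell$, then $m$ is disjoint from $\ell$ and we are done. Otherwise $c,e$ lie in opposite components of $\uc\setminus\{a,b\}$; say $c\in(a,b)$ and $e\in(b,a)$. Local expansion at the fixed points places $c_1=\si(c)\in(a,b)$ strictly between $c$ and $b$, and $e_1=\si(e)\in(b,a)$ strictly between $e$ and $a$. Thus the cyclic order on $\uc$ is $a,c,c_1,b,e,e_1$, so $c_1$ and $e_1$ lie on opposite sides of the chord $m$; hence $m_1$ crosses $m$, contradicting that $\hat\lam$ is a geolamination.

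\emph{First assertion, shared endpoint (main obstacle).} Suppose instead $c=a$ and $e\in U_b\setminus\{b\}$. Then $m_k=\overline{a\,e_k}$ with $e_k:=\si^k(e)\in U_b$. Because $b$ is the only $\si^2$-fixed point in $U_b$ and $\si$ is injective there, the points $e_0,e_1,e_2$ are pairwise distinct: otherwise some $\si^j(e)$ with $j\le 2$ would be a $\si^2$-fixed point in $U_b$, forcing $\si^j(e)=b$, and then $e=b$ by injectivity. Thus $m_0,m_1,m_2$ are three distinct non-degenerate leaves of $\hat\lam$ incident to the vertex $a$. But $\hat\lam=\lam_{\hat\sim}$ is a q-geolamination: $a$ lies in a unique $\hat\sim$-class $A$ and is incident to at most two edges of $\ch(A)$ — contradiction. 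The case $e=b$ is symmetric. This shared-endpoint case is the real hurdle: the crossing argument of the preceding paragraph breaks down, and the resolution relies crucially on $\hat\lam$ being a \emph{q-}geolamination (rather than merely sibling invariant), via the ``at most two leaves per vertex'' property.

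\emph{Moreover.} Fix $\e>0$ and suppose the conclusion fails along a subsequence: leaves $m_{i_k}\in\lam_{i_k}$ meet $\ell$ at $z_{i_k}$ with $d(z_{i_k},\{a,b\})\ge\e$. Since the intersection of a chord with $\ell$ depends continuously on the chord and tends to $a$ (resp.\ $b$) whenever an endpoint of the chord tends to $a$ (resp.\ $b$), the endpoints of $m_{i_k}$ are bounded away from $\{a,b\}$. A Hausdorff-convergent subsequence yields a nondegenerate chord $m_\infty$ with endpoints in opposite arcs of $\uc\setminus\{a,b\}$, bounded away from $\{a,b\}$, crossing $\ell$ at an interior point. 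Since Hausdorff limits of leaves along $\lam_i\to\lam$ are leaves of $\lam$ in the sibling invariant setting (Theorem~\ref{t:siblinv}), $m_\infty\in\lam$ — but then $m_\infty$ and $\ell$ are two distinct leaves of $\lam$ that cross, contradicting that $\lam$ is a geolamination.
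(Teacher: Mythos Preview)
Your crossing argument for the first assertion (no shared endpoint) is exactly the paper's: if $\hell$ is close to $\ell$ and crosses it, then $\si_d^n(\hell)$ crosses $\hell$. Your shared-endpoint case is correct but more elaborate than the paper's one-liner: in a q-geolamination a leaf through the periodic point $a$ must have its other endpoint periodic of the \emph{same} period as $a$ (this is the content of Lemma~\ref{l:finicon}, and for q-geolaminations it is immediate from invariance of the class $[a]_{\hat\sim}$); since $n$-periodic points are isolated, that endpoint cannot be near $b$ unless it equals $b$. Your three-leaves-at-a-vertex argument is a valid alternative that uses only the convex-hull structure of q-geolaminations.

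For the ``Moreover'' clause your route genuinely differs from the paper's, and there is one slip to patch. The paper stays inside $\lam_i$: it picks a leaf $\ell_i\in\lam_i$ much closer to $\ell$ than $\e$, and observes that an offending $\hell_i$ --- crossing $\ell$ well away from the endpoints yet unlinked with $\ell_i$ --- is thereby forced to be close to $\ell$, contradicting the first assertion. Your compactness approach (pass to a limit chord $m_\infty$, which must be a leaf of $\lam$ crossing $\ell$) also works, but your justification that the endpoints of $m_{i_k}$ are bounded away from $\{a,b\}$ is incomplete: the assertion ``the intersection tends to $a$ whenever an endpoint tends to $a$'' fails precisely when the \emph{other} endpoint simultaneously tends to $b$ --- then $m_{i_k}\to\ell$ while the crossing point can lie anywhere on $\ell$. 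That residual case is ruled out exactly by your first assertion (a leaf of $\lam_{i_k}$ close to $\ell$ is $\ell$ or disjoint from it), so invoke it explicitly; once you do, the subsequential limit $m_\infty$ really has endpoints bounded away from $\{a,b\}$ and your contradiction goes through. The paper's route has the advantage of reducing directly to the first part without appealing to the (true, but not stated in Theorem~\ref{t:siblinv} as such) fact that Hausdorff limits of leaves along $\lam_i\to\lam$ are leaves of $\lam$.
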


\begin{proof}
If a leaf $\hell\ne \ell$ is a leaf of a q-geolamination that is
very close to $\ell$ and non-disjoint from
$\ell$, then it must cross $\ell$ (if $\hell$ shares an endpoint with
$\ell$, then the other endpoint of $\hell$ must be periodic of the same
period as $a$ and $b$ and hence $\hell$ cannot be close to $\ell$).
However, then it would follow that $\si_d^n(\hell)$ crosses $\hell$ (here
$n$ is such that $\si_d^n(a)=a, \si_d^n(b)=b$), a contradiction. This
proves the first claim.

Now, let $\lam_i\to \lam, \lam_i\in \laq_d$. Choose $N=N(\e)$ so
that each geolamination $\lam_i, i>N$ has a leaf $\ell_i$ that is
much closer to $\ell$ than $\e$. If $\lam_i$ has a leaf $\hell_i\ne
\ell$ intersecting $\ell$ at a point $z$ with $\mathrm{d}(z,
\{a, b\})\ge \e$, then, since $\hell_i$ does not cross $\ell_i$, the
leaf $\hell$ will be close to $\ell$, contradicting the above.
\end{proof}

Definition~\ref{d:rigileaf} introduces the
concept of rigidity.

\begin{dfn}\label{d:rigileaf} A leaf/gap $G$ of $\lam$ is \emph{rigid}
if any q-geolamination close to $\lam$ has $G$ as its leaf/gap.
\end{dfn}

Periodic leaves of geolaminations are either edges of gaps or
limits of other leaves; consider these two cases separately.

\begin{lem}\label{l:limleaf2}
Let $\lam\in \ol{\laq_d}$, and let $\ell=\ol{ab}$ be a periodic leaf of $\lam$
that is not an edge of a gap of $\lam$. Then $\ell$ is rigid.
\end{lem}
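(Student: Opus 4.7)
The plan is a proof by contradiction: suppose there is a sequence $\hlam_n \in \laq_d$ with $\hlam_n \to \lam$ and $\ell \notin \hlam_n$ for every $n$. I will construct a distinguished $\si_d^p$-invariant gap of $\hlam_n$ that shrinks to $\ell$, and then use the classification of periodic gaps of sibling invariant geolaminations to reach a contradiction.

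Let $p$ denote the period of $\ell$, so that $\si_d^p$ fixes each of $a, b$ and bijects $\ell$ onto itself. By the second part of Lemma~\ref{l:limleaf1}, for all large $n$ no leaf of $\hlam_n$ crosses $\ell$ outside a small neighborhood of $\{a, b\}$; hence the midpoint of $\ell$, together with the rest of the interior of $\ell$ away from $\{a, b\}$, lies in a single gap $\hat{G}_n$ of $\hlam_n$, and $a, b \in \bd(\hat{G}_n)$. Since $\si_d^p$ maps interior points of $\ell$ (which are in $\hat{G}_n$) again to interior points of $\ell$, we have $\si_d^p(\hat{G}_n) \cap \hat{G}_n \neq \emptyset$, whence $\si_d^p(\hat{G}_n) = \hat{G}_n$. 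The hypothesis that $\ell$ is not an edge of any gap of $\lam$ provides leaves of $\lam$ approaching $\ell$ from both sides; by the Hausdorff convergence $\hlam_n^+ \to \lam^+$ together with Lemma~\ref{l:limleaf1}, these are approximated by leaves of $\hlam_n$ disjoint from $\ell$ on the corresponding sides, which bound $\hat{G}_n$, so $\hat{G}_n \to \ell$ in Hausdorff as $n \to \infty$.

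By Theorem~\ref{t:siblinv}, $\hlam_n$ is sibling invariant, so the periodic gap $\hat{G}_n$ must be finite, a Siegel gap (Definition~\ref{d:siegel}), a caterpillar gap (Definition~\ref{d:caterpillar}), or a Fatou gap of degree $>1$ (Definition~\ref{d:fatou}). In each case I derive a contradiction using both $\hat{G}_n \to \ell$ and $a, b \in \bd(\hat{G}_n)$ being $\si_d^p$-fixed. For a finite gap, the vertices cluster in a shrinking neighborhood of $\{a, b\}$, and any vertex other than $a, b$ would have to be periodic of bounded period and arbitrarily close to $\{a, b\}$, impossible since periodic points of bounded period are discrete; the vertex set then reduces to $\{a, b\}$, whose unique edge is $\ell$, contradicting $\ell \notin \hlam_n$. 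A Siegel gap admits no periodic points on $\bd(\hat{G}_n)$ by Lemma~\ref{l:siegel}, contradicting $a, b \in \bd(\hat{G}_n)$. A caterpillar gap has a periodic ``spine'' leaf of period dividing $p$ close to $\ell$, but periodic chords of period dividing $p$ form a finite set, forcing the spine to equal $\ell$ for large $n$ and again contradicting $\ell \notin \hlam_n$. For a Fatou gap of degree $k>1$, Lemma~\ref{l:fatou} gives that $\si_d^p|_{\bd(\hat{G}_n)}$ has degree at least $k \geq 2$, so a point in $G_n' = \bd(\hat{G}_n) \cap \uc$ near $a$ has at least two $\si_d^p$-preimages in $G_n'$; however, since $a, b$ are $\si_d^p$-fixed, $b/a$ is a $(d^p - 1)$-th root of unity, and $\gcd(d^p, d^p - 1) = 1$ implies $b/a$ is not a $d^p$-th root of unity, so among the $d^p$ preimages on $\uc$ of a point near $a$, only the one near $a$ lies in the small neighborhood of $\{a, b\} \supseteq G_n'$, giving degree $1$ rather than $k$.

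The main obstacle is the Fatou case, resting on the algebraic observation that, because $b/a$ is a nontrivial $(d^p - 1)$-th root of unity and $\gcd(d^p, d^p - 1) = 1$, it is not a $d^p$-th root of unity; this prevents any $\si_d^p$-preimage of a point near $a$ from landing near $b$ and so kills the required covering degree. Secondary care must be taken to verify that $\hat{G}_n$ is a single gap containing the relevant portion of $\ell$'s interior (rather than several), and to ensure the classification applies to $\hat{G}_n$ when its minimal period $p_0$ properly divides $p$ — in that case the same arguments go through with $\si_d^{p_0}$-preimages, since all numerical constraints above used only that $\si_d^p$ fixes $a, b$.
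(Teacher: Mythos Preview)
Your argument is sound and takes a genuinely different route from the paper. The paper proceeds directly: having found leaves $\hell^l,\hell^r$ of $\hlam$ forming a narrow strip $\widehat S$ around $\ell$, it iterates the branch of $\si_d^{-n}$ fixing $a,b$ to produce pullback leaves of $\hell^l$ inside $\widehat S$ converging to $\ell$; closedness of $\hlam^+$ then forces $\ell\in\hlam$. You instead trap $\ell$ in a $\si_d^p$-invariant gap $\hat G_n$ of $\hlam_n$ that Hausdorff-converges to $\ell$ and eliminate each type of periodic gap. The paper's route is much shorter; yours is more structural and makes explicit use of the classification of periodic gaps, and your degree-$>1$ case gives a clean arithmetic obstruction ($b/a$ is a $(d^p-1)$-th root of unity but not a $d^p$-th one).

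Two points deserve tightening. First, the assertion $a,b\in\bd(\hat G_n)$ does not follow from Lemma~\ref{l:limleaf1} alone: that lemma still permits a short leaf of $\hlam_n$ with endpoints in opposite components of $\uc\setminus\{a,b\}$, both close to $a$, crossing $\ell$ there and cutting $a$ off from $\hat G_n$. You must observe that such a short chord is mapped by $\si_d^p$ to a strictly longer chord of the same kind (since $a$ is a repelling fixed point), producing linked leaves --- impossible. Once no leaf of $\hlam_n$ crosses $\ell$ at all, the whole of $\text{int}(\ell)$ lies in $\hat G_n$ and $a,b\in\hat G_n'$ follows. Second, the caterpillar case is superfluous here: the $\hlam_n$ are q-geolaminations, whose periodic infinite gaps are Siegel or of degree $>1$ (cf.\ the remarks on Fatou domains after Definition~\ref{d:si-inv-lam}). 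Your ``spine leaf'' claim is in fact true --- the edge of $\hat G_n$ incident to $a$ is forced to be $\si_d^p$-fixed by positive orientation and degree one on $\bd(\hat G_n)$ --- but is asserted rather than proved. The same orientation argument (a positively oriented permutation of a finite cyclically ordered set with two fixed points is the identity) also cleanly gives, in the finite case, that every vertex of $\hat G_n$ is $\si_d^p$-fixed, which is the right justification for your ``bounded period'' claim.
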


\begin{proof}
By the assumption, arbitrarily close to $\ell$ on either side of
$\ell$ there are leaves $\ell^l\ne \ell$ and $\ell^r\ne \ell$.
Observe that leaves $\ell^l$ and $\ell^r$ may share an endpoint with
$\ell$, still either leaf has at least one endpoint on the
appropriate side of $\ell$. Choose them very close to $\ell$. Now,
choosing a q-geolamination $\hlam\in \laq_d$ very close to $\lam$ we
may choose leaves $\hell^l\in \hlam$ and $\hell^r\in \hlam$ very
close to $\ell^l$ and $\ell^r$. Since $\ell^l$ and $\ell^r$ are very
close to $\ell$, by Lemma~\ref{l:limleaf1} the leaves $\hell^l,
\hell^r$ either coincide with $\ell$ or are disjoint from $\ell$.
Since $\ell^l\ne \ell$ and $\ell^r\ne \ell$, we have $\hell^l\ne \ell$
and $\hell^r\ne \ell$. Thus, the leaves $\hell^l$ and $\hell^r$ are
disjoint from $\ell$. This and the choice of the leaves $\ell^l$ and
$\ell^r$ implies that $\hell^l$ and $\hell^r$ are the edges of a
narrow strip $\widehat S$ around $\ell$. Choose $n$ so that
$\si^n_d(a)=a$ and $\si_d^n(b)=b$. Then $\hlam$ has a pullback-leaf
of $\hell^l$ inside $\widehat S$ whose endpoints are even closer to
$a$ and $b$. Repeating this, we see that $\ell$ is a leaf of
$\hlam$. Thus, $\ell$ is rigid.
\end{proof}

To study periodic edges of gaps we use Lemma~\ref{l:limgap1}, which
is straightforward and the proof of which is left to the reader.
Let $\lam\in \ol{\laq_d}$, and let $G$ be a gap of $\lam$. Then
for a geolamination $\hlam$ define $G(\hlam)$ as the gap of $\hlam$
with the area of $G(\hlam)\cap G$ greater than half\, the area of\, $G$
(if such a gap exists) or $\0$ otherwise. Clearly, $G(\hlam)$ is
well-defined for any $\hlam$.

\begin{lem}\label{l:limgap1}
Let $\lam\in \ol{\laq_d}$, and let $G$ be a gap of $\lam$. Then for any
geolamination $\hlam\in \laq_d$ close to $\lam$ the gap $G(\hlam)$ is
non-empty and such that $G(\hlam_i)\to G$ as $\hlam_i\to \lam$. Moreover,
if $\si_d^n(G)=G$ for some $n$, then $\si_d^n(G(\hlam))=G(\hlam)$ if
$\hlam$ is close to $\lam$.
\end{lem}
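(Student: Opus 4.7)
The plan is to exploit the Hausdorff convergence $\hlam^+\to \lam^+$ together with the positive planar area of gaps, and then to use convexity of gaps for the outer control. Uniqueness of $G(\hlam)$ is immediate from the definition, since two distinct gaps of $\hlam$ are disjoint and hence cannot both intersect $G$ in a set of area exceeding half the area of $G$. For existence, I fix a compact connected set $K\subset \inte(G)$ of area strictly greater than half the area of $G$. If $\hlam$ is sufficiently close to $\lam$, no leaf of $\hlam$ can meet $K$; otherwise, along a sequence $\hlam_i\to \lam$, a sequence of such offending leaves would subconverge in the Hausdorff metric to a chord of $\lam^+$ meeting $K\subset \inte(G)$, which is impossible since $\inte(G)\cap \lam^+=\0$. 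Thus $K$ is connected and disjoint from $\hlam^+$, so it lies in a single gap $G'$ of $\hlam$, and $G(\hlam)=G'$ is well-defined.

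For the Hausdorff convergence $G(\hlam_i)\to G$, I pass to a subsequence along which $G(\hlam_i)\to H$ for some continuum $H\subset \cdisk$. Choosing $K$ with area arbitrarily close to that of $G$ yields $H\supseteq \inte(G)$. For the reverse inclusion, suppose some $z\in H\sm \ol{G}$; pick $p\in \inte(G)$ and points $z_i\in G(\hlam_i)$ with $z_i\to z$. Since gaps are convex subsets of $\cdisk$, the segment from $p$ to $z_i$ lies entirely in $G(\hlam_i)$. For large $i$ this segment must cross some chord-edge $\ell$ of $G$ (a leaf of $\lam$); by Hausdorff convergence $\hlam_i^+\to \lam^+$, the geolamination $\hlam_i$ contains a chord arbitrarily close to $\ell$, which is then also crossed transversally by the segment, contradicting the fact that the segment lies in the single gap $G(\hlam_i)$ of $\hlam_i$. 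Hence $H\subset \ol{G}$, so $H=\ol{G}$, and subsequence-independence yields $G(\hlam_i)\to G$. I expect this outer-control argument to be the main obstacle: it is precisely here that convexity of gaps and the full Hausdorff convergence of laminational solids (rather than mere leaf-by-leaf convergence) are essential.

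Finally, suppose $\si_d^n(G)=G$. Let $\si^*_d$ denote the continuous extension of $\si_d$ to $\cdisk$ associated with $\hlam$. By gap invariance, $(\si^*_d)^n(G(\hlam))$ is a point, leaf, or gap of $\hlam$, and since $G(\hlam)$ is Hausdorff close to $G$ and $(\si^*_d)^n$ is continuous on $\cdisk$, its image is Hausdorff close to $(\si^*_d)^n(G)=G$. For $\hlam$ close enough to $\lam$ this image therefore has area exceeding half the area of $G$ inside $G$, and in particular must itself be a gap of $\hlam$. The uniqueness established in the first step then forces $(\si^*_d)^n(G(\hlam))=G(\hlam)$.
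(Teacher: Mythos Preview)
The paper does not supply a proof of this lemma; it is declared ``straightforward'' and left to the reader, so there is no argument in the text to compare against. Your existence and convergence arguments are sound: the compact $K\subset\inte(G)$ of large area handles existence cleanly, and the outer control via convexity of gaps together with the existence of leaves of $\hlam_i$ Hausdorff-close to any given edge $\ell$ of $G$ (which you can justify by taking leaves of $\hlam_i$ through points near the midpoint of $\ell$ and using that any subsequential limit must lie in $\lam^+$ and pass through an interior point of $\ell$, hence equal $\ell$) is correct. A small point: to guarantee the segment $[p,z_i]$ exits $G$ through the relative interior of an edge rather than through a vertex on $\uc$, choose $p$ generically.

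There is a genuine gap in the periodic step. You write $(\si_d^*)^n(G)=G$, where $\si_d^*$ is the barycentric extension associated with $\hlam$. But $G$ is a gap of $\lam$, not of $\hlam$; the map $\si_d^{*,\hlam}$ is defined piecewise on the gaps of $\hlam$, and $G$ will in general be sliced by leaves of $\hlam$ into pieces lying in several different $\hlam$-gaps, each mapped by a different barycentric formula. There is no reason for the resulting image to equal $G$, and you would need at least uniform convergence $\si_d^{*,\hlam}\to\si_d^{*,\lam}$ as $\hlam\to\lam$ to get that the image is \emph{close} to $G$, which is not immediate.

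The clean fix bypasses the extension entirely. Every gap is the convex hull of its basis (a compact convex subset of $\cdisk$ is the convex hull of its extreme points, and these all lie on $\uc$), so $G(\hlam)=\ch(G(\hlam)')$. From $G(\hlam)\to G$ one then deduces $G(\hlam)'\to G'$ in the Hausdorff metric on $\uc$: if some $x\in G'$ were not approximated by points of $G(\hlam)'$, then $G(\hlam)'$ would miss a fixed arc $U\ni x$, forcing $G(\hlam)\subset\ch(\uc\sm U)$ to stay bounded away from $x\in G$, contradicting $G(\hlam)\to G$. Continuity of $\si_d^n$ on $\uc$ then gives $\si_d^n(G(\hlam)')\to\si_d^n(G')=G'$, hence $\si_d^n(G(\hlam))=\ch(\si_d^n(G(\hlam)'))\to\ch(G')=G$. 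Thus the $\hlam$-image of $G(\hlam)$ is a gap of $\hlam$ Hausdorff-close to $G$, and your uniqueness step forces it to equal $G(\hlam)$ as intended.
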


A periodic leaf of a geolamination $\lam\in \ol{\laq_d}$ that is an
edge of a gap has specific properties.

\begin{lem}\label{l:limgap1.5}
Let $\lam\in \ol{\laq_d}$, let $G$ be a gap of $\lam$, and let $\ell$ be a
periodic edge of $G$. Then for all geolaminations $\hlam\in \laq_d$
close to $\lam$ their gaps $G(\hlam)$ are such that either $\ell$ is an
edge of $G(\hlam)$, or $\ell$ intersects the interior of $G(\hlam)$ and
$G(\hlam)$ has an edge close to $\ell$. Moreover, the following holds:

\begin{enumerate}

\item If $G$ is periodic of period $n$, then $\ell$ must be an edge
    of $G(\hlam)$ and either $G$ is finite, or the degree of
    $\si_d^n|_{\bd(G)}$ is greater than one.

\item If $\ell$ is a common edge of gaps $G, H$ of $\lam$ then one
    of $G, H$ is a Fatou gap of degree greater than one and the
    other one is either a finite periodic gap, or a Fatou gap of
    degree greater than one.

\end{enumerate}

\end{lem}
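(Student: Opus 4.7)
For the main dichotomy, fix $\hlam\in\laq_d$ sufficiently close to $\lam$. By Lemma~\ref{l:limgap1}, $G(\hlam)$ is a nonempty gap with $G(\hlam)\to G$ in Hausdorff metric. By Lemma~\ref{l:limleaf1}, every leaf of $\hlam$ close to $\ell$ either equals $\ell$ or is disjoint from it, so no leaf of $\hlam$ crosses $\ell$. Hence $\ell$ lies in the closure of exactly one gap of $\hlam$. Since $\ell\subset G$ and $G(\hlam)$ is the unique gap of $\hlam$ whose overlap with $G$ exceeds $|G|/2$ in area, this gap must be $G(\hlam)$. If $\ell$ is itself a leaf of $\hlam$, it is an edge of $G(\hlam)$; otherwise $\ell$ lies in the interior of $G(\hlam)$, and Hausdorff convergence $G(\hlam)\to G$ together with $\ell\subset\bd(G)$ forces a boundary leaf of $G(\hlam)$, on the side of $\ell$ opposite the interior of $G$, to converge to $\ell$.

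For part (1), I would assume $G$ has period $n$, so $G(\hlam)$ also has period $n$ by Lemma~\ref{l:limgap1}. Suppose for contradiction that $\ell$ lies in the interior of $G(\hlam)$. The endpoints $a, b$ of $\ell$ are then periodic points of $\si_d^n$ lying in the basis $G(\hlam)'$. A case analysis on the type of $G(\hlam)$ via Lemmas~\ref{l:cater}, \ref{l:siegel}, and~\ref{l:fatou} yields contradictions in each case: if $G(\hlam)$ is finite, $\si_d^n$ acts on its vertices as a cyclic permutation, and fixing two distinct vertices forces the permutation to be the identity, so the diagonal $\ell$ would be a chord inside a single $\sim$-class's convex hull that is simultaneously a leaf of the limit $\lam$, an incompatibility; Siegel is excluded by Lemma~\ref{l:siegel} (no periodic boundary points); caterpillar and Fatou-of-degree-${>}1$ cases are ruled out using the semiconjugacies in Lemmas~\ref{l:cater} and~\ref{l:fatou} to forbid an interior chord connecting two periodic boundary vertices. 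For the second half of (1), $G$ itself cannot be Siegel by Lemma~\ref{l:siegel}, and the caterpillar case is ruled out by a rigidity argument: any q-geolamination approximation $G(\hlam)$ containing $\ell$ as a periodic edge must, by the above cyclic-permutation analysis, have all vertices periodic of period $n$, which cannot approximate the intricate countable non-periodic basis of a caterpillar (Lemma~\ref{l:cater}).

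For part (2), since only two gaps of $\lam$ can contain $\ell$ as an edge and $\si_d^n$ permutes this pair, both $G$ and $H$ are periodic with period dividing $2n$. Part (1) applied to each shows each of $G, H$ is either finite or Fatou of degree $>1$. To rule out both being finite, I observe that in any q-geolamination $\hlam=\lam_\sim$, every finite gap is the convex hull $\ch(C)$ of a $\sim$-class $C$ with $|C|\ge 3$, and distinct classes have disjoint convex hulls, so two distinct finite gaps of $\hlam$ cannot share an edge. If $G, H$ were both finite, the main dichotomy and part (1) applied to each would produce finite gaps $G(\hlam), H(\hlam)$ in $\hlam$ both having $\ell$ as an edge, contradicting this. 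Hence at least one of $G, H$ must be a Fatou gap of degree $>1$, yielding the stated conclusion.

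The main obstacle I anticipate is the caterpillar exclusion in part (1): ruling out that a caterpillar gap of $\lam\in\ol{\laq_d}$ can carry a periodic edge requires delicately combining the combinatorial structure of caterpillar gaps given by Lemma~\ref{l:cater} with the rigidity of q-geolamination approximations containing a periodic edge, and this is the most substantive step of the proof.
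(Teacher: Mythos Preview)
Your overall strategy diverges from the paper's, and several steps do not go through as written.

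\textbf{Main dichotomy.} You invoke Lemma~\ref{l:limleaf1} to conclude that ``no leaf of $\hlam$ crosses $\ell$'', and from this that $\ell$ lies in the closure of a single gap of $\hlam$. But Lemma~\ref{l:limleaf1} only says that leaves \emph{sufficiently close} to $\ell$ are equal to or disjoint from $\ell$; its second part explicitly allows leaves of $\hlam$ to cross $\ell$ near the endpoints $a,b$. So your deduction that $a,b\in G(\hlam)'$ is unjustified. The paper instead takes the edge $\hell$ of $G(\hlam)$ closest to $\ell$; if $\hell\ne\ell$, then $\hell\cap\ell=\emptyset$, and if $\hell$ separated $\ell$ from the interior of $G(\hlam)$, the repelling nature of $\si_d^n$ at $a,b$ would push $\si_d^n(\hell)$ into the interior of $G(\hlam)$, a contradiction. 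This dynamical argument avoids any global claim about leaves crossing $\ell$.

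\textbf{Part (1), first half.} Your case analysis on the type of $G(\hlam)$ has real gaps. In the finite case, you assert that $\ell$ being a diagonal of a $\sim$-class and a leaf of the limit $\lam$ is an ``incompatibility'', but there is no a priori reason a leaf of $\lam$ cannot sit inside a finite gap of an approximating $\hlam$. In the Fatou-of-degree-${>}1$ case, the semiconjugacy of Lemma~\ref{l:fatou} does not forbid a chord of $\lam$ (not of $\hlam$) from joining two periodic points in $G(\hlam)'$. The paper's argument is again the simple dynamical one: since $\si_d^n(G(\hlam))=G(\hlam)$, the edge $\hell$ close to $\ell$ must satisfy $\si_d^n(\hell)\subset\bd(G(\hlam))$; but if $\hell\ne\ell$ and $\ell$ is interior, $\si_d^n$ pushes $\hell$ strictly outside $G(\hlam)$, contradicting invariance.

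\textbf{Part (1), second half.} You try to exclude Siegel and caterpillar for $G$ directly, and flag the caterpillar case as the main obstacle. The paper bypasses this entirely: since $\ell$ is a periodic edge of the periodic gap $G(\hlam)$ and $\hlam$ is a q-geolamination, $G(\hlam)$ is automatically either finite or a Fatou gap of degree ${>}1$; passing to the limit $G(\hlam)\to G$ then gives the same dichotomy for $G$. No analysis of caterpillar gaps is needed.

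\textbf{Part (2).} Here your argument is essentially the paper's: pass to $G(\hlam),H(\hlam)$ sharing the edge $\ell$ in a q-geolamination, observe that the pair must be (finite, Fatou${}_{>1}$) or (Fatou${}_{>1}$, Fatou${}_{>1}$), and take limits. This part is fine.
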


\begin{proof}
Suppose that $\ell=\ol{ab}$ and that $\si_d^n(a)=a$, $\si_d^n(b)=b$.
If $\hlam$ is sufficiently close
to $\lam$, then gaps $G(\hlam)$ exist by Lemma~\ref{l:limgap1}. By
definition, $G(\hlam)$ has an edge $\hell$ close to $\ell$. If
$\hell\ne \ell$, then $\hell$ is disjoint from $\ell$, and $\ell$ is
either disjoint or non-disjoint from the interior of $G(\hlam)$. In
the former case $\si_d^n(\hell)$ intersects the interior of
$G(\hlam)$, a contradiction. Thus, if $\hell\ne \ell$, then $\ell$
intersects the interior of $G(\hlam)$ and $G(\hlam)$ has an edge
close to but disjoint from $\ell$.

Let $\si_d^n(G)=G$. If $\hlam$ is close to $\lam$, then by
Lemma~\ref{l:limgap1} the gaps $G(\hlam)$ are well-defined and such
that $\si_d^n(G(\hlam))=G(\hlam)$. If $\ell$ is not an edge of
$G(\hlam)$, then by way of contradiction we may choose the edge $\hell$
of $G(\hlam)$ with $\hell\cap \ell=\0, \hell \to \ell$ as $\hlam\to
\lam$ while also having that $\ell$ intersects the interior of
$G(\hlam)$. Yet, this would imply that $\si_d^n(G(\hlam))\ne G(\hlam)$,
a contradiction. Moreover, suppose that $G$ is not finite. Then
$G(\hlam)$ is not finite either. Indeed, $G(\hlam)$ is a periodic gap
of a q-geolamination $\hlam$, and $\ell$ is an edge of $G(\hlam)$. Hence
$G(\hlam)$ is either a finite gap (and then we may assume that it is
the same gap for all $\hlam$'s), or a periodic Fatou gap of degree
greater than one. On the other hand, $G(\hlam)\to G$ as $\hlam\to
\lam$. This implies that either $G$ is fixed and rigid, or, in the
limit, the degree of $\si_d^n|_{\bd(G)}$ is greater than one.

If $\ell$ is a common edge of two gaps $G, H$ of $\lam$, a
geolamination $\hlam\in \laq_d$ close to $\lam$ has gaps $G(\hlam),
H(\hlam)$ close to $G$ and $H$. By the above $G(\hlam)$ and $H(\hlam)$
must share the leaf $\ell$ as their\, edge, hence $\ell$ is rigid.
Since $\hlam\in \laq_d$, either these gaps are both periodic Fatou gaps
of degree greater than one or one of them is finite periodic and the
other one is a periodic Fatou gap of degree greater than one. Since
$\lam\in  \ol{\laq_d}$, there is a sequence of geolaminations
$\lam_i\to \lam, \lam_i\in \laq_d$. Hence $\lam$ must have gaps $G$ and
$H$ of the same types as desired.
\end{proof}

To study (pre)periodic leaves we need Lemma~\ref{l:limgap3}.

\begin{lem}\label{l:limgap3}
Let $\lam\in \laq_d,$ let $G$ be a gap of $\lam$, let $H=\si^k_d(G)$ be
a gap, and let $\hell$ be an edge of $H$ such that, for any
geolaminations $\lam_i\to \lam, \lam_i\in \laq_d$ and their gaps
$H_i\to H$, the leaf $\hell$ is an edge of $H_i$ for large $i$ (e.g.,
this holds if $\hell$ is a periodic edge of a periodic gap $H$). If
$\ell$ is an edge of $G$ with $\si^k_d(\ell)=\hell$, then for any
geolaminations $\lam_i\to \lam, \lam_i\in \laq_d$ and any sequence of
their gaps $G_i\to G$, the leaf $\ell$ is an edge of $G_i$ for large
$i$. Thus, $(1)$ a (pre)periodic leaf of a gap that eventually maps to
a periodic gap, is rigid, and $(2)$ a finite gap that eventually maps
onto a periodic gap, is rigid.
\end{lem}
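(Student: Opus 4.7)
The plan is to establish the main pulling-back statement and then use it together with Lemma~\ref{l:limgap1.5}(1) to deduce (1) and (2). Fix any sequence $\lam_i\to\lam$ in $\laq_d$ with gaps $G_i\to G$; such $G_i$ exist by Lemma~\ref{l:limgap1} (take $G_i=G(\lam_i)$). Let $H_i=\si_d^k(G_i)$; since an image gap is determined by its vertex set and $\si_d$ is continuous on $\uc$, we have $H_i\to H$ in the Hausdorff metric, so by the hypothesis $\hell$ is an edge of $H_i$, and in particular $\hell\in\lam_i$, for all sufficiently large $i$.

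The key step is to show that $\ell$ itself is a leaf of $\lam_i$ for large $i$. Since $\lam_i$ is a q-geolamination, iterated sibling invariance provides $d^k$ pairwise disjoint sibling preimages of $\hell$ in $\lam_i$ under $\si_d^k$. By Theorem~\ref{t:siblinv} the limit $\lam$ is also sibling $\si_d$-invariant, so $\hell\in\lam$ has $d^k$ sibling preimages as well, one of which is $\ell$. Compactness combined with disjointness of the siblings lets one match the sibling preimages in $\lam_i$ with those in $\lam$ via Hausdorff limits; denote by $\ell_i^*\in\lam_i$ the sibling preimage of $\hell$ that converges to $\ell$. Then $\si_d^k(\ell_i^*)=\hell=\si_d^k(\ell)$, so $\ell_i^*$ lies in the finite set of chords in $\cdisk$ whose $\si_d^k$-image is $\hell$; combined with $\ell_i^*\to\ell$ this forces $\ell_i^*=\ell$ for large $i$. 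Hence $\ell\in\lam_i$. Because $G_i\to G$ and $\ell\subseteq\bd G$, the leaf $\ell$ lies on $\bd G_i$ for large $i$, and being a single chord of $\lam_i$ it must be an edge of $G_i$. This proves the main claim.

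To deduce (1), let $\ell$ be a (pre)periodic edge of a gap $G$ that eventually maps onto a periodic gap. Choose $k$ so large that both $G^*=\si_d^k(G)$ is periodic and $\ell^*=\si_d^k(\ell)$ is a periodic edge of $G^*$; this is possible since forward iteration preserves periodicity of the gap and eventually turns the (pre)periodic leaf into a periodic one. Lemma~\ref{l:limgap1.5}(1) gives that $\ell^*$ is an edge of $G^*(\hlam)$ for every $\hlam\in\laq_d$ sufficiently close to $\lam$, while Lemma~\ref{l:limgap1} gives $G^*(\hlam)\to G^*$. Thus the hypothesis of the main claim is met with $H=G^*$ and $\hell=\ell^*$, so $\ell$ is an edge of $G(\hlam)$ for $\hlam$ near $\lam$, meaning $\ell$ is rigid.

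For (2), let $G$ be a finite gap eventually mapping onto a periodic gap $H$. Since $H$ is a finite periodic gap of some period $n$, the $\si_d^n$-action permutes its finite vertex set (otherwise two vertices would collide under some iterate, contradicting periodicity of $H$), so all edges of $H$ are periodic. Pulling back, every edge of $G$ is (pre)periodic, and by (1) rigid. Hence, for any $\hlam\in\laq_d$ close to $\lam$, every edge of $G$ is a leaf of $\hlam$ and, by the main claim, an edge of $G(\hlam)$. Since $G(\hlam)\to G$ and $G$ is a finite polygon, $G(\hlam)$ is a finite gap for $\hlam$ near $\lam$; a finite gap whose boundary contains the complete polygonal boundary of $G$ must equal $G$. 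Therefore $G(\hlam)=G$ and $G$ is rigid. The main obstacle is the sibling-preimage matching step in the main claim, which hinges on the sibling invariance of all geolaminations involved (Theorem~\ref{t:siblinv}) and converts Hausdorff closeness into exact equality through the finite chord-preimage principle.
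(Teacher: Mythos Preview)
Your argument for the main pulling-back claim takes a different route from the paper's and has a genuine gap in the ``sibling matching'' step. You produce $d^k$ pairwise disjoint leaves of $\lam_i$ mapping to $\hell$ under $\si_d^k$, and separately a sibling collection in $\lam$ that contains $\ell$, and then assert that ``compactness combined with disjointness of the siblings lets one match'' the two. But nothing forces the $\lam_i$-collection to converge to the particular $\lam$-collection containing $\ell$. The set of chords $c$ with $\si_d^k(c)=\hell$ has $d^{2k}$ elements, and in general there are several distinct maximal families of $d^k$ pairwise disjoint chords among them (already for $\si_2^2$ there are two). After passing to a subsequence, the $\lam_i$-siblings could stabilize on a family that misses $\ell$ entirely. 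Your next step, that $\ell\in\lam_i$ together with $G_i\to G$ forces $\ell$ onto $\bd G_i$, is likewise asserted rather than argued.

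The paper avoids both problems by going \emph{forward} rather than backward: choose an edge $\ell_i$ of $G_i$ with $\ell_i\to\ell$ (possible because $G_i\to G$); then $\si_d^k(\ell_i)$ is an edge of $H_i$ converging to $\hell$, and since $\hell$ is already an edge of $H_i$ by hypothesis one gets $\si_d^k(\ell_i)=\hell$. Now your finite-preimage observation (which is exactly the paper's punch line) gives $\ell_i=\ell$. Because $\ell_i$ was an edge of $G_i$ from the start, no extra work is needed to place $\ell$ on $\bd G_i$. Your derivations of (1) and (2) from the main claim via Lemma~\ref{l:limgap1.5}(1) are fine and in fact more explicit than the paper, which leaves them as immediate consequences; in (2) you might add a word for edges of $G$ that are (pre)critical rather than (pre)periodic.
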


\begin{proof}We use the notation introduced in the statement of the Lemma.
By way of contradiction let us assume that there is a sequence of
$\si_d$-invariant q-geolaminations $\lam_i\to \lam$ with gaps $H_i\to
H$ and $G_i\to G$ such that $\si_d^k(G_i)=H_i$ and $\ell$ is not an
edge of $G_i$ for all $i$ (while $\hell$ is an edge of all $H_i$). Then
we can always choose an edge $\ell_i$ of $G_i$ so that $\ell_i\to
\ell$. Then $\si_d^k(\ell_i)\to \hell$, and by the assumption
$\si_d^k(\ell_i)=\hell$ for large $i$. Since $\ell_i\to \ell$ this
actually implies that $\ell_i=\ell$ for large $i$ as desired.
\end{proof}

For completeness, let us show that in some cases rigidity of pullbacks
of rigid leaves can be proven regardless of periodicity. By a
\emph{polygon} we mean a finite convex polygon. By a
\emph{($\si_d$-)collapsing polygon} we mean a polygon $P$, whose edges
are chords of $\cdisk$ such that their $\si_d$-images are the
same non-degenerate chord (thus as we walk along the edges of $P$,
their $\si_d$-images walk back and forth along the same non-degenerate
chord; as before, if it does not cause ambiguity we simply talk about
\emph{collapsing polygons}). When we say that $Q$ is a \emph{collapsing polygon} of a
geolamination $\lam$, we mean that all edges of $Q$ are leaves of
$\lam$; we also say that $\lam$ \emph{contains a collapsing polygon Q}.
However, this does not imply that $Q$ is a gap of $\lam$ as $Q$ might
be further subdivided by leaves of $\lam$ inside $Q$.

\begin{lem}[Lemmas 3.11, 3.14 from \cite{bmov13}]\label{l:3-11-14} Let
$\lam$ be a sibling $\si_d$-invariant geolamination. Suppose that
$L=\ell_1\cup\dots \ell_k$ is a concatenation of leaves of $\lam$ such
that $\si_d(\ell_i)=\ell, 1\le i\le k,$ for some non-degenerate leaf
$\ell$. Then there exists a maximal collapsing polygon $P$ of $\lam$
such that $L\subset P$ and the $\si_d$-image of any edge of $P$ equals
$\ell$. Moreover, any leaf of $\lam$ whose image is $\ell$, is either
disjoint from $P$ or is contained in $P$.
\end{lem}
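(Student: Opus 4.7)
Write $\ell=\ol{uv}$ and set $U=\si_d^{-1}(u)$, $V=\si_d^{-1}(v)$. Since $\si_d$ is an orientation-preserving $d$-to-$1$ covering of $\uc$, these two $d$-element sets interlace in positive cyclic order, so $U\cup V$ consists of $2d$ points alternating between $U$-type and $V$-type. Any chord whose $\si_d$-image is $\ell$ must join a point of $U$ to a point of $V$. Let $\mathcal{M}$ denote the collection of all leaves of $\lam$ with $\si_d$-image $\ell$; since $\lam$ is a geolamination, the members of $\mathcal{M}$ are pairwise unlinked, and by hypothesis $\ell_1,\ldots,\ell_k\in\mathcal{M}$, so $L$ describes a simple path $a_0-a_1-\cdots-a_k$ in the bipartite graph $\mathcal{G}$ on $U\cup V$ whose edges are the elements of $\mathcal{M}$.

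The strategy is to take $P=\ch(V_\infty)$, where $V_\infty$ is the vertex set of the connected component of $\mathcal{G}$ containing the endpoints of $L$. Since $U\cup V$ is finite, $V_\infty$ and $P$ are well-defined, and $L\subset P$ by construction. The main content of the proof is to show that $P$ is a collapsing polygon of $\lam$ with $\si_d$-image $\ell$: that is, in cyclic order on $\uc$ the vertices of $V_\infty$ alternate between $U$- and $V$-type, and each cyclically consecutive pair is joined by a leaf in $\mathcal{M}$. Both assertions rely essentially on \emph{sibling invariance}: for each leaf $\ell_i$ there are $d$ pairwise disjoint siblings in $\lam$, all of which lie in $\mathcal{M}$, and iterating this construction on each newly produced leaf supplies enough leaves of $\mathcal{M}$ to force the alternation and to close off the polygon. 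The alternation step is the main obstacle I foresee, and the clean argument is by contradiction: if two cyclically consecutive vertices of $V_\infty$ had the same type, one uses sibling invariance on a strategically chosen leaf of $\mathcal{M}$ near the ``bad'' cyclic arc to produce a new leaf that either reaches outside $V_\infty$ (contradicting connectedness of the component) or forces in a new vertex of the opposite type between them, in either case violating the maximal definition of $V_\infty$.

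Once $P$ is established as a collapsing polygon of $\lam$ with $L\subset P$ and all edges mapping to $\ell$, maximality is automatic: any strictly larger such polygon would have a vertex not in $V_\infty$, reached from $V_\infty$ by a leaf of $\mathcal{M}$, contradicting the definition of $V_\infty$ as a connected component of $\mathcal{G}$. For the ``moreover'' statement, let $m\in\mathcal{M}$. Since $m$ and every edge of $P$ are leaves of $\lam$, they are pairwise unlinked. If $m$ shares an endpoint with $P$, that endpoint lies in $V_\infty$, and by definition of $V_\infty$ the other endpoint of $m$ also lies in $V_\infty$; hence $m$ is a chord of the convex polygon $P$ (an edge or a diagonal), so $m\subset P$. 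Otherwise both endpoints of $m$ lie outside $V_\infty$, and since $m$ cannot cross any edge of $P$, the chord $m$ lies entirely outside the convex region bounded by $P$ and is therefore disjoint from $P$.
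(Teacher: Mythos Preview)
The paper does not give its own proof of this lemma; it is quoted from \cite{bmov13}. So there is no in-paper argument to compare against, and the question is simply whether your plan is sound.

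Your overall architecture is the natural one and matches what one expects from the source: take $V_\infty$ to be the connected component of the ``preimage graph'' $\mathcal{G}$ containing the endpoints of $L$, and set $P=\ch(V_\infty)$. Your endgame is clean: maximality follows immediately from the definition of a connected component, and the ``moreover'' clause is exactly the observation that an element of $\mathcal{M}$ meeting $P$ must share a vertex with $V_\infty$, hence have both endpoints in $V_\infty$.

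The weakness is in the middle. You correctly isolate the two things to prove: (a) cyclically consecutive vertices of $V_\infty$ alternate in type, and (b) each cyclically consecutive pair is joined by a leaf of $\mathcal{M}$. Your contradiction sketch for (a) is essentially right and can be made precise: a full sibling collection of any leaf in $\mathcal{M}$ is a non-crossing perfect $U$--$V$ matching of all of $U\cup V$; any member with one endpoint in an arc $(p,q)$ between consecutive $V_\infty$-vertices must have both endpoints there (else it would cross a leaf of the path in $V_\infty$ connecting $p$ to $q$, or drag a new vertex into $V_\infty$), so the arc carries equal numbers of $U$- and $V$-points, forcing $p,q$ to have opposite types.

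But you then label (a) as ``the main obstacle'' and pass over (b) with the phrase ``iterating this construction \dots\ supplies enough leaves \dots\ to close off the polygon.'' This is where the genuine work lies, and your sketch does not indicate how to do it. Knowing that every leaf of $\mathcal{M}'=\mathcal{M}|_{V_\infty}$ sits inside some non-crossing perfect matching of $V_\infty$ by leaves of $\mathcal{M}'$ is not, on its face, enough to force every \emph{boundary edge} $\ol{pq}$ of $\ch(V_\infty)$ into $\mathcal{M}'$; one has to use in an essential way that all leaves of $\mathcal{M}'$ are \emph{pairwise} unlinked (not just non-crossing within a single matching). For instance, if one forgets pairwise unlinkedness, the hexagon with $\mathcal{M}'=\{\ol{12},\ol{23},\ol{34},\ol{45},\ol{56},\ol{14},\ol{36}\}$ looks like a counterexample (connected, sibling matchings exist, $\ol{16}$ absent) --- it is ruled out only because $\ol{14}$ and $\ol{36}$ cross. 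A correct argument for (b) has to braid together the connectivity of $V_\infty$, the sibling matchings, and the global unlinkedness; the iteration you allude to needs a monotone quantity (e.g.\ minimising the number of $V_\infty$-vertices strictly between $q$ and the $\mathcal{M}'$-neighbour of $p$ closest to $q$) together with the unlinkedness constraint to terminate. This step deserves a real proof, not a sentence.
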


Often rigid leaves of a limit geolamination give rise to rigid
pullbacks.

\begin{lem}\label{l:rigipul}
Consider a lamination $\lam\in \ol{\laq_d}$, a non-degenerate leaf $\hell$ of $\lam$,
and a leaf $\ell$ of $\lam$ with $\si^k_d(\ell)=\hell$ for some $k\ge
0$. If $\hell$ is rigid and no leaf $\ell, \si_d(\ell), \dots,
\si^{k-1}(\ell)$ is contained in a collapsing polygon of $\lam$, then
$\ell$ is rigid.
\end{lem}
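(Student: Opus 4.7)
The plan is to argue by induction on $k$. For $k=0$, we have $\ell=\hell$ and rigidity is given. For $k\ge 1$, set $\ell'=\sigma_d(\ell)$; note $\ell'$ is non-degenerate since its $(k-1)$-st iterate $\hell$ is. The orbit $\ell',\sigma_d(\ell'),\dots,\sigma_d^{k-2}(\ell')$ equals $\sigma_d(\ell),\dots,\sigma_d^{k-1}(\ell)$, which is a sub-orbit of the original, and so contains no leaf in a collapsing polygon. By the inductive hypothesis applied with exponent $k-1$, the leaf $\ell'$ is rigid. Hence it suffices to prove the base case: assuming $\ell'\in\lam$ is rigid and non-degenerate, $\sigma_d(\ell)=\ell'$, and $\ell$ is not in a collapsing polygon of $\lam$, conclude that $\ell$ is rigid.

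For the base case, I would take any sequence $\lam_i\to \lam$ with $\lam_i\in\laq_d$ and suppose for contradiction that $\ell\notin\lam_i$ along a subsequence. By rigidity of $\ell'$, we have $\ell'\in\lam_i$ for large $i$. Since $\lam_i$ is a q-geolamination, it is sibling $\si_d$-invariant by Theorem~\ref{t:siblinv}, so $\ell'$ admits $d$ pairwise disjoint sibling leaves $m_i^{(1)},\dots,m_i^{(d)}$ in $\lam_i$. Writing $\ell'=\ol{ab}$, each $m_i^{(j)}$ has both endpoints in the fixed finite set $\sigma_d^{-1}(a)\cup\sigma_d^{-1}(b)$, and hence belongs to the (discrete) finite collection $\mathcal{S}$ of preimage chords of $\ell'$. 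Consequently the unordered $d$-tuple $\{m_i^{(1)},\dots,m_i^{(d)}\}$ ranges over a finite set of possibilities, so after passing to a further subsequence it is constant, equal to some $\{m^{(1)},\dots,m^{(d)}\}$. By Hausdorff convergence each $m^{(j)}$ is a leaf of $\lam$.

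The heart of the argument is now an endpoint-counting observation. The $d$ pairwise disjoint preimage chords $m^{(j)}$ use $2d$ distinct endpoints, which is precisely all of $\sigma_d^{-1}(a)\cup\sigma_d^{-1}(b)$. Now $\ell$ is itself a preimage chord of $\ell'$, with both endpoints already appearing among the $m^{(j)}$. Either $\ell=m^{(j_0)}$ for some $j_0$, in which case $\ell=m_i^{(j_0)}\in\lam_i$ along the subsequence, contradicting $\ell\notin\lam_i$; or else $\ell$ must share at least one endpoint with some $m^{(j_0)}\ne\ell$. In the latter case $\ell\cup m^{(j_0)}$ is a concatenation of two leaves of $\lam$ whose $\sigma_d$-images both equal $\ell'$, so by Lemma~\ref{l:3-11-14} this concatenation is contained in a maximal collapsing polygon $P$ of $\lam$; hence $\ell\subset P$, contradicting the standing hypothesis.

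The main obstacle is recognizing the correct dichotomy: one must realize that the rigidity of $\ell'$ alone does not force nearby leaves of $\lam_i$ to coincide with $\ell$, and instead exploit sibling invariance to constrain the sibling tuple to a finite discrete set. Once that is in place, the combinatorial ``$d+1$ into $2d$ endpoints'' pigeonhole, combined with Lemma~\ref{l:3-11-14}, precisely converts ``$\ell$ is not rigid'' into ``$\ell$ lies in a collapsing polygon,'' which is the contrapositive of what we want.
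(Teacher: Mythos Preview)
Your proof is correct and follows essentially the same route as the paper: induction reduces to the one-step case, and there you use sibling invariance to produce $d$ pairwise disjoint preimage leaves of the rigid image, observe by finiteness (of the $2d$ preimage points) that these leaves may be taken constant along a subsequence and hence lie in $\lam$, and then use Lemma~\ref{l:3-11-14} on the resulting concatenation to force $\ell$ into a collapsing polygon. The only cosmetic differences are that you make the finite-subsequence argument and the $2d$-endpoint pigeonhole explicit, and you apply Lemma~\ref{l:3-11-14} to a two-leaf concatenation rather than three.
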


\begin{proof}
First we prove the lemma for $k=1$. By way of contradiction suppose
that the leaf $\ell$ is not rigid. Then we may choose a sequence of
$\si_d$-invariant q-geolaminations $\lam_i\to \lam$ such that $\ell$ is
\emph{not} a leaf of any of them. Since $\hell$ is rigid, we may assume
that $\hell$ is a leaf of all $\lam_i$. By properties of
$\si_d$-invariant q-geolaminations we may also assume that there is a
collection of $d$ pairwise disjoint leaves $\ell_1, \dots, \ell_d$, all distinct from $\ell$,
such that all these leaves belong to every $\lam_i$ and map to $\hell$ under
$\si_d$. Clearly, all leaves $\ell_i, 1\le i\le d$ also belong to $\lam$.
Thus, the two endpoints of $\ell$ are also endpoints of two leaves,
say, $\ell_i$ and $\ell_j$ of $\lam$. The chain $\ell_i\cup \ell\cup \ell_j$
satisfies the conditions of Lemma~\ref{l:3-11-14}. Hence, $\ell$ is
contained in a collapsing polygon of $\lam$, a contradiction. Induction
now proves the lemma for any $k\ge 1$.
\end{proof}

Now we study rigidity of infinite periodic gaps. Consider the
quadratic case. Suppose that $\ell\ne \ol{0 \frac12}$ is a diameter
of $\cdisk$ and denote by $A_\ell$ the closed semi-circle based upon
$\ell$ and not containing $0$. Let $S_\ell$ be the set of all points
of $\uc$ with entire orbits contained in $A_\ell$. It is known that
for an uncountable family of diameters $\ell$ the set $S_\ell$ is a
Cantor set containing the endpoints of $\ell$. Moreover, for these
diameters $\ell$ the map $\si_2$ restricted on $\bd(\ch(S_\ell))$ is
semiconjugate to an irrational rotation and the set $\ch(S_\ell)$
itself is called an invariant \emph{Siegel} gap. Call such diameters
$\ell$ \emph{Siegel diameters}.

It is easy to see that in fact for each Siegel diameter $\ell$ there
exists the unique quadratic lamination $\sim_\ell$, of which $\ch(S_\ell)$
is a unique invariant gap. In fact, if $\ell_i\to \ell$ is a sequence
of Siegel diameters converging to a Siegel diameter, then one can show
that $\lam_{\sim_{\ell_i}}\to \lam_{\sim_{\ell}}$. On the other hand,
 if $\ell\ne \hell$ are two distinct Siegel
diameters, then $S_\ell\ne S_{\hell}$. Thus, in this case Siegel gaps
are \emph{not} rigid. Observe that the Siegel gaps described above \emph{do
not} have periodic edges but \emph{do} have critical edges.

It turns out that presence of critical edges of periodic gaps is
necessary for their non-rigidity. Recall that, for a gap $G$, a
\emph{hole} of $G$ is an arc $(a, b)$  such that $\ol{ab}$ is an
edge of $G$ and $(a, b)$ contains no points of $G'$; this hole of
$G$ is said to be the \emph{hole of $G$ behind $\ol{ab}$} and is
denoted by $H_G(\ell)$.

\begin{lem}\label{l:rigigap1}
Suppose that $G$ is a periodic Fatou gap of a geolamination $\lam\in
\ol{\laq_d}$. If no image of $G$ has critical edges, then $G$ is rigid.
\end{lem}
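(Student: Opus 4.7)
The plan is first to identify the combinatorial type of $G$, then to show every edge of $G$ is rigid, and finally to bootstrap these edge-rigidities to rigidity of $G$ itself. Since $G$ is a Fatou gap, its basis $G'$ is uncountable, so $G$ is not a caterpillar gap. If $G$ were Siegel of period $n$, Lemma~\ref{l:siegel} would force a critical edge somewhere in its cycle, contradicting the hypothesis. Hence $G$ must be a Fatou gap of some degree $k>1$ under $\si_d^n$, and Lemma~\ref{l:fatou} gives that $\si_d^n|_{\bd(G)}$ is monotonically semiconjugate to $\si_k$.

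Next, by Lemma~\ref{l:edges}, every edge of any image $\si_d^j(G)$ is either (pre)periodic or (pre)critical; the hypothesis rules out the latter, so all edges of $G$ are (pre)periodic. Each periodic edge $\ell$ of $G$ is rigid by Lemma~\ref{l:limgap1.5}(1), applied to the iterate of $G$ in which $\ell$ sits as a periodic edge of a periodic gap. For a preperiodic edge $\ell$ of $G$ with $\si_d^m(\ell)$ a periodic edge of the periodic gap $\si_d^m(G)$, Lemma~\ref{l:limgap3} transfers rigidity back from $\si_d^m(\ell)$ to $\ell$.

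For any sequence $\lam_i\to\lam$ in $\laq_d$, set $G_i:=G(\lam_i)$. Lemma~\ref{l:limgap1} yields that $G_i$ is well-defined for large $i$, that $\si_d^n(G_i)=G_i$, and that $G_i\to G$. Combining the edge-rigidity statements above with Lemma~\ref{l:limgap1.5}(1) and Lemma~\ref{l:limgap3} once more, each edge $\ell$ of $G$ is an edge of $G_i$ for all $i\ge N(\ell)$. It remains to parlay this edge-by-edge statement into $G_i=G$ for all sufficiently large $i$.

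This last step is the main obstacle. I would argue by contradiction: if $G_i\ne G$ for infinitely many $i$, then some edge $e_i$ of $G_i$ is a leaf of $\lam_i$ strictly inside $G$. Passing to a subsequence, either $e_i$ converges to a non-degenerate chord---immediately a leaf of $\lam$ inside the gap $G$, which is impossible---or $e_i$ degenerates to a single point on $\uc$. In the latter case, for large $i$ the gap $G_i$ cannot be finite or caterpillar, since its basis contains the endpoints of arbitrarily many edges of $G$ and these endpoints are dense in the uncountable set $G'$; nor can $G_i$ be Siegel, for Lemma~\ref{l:siegel} would then give a critical edge in the cycle of $G_i$, and Hausdorff convergence would produce a critical edge in the cycle of $G$, contradicting the hypothesis. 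So $G_i$ is Fatou of some degree $k_i>1$, and $\si_d^n|_{\bd(G_i)}$ is semiconjugate to $\si_{k_i}$, hence topologically exact modulo its collapsed arcs; iterating $\si_d^n$, the length $|\si_d^{mn}(e_i)|$ must grow to a definite fraction of the circle at some minimal $m=m_i$, and a subsequential limit of $\si_d^{m_i n}(e_i)$ gives a non-degenerate leaf of $\lam$ inside $G$---the final contradiction. The hard part is making this iteration argument rigorous: the monotone semiconjugacy may collapse whole arcs of $\bd(G_i)$ to points, so one has to rule out the scenario where all forward iterates of $e_i$ remain trapped in collapsed arcs, and one must control critical-edge behavior of the gaps $G_i$ uniformly as $i\to\infty$.
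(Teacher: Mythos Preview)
Your opening reductions (ruling out caterpillar and Siegel, showing every edge of $G$ is (pre)periodic and hence individually rigid) are fine and match the paper's implicit setup. The divergence is in the final step, where you try to pass from ``each edge $\ell$ of $G$ is an edge of $G_i$ for $i\ge N(\ell)$'' to ``$G_i=G$ for all large $i$.'' Since $G$ has infinitely many edges, the $N(\ell)$ are a priori unbounded, and you are right that this is the obstacle. Your contradiction argument via iterating a short edge $e_i$ of $G_i$ inside $G$ is plausible in spirit but, as you yourself note, incomplete: controlling the growth of $\si_d^{mn}(e_i)$ uniformly in $i$, while simultaneously ruling out that $e_i$ lives in collapsed arcs of the semiconjugacy for $G_i$, is genuinely delicate and you have not carried it out. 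There is also a second case you do not address: $G_i\ne G$ could occur because $G_i$ protrudes into a small hole of $G$ (i.e., $G_i\setminus G\ne\emptyset$), rather than because some edge of $G_i$ cuts through the interior of $G$.

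The paper sidesteps all of this with a much shorter argument that you are missing. Instead of working with all edges of $G$, take only the \emph{finite} set $\mathcal A$ of edges of $G$ whose holes have length at least $1/d$. Set $A=\uc\setminus\bigcup_{\ell\in\mathcal A}H_G(\ell)$. The key observation is that $G'$ is \emph{exactly} the set of points whose entire $\si_d$-orbit stays in $A$: any point in a smaller hole eventually lands, by expansion, in one of the big holes. Since $\mathcal A$ is finite and each $\ell\in\mathcal A$ is (pre)periodic (no critical edges by hypothesis), there is a single $N$ beyond which every $\ell\in\mathcal A$ is an edge of $G_i$. But then $G_i'$ must also equal the set of points whose orbit stays in $A$, so $G_i=G$. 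This finiteness trick is the missing idea in your proposal.
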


\begin{proof}
Suppose that $G$ is of period $k$ and degree $r>1$ and that
no eventual image of $G$ has critical edges. Without loss of
generality we may assume that $k=1$. We need to show that if a
sequence of $\si_d$-invariant q-geolaminations is such that
$\lam_i\to \lam$, then for some $N$ and all $i>N$ the gap $G$ is a
gap of $\lam_i$. By Lemma~\ref{l:limgap3} for any (pre)periodic edge
$\ell$ of $G$ there is $N=N(\ell)$ such that $\ell$ is an edge of
$G_i$. Choose the set $\mathcal A$ of all edges $\ell$ of $G$ such
that the holes $H_G(\ell)$ are of length greater than or equal to
$\frac1d$. Then there are finitely many such edges of $G$. Moreover,
by the assumption there are no critical edges in $\mathcal A$
(because there are no critical edges of $G$ at all).

Set $A=\uc\sm \bigcup_{\ell\in \mathcal A} H_G(\ell)$. It is easy to
see that $G'$ is in fact the set of all points of the circle that have
their entire orbits contained in $A$. Indeed, it is obvious that all
points of $G'$ have their entire orbits contained in $A$. Now, take a
point $x\in A\sm G'$. Set $I=H_G(\ell)$ to be a hole of $G$ containing
$x$. Since $\si_d$ is expanding, for some minimal $n$ we will have that
$\si_d^n(\ell)\in \mathcal A$. At this moment $x$ will be mapped
outside $A$, which shows that $x$ does no belong to the set of all
points of the circle that have their entire orbits contained in $A$.
It follows that if $N$ is chosen so that, for any $i>N$, all edges of $G$
belonging to $\mathcal A$ are also edges of $G_i$, then $G_i=G$.
\end{proof}

Geolaminations $\lam$ that belong to the closure of the family of all
$\si_d$-invariant q-geolaminations admit a phenomenon, which is
impossible for q-geolaminations, namely, they might have more than two
leaves coming out of one point of the circle.

\begin{dfn}\label{d:cone}
A family $C$ of leaves $\ol{ab}$ sharing the same endpoint $a$
is said to be a \emph{cone (of leaves of $\lam$)}. The point $a$ is
called the \emph{vertex} of the cone $C$; the set $\uc\cap C^+$ is called
the \emph{basis} of the cone $C$ and is denoted by $C'$.
We will identify $C$ with $C^+$.
A cone is said
to be \emph{infinite} if it consists of infinitely many leaves.
\end{dfn}

A few initial general results about cones of sibling $\si_d$-invariant
geolaminations are obtained in \cite{bmov13}.

\begin{lem}[Corollary 3.17 \cite{bmov13}]\label{l:3.17}
Let $\lam$ be a sibling $\si_d$-invariant geolamination and $T\subset
\lam^+$ be a cone of $\lam$ consisting of two or three leaves with a
common endpoint $v$. Suppose that $S\subset \lam^+$ is a cone of $\lam$
with $\si_d(S)=T$ such that and $\si_d|_S$ is one-to-one. Then the
circular orientation of the sets $T'$ and $S'$ is the same.
\end{lem}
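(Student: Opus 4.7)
The plan is first to reduce the three-leaf case to the two-leaf case. Given a cone $S = \{\ell_1, \ell_2, \ell_3\}$ at $u$, each sub-pair $\{\ell_i, \ell_j\}$ is itself a cone of $\lam$ satisfying the hypotheses of the lemma (the restriction of a bijection remains a bijection), and if the positive circular order on $\uc$ agrees on every pair, it must agree on the triple, since the three pairwise conditions determine the $4$-point cyclic order uniquely. So I may restrict to the case $|S'| = 3$ (i.e., a two-leaf cone).

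For a two-leaf cone $S = \{\ol{u b_1}, \ol{u b_2}\}$, I would set up coordinates on $\uc \cong \R/\Z$ so that $b_i = u + \phi_i$ with $0 < \phi_1 < \phi_2 < 1$, giving positive order $(u, b_1, b_2)$. Writing $d\phi_i = k_i + \beta_i$ with $k_i \in \{0, \dots, d-1\}$ and $\beta_i \in [0, 1)$, the positive order at $v = \si_d(u)$ is $(v, a_1, a_2)$ iff $\beta_1 < \beta_2$. I argue by contradiction, assuming $\beta_2 < \beta_1$; combined with $\phi_1 < \phi_2$ this forces $k_1 < k_2$.

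The key step invokes part (\ref{3}) of Definition~\ref{d:siblinv} applied to $\ell_2 = \ol{u b_2}$: there exist $d$ pairwise disjoint leaves $\tau_0 = \ell_2, \tau_1, \dots, \tau_{d-1}$ of $\lam$, all mapping to the non-degenerate leaf $\ol{v a_2}$. Each $\tau_j$ connects a point of $\si_d^{-1}(v) = \{u + j/d\}_{j=0}^{d-1}$ to one of $\si_d^{-1}(a_2) = \{u + (\beta_2 + j)/d\}_{j=0}^{d-1}$; since $\tau_0$ uses $u$ (corresponding to $j = 0$) and $b_2$ (corresponding to $j = k_2$), the $2(d-1)$ remaining endpoints of $\tau_1, \dots, \tau_{d-1}$ exhaust the two preimage sets minus $\{u, b_2\}$. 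Because $a_1 \ne v$ and $a_1 \ne a_2$, the point $b_1$ lies in neither preimage set, so $b_1$ is not an endpoint of any $\tau_j$. Hence for $j \ge 1$ the leaf $\tau_j$ shares no endpoint with $\ell_1 = \ol{u b_1} \in \lam$, and since leaves of $\lam$ cannot cross, each such $\tau_j$ must have both endpoints in the same open arc cut off by $\ell_1$. Consequently, the total number of $\tau_j$-endpoints ($j \ge 1$) inside the positive arc $(u, b_1)$ must be even.

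A direct enumeration contradicts this. From $\si_d^{-1}(v) \sm \{u\}$, the points $j/d$ lying in $(u, b_1) = (0, (k_1 + \beta_1)/d)$ are those with $1 \le j \le k_1$, contributing $k_1$ endpoints. From $\si_d^{-1}(a_2) \sm \{b_2\}$, the condition $(\beta_2 + j)/d < (k_1 + \beta_1)/d$ combined with $0 < \beta_1 - \beta_2 < 1$ yields $0 \le j \le k_1$; since $k_2 > k_1$, the excluded index $k_2$ is outside this range, contributing $k_1 + 1$ endpoints. The total is $2k_1 + 1$, which is odd—the desired contradiction. The main obstacle is precisely this bookkeeping, in particular verifying that neither $u$ nor $b_1$ appears as an endpoint of any $\tau_j$ with $j \ge 1$, which uses both non-degeneracy of the image leaves $\ol{v a_i}$ and the injectivity implied by $a_1 \ne a_2$; once this is settled, the parity argument is immediate.
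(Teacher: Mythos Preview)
Your proof is correct. Note that the paper itself does not give a proof of this lemma: it is quoted verbatim as Corollary~3.17 of \cite{bmov13} and then used as a black box, so there is no ``paper's own proof'' to compare against.

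That said, your argument is a clean, self-contained derivation directly from Definition~\ref{d:siblinv}(\ref{3}). The reduction from three leaves to two is valid: knowing the cyclic order of every triple $(u,b_i,b_j)$ containing the vertex $u$ is equivalent to knowing the linear order of the $b_i$ along the positively oriented arc starting at $u$, which determines the full cyclic order on $S'$. For the two-leaf case, the parity count is carried out accurately; the two delicate points are (i) that $u$ is used only by $\tau_0$ (since the $d$ siblings are pairwise disjoint and hence hit each preimage of $v$ exactly once) and (ii) that $b_1\notin\si_d^{-1}(v)\cup\si_d^{-1}(a_2)$ (forced by $a_1\notin\{v,a_2\}$), and you handle both. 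One small clarification worth adding: the hypothesis that $\si_d|_S$ is one-to-one forces $a_1,a_2\ne v$, so $\beta_1,\beta_2\in(0,1)$ rather than $[0,1)$; you implicitly use this strict positivity in the inequalities $j\le k_1$ and $0\le j$, so it is worth making explicit.
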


We are mostly interested in studying cones with periodic vertices
(without loss of generality we will actually consider cones with fixed
vertices). A trivial case here is that of a finite cone.

\begin{lem}\label{l:finicon}
Let $\lam$ be a sibling $\si_d$-invariant geolamination. If $\ol{ab}$
is a leaf of $\lam$ with periodic endpoints, then the periods of $a$ and
$b$ coincide. In particular, if $C$ is a finite cone of $\lam$ with a
periodic vertex, then all points of its basis $C'$ are of the same
period.
\end{lem}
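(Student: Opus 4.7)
The plan is to prove the first claim by contradiction. Suppose $a$ has period $n$ and $b$ has period $m$ with $n \ne m$; by symmetry we may assume $n < m$. By forward invariance $\ell' = \si_d^n(\ol{ab}) = \ol{a, \si_d^n(b)}$ is a leaf of $\lam$, and since $\si_d^n(b) \ne b$ it is distinct from $\ell = \ol{ab}$ while sharing the endpoint $a$. Iterating $\si_d^n$ produces a finite cone $C_a = \{\mu_j = \si_d^{jn}(\ell) : 0 \le j < p\}$ at $a$, where $p = m/\gcd(m,n) \ge 2$, cyclically permuted by $\si_d^n$. Applying Lemma~\ref{l:3.17} to successive two-leaf sub-cones $\{\mu_j, \mu_{j+1}\}$ and their $\si_d^n$-images forces the points $\si_d^{jn}(b)$ to appear around $a$ in the orbit order $j = 0, 1, \ldots, p-1$.

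Next I would exploit the sibling axiom. Applied to $\ell$, it gives $d$ pairwise disjoint leaves $\ell_1 = \ell, \ell_2, \ldots, \ell_d$ all mapping under $\si_d$ to $\si_d(\ell)$; for $i \ge 2$ the endpoints of $\ell_i$ must lie in $\si_d^{-1}(\si_d(a)) \setminus \{a\}$ and in $\si_d^{-1}(\si_d(b)) \setminus \{b\}$. The same applies at each $\mu_j$ and at further iterates $\si_d^k(\ell)$. Combining this with the cyclic arrangement of the $\mu_j$ around $a$, one obtains enough constraints on the positions of these forced sibling leaves to conclude that some such chord must separate the endpoints of a leaf already present in $\lam$, producing a crossing that contradicts Definition~\ref{d:geolam}. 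In the cleanest sub-case $d = 2$, $n = 1$, $m = 2$ the contradiction is immediate: the only possible disjoint sibling of $\ell = \ol{a, b}$ is $\ol{a + \tfrac{1}{2}, b + \tfrac{1}{2}}$, and direct inspection shows this chord crosses $\ell' = \ol{a, \si_2(b)}$.

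For the ``in particular'' statement, let $C$ be a finite cone with periodic vertex $v$ of period $n$ (with periodic basis, as the claim implicitly assumes). Applying the first part of the lemma to each leaf $\ol{vw} \in C$ gives that the period of $w$ equals $n$; hence every point of $C'$ has period $n$.

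The hard part will be executing the crossing argument cleanly in full generality. While for $d = 2$ with $a$ fixed the sibling geometry can be read off directly, in general one may need to apply sibling invariance at several iterates of $\ell$ and carefully track the positions of the preimages of $a$ and $b$ around the circle before the unavoidable crossing becomes apparent; the combinatorial bookkeeping that pins down which sibling-forced chord actually crosses an existing leaf of $C_a$ is the technical core of the proof.
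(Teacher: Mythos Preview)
Your setup for the first claim is exactly the paper's: form the cone $C_a=\{\mu_j=\si_d^{jn}(\ell):0\le j<p\}$ at the $\si_d^n$-fixed vertex $a$, with $p\ge 2$, cyclically permuted by $\si_d^n$. But you stop one step short and then take a long detour. Once you have this cone, Lemma~\ref{l:3.17} alone already finishes the argument: since $a$ is fixed, removing $a$ linearly orders the points $b_j=\si_d^{jn}(b)$ along the arc $\uc\sm\{a\}$, and order preservation on each triple $\{a,b_i,b_j\}$ forces $\si_d^n$ to be an order-preserving permutation of this linearly ordered set, hence the identity, contradicting $p\ge 2$. (Equivalently: your conclusion that the $b_j$ appear in orbit order around $a$, combined with one more application of Lemma~\ref{l:3.17} to the wrap-around pair $\{\mu_{p-1},\mu_0\}\mapsto\{\mu_0,\mu_1\}$, is already a contradiction.) The entire sibling-crossing program you sketch is unnecessary, and as you yourself note, it is not actually carried out in general.

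Your treatment of the ``in particular'' statement has a genuine gap. You write ``with periodic basis, as the claim implicitly assumes,'' but the claim does \emph{not} assume this: part of the content is that in a finite cone with periodic vertex, every basis point is periodic. The paper proves this separately using backward invariance (condition~(\ref{2}) of Definition~\ref{d:siblinv}): if some $\ol{vx}\in C$ is not periodic, then there is a leaf $\ol{vy}\in\lam$ with $\si_d(\ol{vy})=\ol{vx}$, and iterating produces an infinite backward branch of leaves all sharing the vertex $v$, contradicting finiteness of $C$. Only after establishing periodicity does the first claim give equal periods. You need to supply this step.
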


\begin{proof}
Let $\ol{ab}$ be a leaf of $\lam$ with periodic endpoints. If $a$ is of
period $n$ while $b$ is period $m>n$, consider $\si_d^n|_C$. Then the
$\si_d^n$-orbit of $\ol{ab}$ is a finite cone with $\si_d^n$-fixed
vertex $a$ which consists of more than one leaf such that all its
leaves share the endpoint $a$ and are cyclically permuted. Since by
Lemma~\ref{l:3.17} the circular order in the basis of a cone is
preserved under $\si^n_d$, we obtain a contradiction. This proves the
first claim of the lemma.

To prove the second, suppose that $C$ is a finite cone of $\lam$ with a
fixed vertex $v$ that has a non-periodic leaf $\ol{vx}$. By definition
of a sibling $\si_d$-invariant geolamination, there is a leaf $\ol{vy}$
with $\si_d(\ol{vy})=\ol{vx}$. If we now pull back the leaf $\ol{vy}$,
and then keep pulling back this leaf, we will in the end obtain a
branch of the backward orbit of $\ol{vx}$ consisting of countably many
leaves with all these leaves sharing the same endpoint $a$. This
implies that $C$ must be infinite, a contradiction.
\end{proof}

Let us now study infinite cones with periodic vertex.
We write $a_1<a_2<\cdots< a_k$ for points $a_1$, $a_2$, $\dots$, $a_k$
of the unit circle if they appear in the given order under a
counterclockwise (positive) circuit.

\begin{lem}\label{l:inficon1}
Let $\lam$ be a sibling $\si_d$-invariant geolamination. Let $C$ be
an infinite cone of $\lam$ with periodic vertex $v$ of period $n$.
Then all leaves in $C$ are either (pre)critical or (pre)periodic,
and $C$ has the following properties.

\begin{enumerate}

\item There are finitely many leaves $\ol{va_1}, \dots, \ol{va_k}$
    in $C$ such that $v=a_0<a_1<\dots<a_k<v=a_{k+1}$ and
    $\si_d^n(a_i)=a_i$ for each $i$.

\item For each $i$ the set $C'\cap (a_i, a_{i+1})$ is either empty
    or countable.

\item If for some $i$, $C'\cap (a_i, a_{i+1})\ne \0$, then all
    points of $C'\cap (a_i, a_{i+1})$ have $\si_d^n$-preimages in $C'\cap (a_i,
    a_{i+1})$, no preimages elsewhere in $C'$, and $\si_d^n$-images in $[a_i,
    a_{i+1}]\cup\{v\}$.
\end{enumerate}

\end{lem}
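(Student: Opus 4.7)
The plan is to reduce to the case $n=1$ (so that $f=\si_d$ fixes $v$) and then to analyze the dynamics of $f$ on the basis $C'$. Part (1) is immediate: the leaves of $C$ fixed by $f$ are exactly the chords $\ol{va}$ with $a\in C'\setminus\{v\}$ a fixed point of $f$, and $f$ has at most $d-1$ such fixed points on $\uc$; listing them in positive cyclic order yields $v=a_0<a_1<\dots<a_k<a_{k+1}=v$.

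For the image assertion of (3), fix $a\in C'\cap(a_i,a_{i+1})$ with $\ol{va}$ neither critical nor a fixed leaf, form the cone $S=\{\ol{va_i},\ol{va},\ol{va_{i+1}}\}\subset\lam^+$, and apply Lemma~\ref{l:3.17} to $S$ and $T=f(S)=\{\ol{va_i},\ol{vf(a)},\ol{va_{i+1}}\}$: since $f|_S$ is one-to-one and $T$ is again a cone of three non-degenerate leaves at $v$, the cyclic orders on $S'$ and $T'$ must match under the natural bijection, forcing $f(a)\in(a_i,a_{i+1})$. The remaining possibilities $f(a)\in\{v,a_i,a_{i+1}\}$ lie in $[a_i,a_{i+1}]\cup\{v\}$ automatically. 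The ``no $C'$-preimages outside $(a_i,a_{i+1})$'' statement of (3) is then immediate, by applying the image constraint to any $C'$-preimage of $a$. For the existence of a preimage inside $C'\cap(a_i,a_{i+1})$, I invoke sibling invariance: $\ol{va}$ admits $d$ pairwise disjoint preimage leaves in $\lam$ whose endpoints realize a bijection between the $d$ preimages of $v$ and the $d$ preimages of $a$. Since $v$ is itself a preimage of $v$, exactly one sibling has the form $\ol{vb}$ with $f(b)=a$; then $\ol{vb}\in C$, so $b\in C'$, and $b\in(a_i,a_{i+1})$ by the image constraint already established. This completes (3).

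It remains to show that every leaf of $C$ is (pre)critical or (pre)periodic; granting this, (2) is automatic, since the set of (pre)critical and (pre)periodic points of $f$ is countable. I introduce the subset $K_{\text{nc}}\subseteq C'\cap[a_i,a_{i+1}]$ of points whose entire forward $f$-orbit avoids $v$, and use Lemma~\ref{l:3.17} together with (3) to show that $f|_{K_{\text{nc}}}$ is a strictly monotone bijection $K_{\text{nc}}\to K_{\text{nc}}$ fixing $a_i$ and $a_{i+1}$: monotonicity comes from the preservation of cyclic order on all $3$-subcones; surjectivity from the preimage part of (3) combined with the observation that a preimage of a non-critical orbit is itself non-critical; and injectivity from strict monotonicity. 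Moreover, by Lemma~\ref{l:finicon}, this bijection has no periodic point in $K_{\text{nc}}\cap(a_i,a_{i+1})$, because any periodic endpoint of a leaf at $v$ must share the period $1$ of $v$. The main obstacle, and the step I expect to be hardest, is to combine these monotone/bijective constraints with the factor-$d$ expansion of $f$ and the confinement of orbits to the bounded arc $[a_i,a_{i+1}]$ to force $K_{\text{nc}}\cap(a_i,a_{i+1})=\emptyset$, thereby ruling out the irrational-rotation-type Cantor dynamics on $K_{\text{nc}}$ that the purely combinatorial constraints would otherwise allow. Once this is done, every orbit in $C'\cap(a_i,a_{i+1})$ reaches $\{v,a_i,a_{i+1}\}$ after finitely many steps, so every leaf of $C$ is (pre)critical or (pre)periodic, and (2) follows.
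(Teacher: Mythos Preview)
Your treatment of (1) and (3) is essentially the paper's argument, carried out with a bit more care on the preimage existence. The genuine gap is exactly where you flag it: the passage from your setup on $K_{\text{nc}}$ to the conclusion $K_{\text{nc}}\cap(a_i,a_{i+1})=\emptyset$.

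The missing idea is short, and your worry about ``irrational-rotation-type Cantor dynamics'' is misplaced. You have already shown that $f=\si_d^n$ is strictly order-preserving on $K_{\text{nc}}\subset[a_i,a_{i+1}]$ and fixes the two endpoints $a_i,a_{i+1}$; this is \emph{interval} dynamics with fixed endpoints, not circle dynamics, so rotation behavior cannot occur. Concretely, take any $a\in K_{\text{nc}}\cap(a_i,a_{i+1})$ and look only at its forward orbit $b_k=f^k(a)$. From $b_0<b_1$ (say) and order preservation you get $b_1<b_2$, hence $b_2<b_3$, and so on: the single orbit $(b_k)$ is monotone in $(a_i,a_{i+1})$ and therefore converges to some $p\in[a_i,a_{i+1}]$. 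By continuity of $\si_d$ and closedness of $\lam$, the limit leaf $\ol{vp}$ lies in $C$ and $f(p)=p$, so $p\in\{a_i,a_{i+1}\}$. But every fixed point of $\si_d^n$ is repelling (local factor $d^n>1$), which contradicts monotone convergence of $(b_k)$ to $p$. Hence no such $a$ exists. This is precisely the paper's one-line argument (``the fact that $\si_d$ is expanding implies there will be three leaves on which circular orientation is not preserved, contradicting Lemma~\ref{l:3.17}''), phrased contrapositively: you need neither the bijectivity of $f|_{K_{\text{nc}}}$ nor any global description of $K_{\text{nc}}$, only the monotonicity of one forward orbit together with the repelling nature of the fixed points.
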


\begin{proof}
Let $v$ be $\si_d$-fixed. If $\ell$ is a leaf of $C$, whose forward orbit
consists of infinitely many non-degenerate leaves, then the fact that
$\si_d$ is expanding implies that there will be three distinct
non-degenerate leaves $\ol{va}, \ol{vb}$ and $\ol{vc}$ in $C$ such
that $\si_d$ does not preserve circular orientation on $\{a, b,
c\}$, a contradiction with Lemma~\ref{l:3.17}. This proves the first
part of the lemma and, hence, (2).

Now, (1) is immediate. To prove (3), assume that $C'\cap (a_i,
a_{i+1})\ne\0$ and choose $y\in C'\cap (a_i, a_{i+1})$. By
properties of sibling $\si_d$-invariant geolaminations $\ol{vy}$ has
a preimage $\ol{vx}$ from the same cone. By the choice of points
$a_i$, $x\ne y$. Moreover, by Lemma~\ref{l:3.17} $x\notin (v, a_i)$
(otherwise the circular order is not preserved on $\{v, x, a_i\}$)
and $x\notin (a_{i+1}, v)$ (otherwise the circular order is not
preserved on $\{v, x, a_{i+1}\}$). Hence $y\in (a_i, a_{i+1})$.
Similarly using Lemma~\ref{l:3.17}, we conclude that $\si_d(y)\in
[a_i, a_{i+1}]$ or $\si_d(y)=v$.
\end{proof}

In fact Lemma~\ref{l:3.17} implies a more detailed description of
the dynamics on sets $C'\cap (a_i, a_{i+1})$, which we prove as a
separate lemma.

\begin{lem}\label{l:inficon2}
Let $\lam$ be a sibling $\si_d$-invariant geolamination. Let $C$ be
an infinite cone of $\lam$ with periodic vertex $v$ of period $n$.
Let $\ol{va_1}, \dots, \ol{va_k}$ be all leaves  in $C$ with
$v=a_0<a_1<\dots<a_k<v=a_{k+1}$ and $\si_d^n(a_i)=a_i$ for each $i$.
If, for some $i$, $C'\cap (a_i, a_{i+1})\ne \0$, then there are the
following cases.

\begin{enumerate}

\item The map $\si_d^n$ moves all points of $C'\cap (a_i, a_{i+1})$
    in the positive direction except for those, which are mapped to
    $v$.

\item The map $\si_d^n$ moves all points of $C'\cap (a_i, a_{i+1})$
    in the negative direction except for those, which are mapped to
    $v$.

\item There exist two points $u, w$ with $a_i<u\le w<a_{i+1}$ such
that $\si_d^n(u)=\si_d^n(w)=v$, $C'\cap (u, w)=\0$, the map
$\si_d^n$ maps all points of $(a_i, u]$ in the positive direction
except for those, which are mapped to $v$, and all points of $(w,
a_{i+1}]$ in the negative direction except for those, which are
mapped to $v$.

\end{enumerate}

\end{lem}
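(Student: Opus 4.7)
The plan is to exploit Lemma \ref{l:3.17} for order preservation in three-leaf cones, together with the expansion of $\si_d^n$ and the collapsing-polygon analysis of Lemma \ref{l:3-11-14}. After passing to $\si_d^n$ I may assume $n=1$, with $v$ fixed by $\si_d$. I fix $i$ with $C'\cap I \ne \emptyset$, where $I=(a_i,a_{i+1})$, and set
\[
V=\{y\in C'\cap I : \si_d(y)=v\},
\]
which by Lemma \ref{l:inficon1}(3) sits inside $C'\cap I$ and contains no fixed points. The first step is to establish order preservation on non-critical pairs: if $y_1<y_2$ lie in $(C'\cap I)\setminus V$ with $\si_d(y_1),\si_d(y_2)\in I$ distinct and both different from $a_i,a_{i+1}$, then $\si_d(y_1)<\si_d(y_2)$ in $I$. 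This follows by applying Lemma \ref{l:3.17} to the three-leaf preimage cone $\{\ol{va_i},\ol{vy_1},\ol{vy_2}\}$, whose image cone $\{\ol{va_i},\ol{v\si_d(y_1)},\ol{v\si_d(y_2)}\}$ consists of three distinct non-degenerate leaves.

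A direct computation with $\si_d(x)=dx\bmod 1$ in neighborhoods of the fixed points $a_i$ and $a_{i+1}$ shows that points of $C'\cap I$ sufficiently close to $a_i$ move positively while points close to $a_{i+1}$ move negatively. Combining order preservation with the expansion of $\si_d$, I would argue that if $y_1<y_2$ lie in $(C'\cap I)\setminus V$ with $y_1$ moving positively and $y_2$ moving negatively, then some point of $V$ must lie strictly between them. Otherwise order preservation gives $y_1<\si_d(y_1)\le \si_d(y_2)<y_2$, so the arc $[\si_d(y_1),\si_d(y_2)]$ is strictly shorter than $[y_1,y_2]$, while the expansion forces the image of a short arc not meeting $v$ to be exactly $d$ times longer, a contradiction. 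Consequently, if $V=\emptyset$ the direction of motion on $C'\cap I$ is uniform, which yields case (1) or case (2) of the lemma.

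If instead $V\ne\emptyset$ and motion is not uniform, set $u=\min V$ and $w=\max V$, so $a_i<u\le w<a_{i+1}$ and $\si_d(u)=\si_d(w)=v$. The previous paragraph applied separately to the sub-arcs $(a_i,u]$ and $[w,a_{i+1})$, where $V$ has no points strictly in the interior, yields the positive-motion conclusion on $(a_i,u]\cap C'$ and the negative-motion conclusion on $[w,a_{i+1})\cap C'$ demanded by case (3). The remaining assertion $C'\cap(u,w)=\emptyset$ I would obtain from Lemma \ref{l:3-11-14}: although the common image of $\ol{vu}$ and $\ol{vw}$ is the point $v$, the chord $\ol{uw}$ is itself critical, and the concatenation $\ol{vu}\cup\ol{uw}\cup\ol{wv}$ collapses to $v$; it is therefore contained in a maximal collapsing polygon of $\lam$ whose vertices lie in $\si_d^{-1}(v)$. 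For any $y\in(u,w)$ the leaf $\ol{vy}$ would have to cross an edge of this polygon or coincide with one of its edges, either violating unlinkedness of $\lam$ or forcing $y\in V$ and contradicting the choice of $u$ and $w$.

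The main obstacle is the rigorous derivation of $C'\cap(u,w)=\emptyset$, since Lemma \ref{l:3-11-14} as stated requires the common image to be a non-degenerate leaf, whereas here it is the point $v$. I expect the workaround is to adapt the pullback construction underlying Lemma \ref{l:3-11-14} directly to the critical case, or equivalently to invoke sibling invariance of $\lam$ and the observation that $\si_d^{-1}(v)$ spans a Thurston-type collapsing polygon whose edges must be unlinked with every leaf of $\lam$.
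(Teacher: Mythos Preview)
Your argument has two genuine gaps that the paper's approach avoids.

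\emph{First gap: the separation argument.}  Your expansion argument for ``if $y_1$ moves positively and $y_2$ moves negatively then some point of $V$ lies in $(y_1,y_2)$'' does not work.  The inequality $y_1<\si_d(y_1)\le\si_d(y_2)<y_2$ follows from Lemma~\ref{l:3.17} regardless of whether $V$ meets $(y_1,y_2)$, so there is nothing to contradict.  What the length comparison actually yields is that the arc $\si_d([y_1,y_2])$ must wrap around and hence contain $v$; this only produces a point $p\in(y_1,y_2)$ with $\si_d(p)=v$, and there is no reason for $\ol{vp}$ to be a leaf of $\lam$, i.e., no reason for $p\in C'$.  So you do not get $p\in V$.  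The paper handles the separation differently: given a positive mover $x$ and a negative mover $s$, it pulls each back inside $C'\cap I$ (Lemma~\ref{l:inficon1}(3) guarantees these pullbacks exist in $C'$), obtaining monotone sequences $x_r\searrow a_i$ and $s_j\nearrow a_{i+1}$.  For large $r,j$ one has $a_i<x_r<s_j<a_{i+1}$, and then repeated application of Lemma~\ref{l:3.17} to the triples $\{a_i,x_r,s_j\}$ pushes this order forward to $x<s$ and even $\si_d(x)\le\si_d(s)$.  The key advantage is that backward iteration stays inside $C'$, whereas your forward expansion argument leaves $C'$.

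\emph{Second gap: the choice of $u,w$ and the emptiness of $C'\cap(u,w)$.}  Setting $u=\min V$, $w=\max V$ is wrong: for $d\ge 4$ the set $V$ may contain three or more points, and then any intermediate one lies in $C'\cap(u,w)$.  The statement of case~(3) explicitly allows points of $V$ in $(a_i,u]$ and $[w,a_{i+1})$, so $u$ and $w$ are \emph{not} the extreme elements of $V$.  The paper instead takes the greatest positive mover $x'$ and the least negative mover $s'$ and sets $u=\si_d(x')$, $w=\si_d(s')$; maximality of $x'$ forces $\si_d(u)=v$ (if $u$ moved positively it would exceed $x'$, if negatively it would exceed $s'$), and similarly for $w$.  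Your attempt to recover $C'\cap(u,w)=\0$ via Lemma~\ref{l:3-11-14} is, as you note, outside the scope of that lemma since the common image is degenerate, and even a ``collapsing polygon on $\si_d^{-1}(v)$'' would have the intermediate points of $V$ as vertices rather than excluding them.

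In short, replace the forward expansion step by the paper's backward pullback argument, and define $u,w$ via the extremal movers rather than via $\min V,\max V$; then the collapsing--polygon machinery is not needed at all.
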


\begin{proof}
We may assume that $n=1$. Assume that neither case (1) nor case (2)
holds. Then there are points $x, y\in (a_i, a_{i+1})$ such that
$a_i<x<\si_d(x)=y<a_{i+1}$ and $s, t\in (a_i, a_{i+1})$ such that
$a_i<t=\si_d(s)<s<a_{i+1}$. Take the first pullback $\ol{vx_1}$ of
$\ol{vx}$ in $C$. By Lemma~\ref{l:3.17}, $a_i<x_1<x$. Repeating this
construction, we will find a sequence of leaves $\ol{vx_r}$ of $C$,
which are consecutive pullbacks of $\ol{vx}$ converging to
$\ol{va_i}$ in a ``monotonically decreasing'' fashion. Similarly, we
can find a sequence of leaves $\ol{vs_j}$ of $C$, which are
consecutive pullbacks of $\ol{vs}$ converging to $\ol{va_{i+1}}$ in
a ``monotonically increasing'' fashion.

Applying Lemma~\ref{l:3.17} to pairs of leaves $\ol{vx_r},
\ol{vs_j}$ we see that since for large $r, j$ we have
$a_i<x_r<s_j<a_{i+1}$, then in fact $x<y\le t<s$. Now, take the
greatest (in the sense of the positive order on $[a_i, a_{i+1}]$)
point $x'$ of $C'$, which maps in the positive direction by $\si_d$
to the point $y'=\si_d(x')\in [a_i, a_{i+1}]$ (clearly, $x'$ is
well-defined). Then take the smallest (in the sense of the positive
order on $[a_i, a_{i+1}]$) point $s'$ of $C'$, which maps in the
negative direction by $\si_d$ to the point $t'=\si_d(s')$. By the
above $x'<y'\le t'<s'$. By the choice of $x'$ the $\si_d$-image of
$y'$ is $v$; similarly, $\si_d(t')=v$.
\end{proof}

Observe that Lemmas~\ref{l:finicon} - \ref{l:inficon2} are proven
for all sibling $\si_d$-invariant geolaminations. In the case of
limits of $\si_d$-invariant q-geolaminations we can specify these
results. First we consider finite cones.

\begin{lem}~\label{l:finiconlim}
Let $\lam$ belong to the closure of the set of $\si_d$-invariant
q-geolaminations. Then a finite cone $C$ of $\lam$ with periodic
vertex consists of no more than two leaves.
\end{lem}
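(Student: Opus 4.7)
The plan is to show that each leaf of $C$ is \emph{rigid} in the sense of Definition~\ref{d:rigileaf}, so it appears in every nearby q-geolamination; since a q-geolamination can have at most two leaves at any vertex, this forces $|C|\le 2$.

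Write $C=\{\ol{va_1},\ldots,\ol{va_k}\}$ and suppose for contradiction that $k\ge 3$. By Lemma~\ref{l:finicon}, every basis point $a_j$ has the same period as $v$, so every $\ol{va_j}$ is a periodic leaf of $\lam$ of period dividing $n:=\mathrm{per}(v)$. I now claim each $\ol{va_j}$ is rigid. If $\ol{va_j}$ is not an edge of any gap of $\lam$, Lemma~\ref{l:limleaf2} applies directly. Otherwise, $\ol{va_j}$ is an edge of some gap $G$ of $\lam$. At most two gaps of $\lam$ can be adjacent to a given leaf (one on each side), so the $\si_d^n$-orbit of $G$ lives inside that two-element set; since $\si_d^n(\ol{va_j})=\ol{va_j}$, this orbit must contain a periodic gap $G'$ still adjacent to $\ol{va_j}$. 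Lemma~\ref{l:limgap1.5}(1), applied to the periodic gap $G'$ and its periodic edge $\ol{va_j}$, then says that for every q-geolamination $\hlam$ sufficiently close to $\lam$, $\ol{va_j}$ is an edge of $G'(\hlam)$; in particular $\ol{va_j}\in\hlam$. Thus $\ol{va_j}$ is rigid.

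Since $\lam\in\ol{\laq_d}$, pick any sequence $\lam_i\to\lam$ with $\lam_i\in\laq_d$. By rigidity of each $\ol{va_j}$ there is some $N_j$ such that $\ol{va_j}\in\lam_i$ for all $i\ge N_j$; taking $i\ge\max_j N_j$, all $k$ leaves of $C$ are simultaneously leaves of $\lam_i$ at the vertex $v$. But $\lam_i$ is generated by a lamination $\sim_i$, and $v$ is a vertex of the convex polygon $\ch([v]_{\sim_i})$, which contributes at most two edges at $v$. Hence $\lam_i$ has at most two leaves incident to $v$, contradicting $k\ge 3$, so $|C|\le 2$.

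The main obstacle is the rigidity of $\ol{va_j}$ when it is an edge of a gap: the argument requires one to locate a periodic gap (not merely a preperiodic one) with $\ol{va_j}$ as an edge, which is achieved by the adjacency-counting observation that any leaf has at most two adjacent gaps, and to then quote Lemma~\ref{l:limgap1.5}(1) correctly. Once this is in place, the remainder of the proof is a clean combination of rigidity with the defining property of q-geolaminations.
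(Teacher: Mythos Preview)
Your approach coincides with the paper's: prove that three leaves of $C$ are rigid, then use that a q-geolamination carries at most two leaves at any vertex. There is, however, a hole precisely at the step you call the main obstacle. You assert that the $\si_d^n$-orbit of the chosen gap $G$ lives in the set of gaps adjacent to $\ol{va_j}$, but gap invariance only tells you that $\si_d^n(G)$ is a point, a leaf, or a gap; you have not excluded the possibility that $\si_d^n(G)$ collapses to the leaf $\ol{va_j}$ itself (in which case there is no periodic gap in the orbit and Lemma~\ref{l:limgap1.5}(1) cannot be invoked).

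The fix is short. If $G$ is infinite, $\si_d^n(G')$ is infinite since $\si_d$ is finite-to-one, so no collapse occurs. If $G$ is finite, then at the vertex $v$ the polygon $G$ has a \emph{second} edge $\ol{va_i}$, which is again a leaf of the finite cone $C$ and hence, by Lemma~\ref{l:finicon}, periodic of period $n$; thus $\si_d^n(G')$ contains the three distinct fixed points $v,a_j,a_i$ and cannot be a leaf. In either case $\si_d^n(G)$ is a gap adjacent to $\ol{va_j}$ (indeed equal to $G$, since it is the unique gap meeting $v$ between those two cone leaves), and your argument goes through. The paper sidesteps this by choosing at the outset three \emph{consecutive} cone leaves $\ol{vx},\ol{vy},\ol{vz}$: then the two gaps between them visibly have two periodic edges apiece, so they are $\si_d^n$-invariant for the same three-fixed-vertex reason, and Lemma~\ref{l:limgap3} gives rigidity directly.
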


\begin{proof}
Suppose otherwise. Then we may find three leaves $\ol{vx}, \ol{vy},
\ol{vz}$ in $C$ with $x<y<z$ each of which is periodic and such that
there are no leaves of $\lam$ separating any two of these leaves in
$\cdisk$. Hence there are two periodic gaps $G$ and $H$, which have
$\ol{vx}, \ol{vy}$ and $\ol{vy}, \ol{vz}$ as their edges,
respectively. By Lemma~\ref{l:limgap3} all these leaves are rigid.
Hence there exists a $\si_d$-invariant q-geolamination $\lam_q$,
which is sufficiently close to $\lam$ and such that $\ol{vx},
\ol{vy}, \ol{vz}$ are leaves of $\lam_q$, a contradiction (clearly,
a $\si_d$-invariant q-geolamination cannot have three leaves with
the same vertex as all its leaves are edges of convex hulls of
equivalence classes).
\end{proof}

Let us now consider infinite cones.

\begin{lem}~\label{l:inficonlim}
Let $\lam$ belong to the closure of the set of $\si_d$-invariant
q-geolaminations. Let $C$ be an infinite cone of $\lam$ with
periodic vertex $v$ of period $n$. Let $\ol{va_1}, \dots, \ol{va_k}$
be all leaves  in $C$ with $v=a_0<a_1<\dots<a_k<v=a_{k+1}$ and
$\si_d^n(a_i)=a_i$ for each $i$. If, for some $i$, $C'\cap (a_i,
a_{i+1})\ne \0$, then there are the following cases.

\begin{enumerate}

\item The map $\si_d^n$ moves all points of $C'\cap (a_i, a_{i+1})$
    in the positive direction except for those, which are mapped to
    $v$.

\item The map $\si_d^n$ moves all points of $C'\cap (a_i, a_{i+1})$
    in the negative direction except for those, which are mapped to
    $v$.

\end{enumerate}

Moreover, for some $0\le r\le k+1$, the map $\si_d$ maps points of
$C'\cap (a_i, a_{i+1})$ in the negative direction for any $0\le i\le
r-1$ and in the positive direction for any $r\le i\le k-1$.
\end{lem}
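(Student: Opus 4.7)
The plan is to apply Lemma~\ref{l:inficon2} to $C$ to obtain the trichotomy (1)--(2)--(3), then to rule out case (3) using rigidity of pullback leaves, and finally to derive the ``moreover'' monotonicity from Lemma~\ref{l:3.17} together with the locally expanding behavior of $\si_d^n$ at the fixed point $v$. Since every $\lam\in\ol{\laq_d}$ is sibling $\si_d$-invariant by Theorem~\ref{t:siblinv}, Lemma~\ref{l:inficon2} applies and gives cases (1), (2), or (3) in each sector $(a_i,a_{i+1})$ meeting $C'$.

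To eliminate case (3), suppose for contradiction it holds in some sector $(a_i,a_{i+1})$, so that there exist $u,w$ with $a_i<u\le w<a_{i+1}$, $\si_d^n(u)=\si_d^n(w)=v$, and the opposite directional behaviors on the two sides. As in the proof of Lemma~\ref{l:inficon2}, iterated $\si_d^n$-pullback of $\ol{vu}$ inside $C\cap (a_i,u]$ yields an infinite sequence $\ol{vx_r}\to \ol{va_i}$ of cone leaves. Now $\ol{va_i}$ is a periodic edge of the finite cone-gap of $\lam$ bounded on the other side at $v$ by $\ol{va_{i-1}}$ (or by $\ol{va_k}$ if $i=1$), hence rigid by Lemma~\ref{l:limgap3}; inductively applying Lemma~\ref{l:rigipul} along the pullback chain, every $\ol{vx_r}$ is also rigid. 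Choose three distinct indices $r_1,r_2,r_3$: there is a neighborhood $U$ of $\lam$ in which every q-geolamination contains each of $\ol{vx_{r_1}},\ol{vx_{r_2}},\ol{vx_{r_3}}$ as a leaf, and since $\lam\in\ol{\laq_d}$ we may pick $\lam_q\in U\cap\laq_d$. Then $\lam_q$ has three distinct leaves sharing the endpoint $v$, which is impossible because in any q-geolamination the leaves through a point of $\uc$ are the at most two edges of the convex hull of the finite $\sim_q$-class at that point adjacent to it. This rules out case (3).

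For the ``moreover'' assertion, each sector with nonempty $C'$-intersection now has a single direction of $\si_d^n$-motion. Suppose for contradiction that the pattern of directions along the sectors $(a_0,a_1),\ldots,(a_k,a_{k+1})$ switches more than once. Then one locates an interior fixed point $a_i$ whose two adjacent sectors exhibit a switch forbidden by Lemma~\ref{l:3.17}: choosing cone leaves $\ol{vy},\ol{vz}$ just on either side of $\ol{va_i}$ and their $\si_d^n$-pullbacks inside $C$ yields two three-leaf cones at $v$, one of which is a $\si_d^n$-preimage of the other with reversed circular order at the vertex $v$, a contradiction. Since the local expansion of $\si_d^n$ at the fixed point $v$ forces the sectors immediately adjacent to $v$ to move away from $v$ in opposite circular directions, there is exactly one threshold index $r\in\{0,\ldots,k+1\}$ at which the switch occurs, yielding the statement. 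The main obstacle I foresee is the rigidity step above: verifying that no leaf in the backward orbit of $\ol{vu}$ lies in a collapsing polygon of $\lam$ (the hypothesis of Lemma~\ref{l:rigipul}) may require a separate case analysis, in which case one bypasses Lemma~\ref{l:rigipul} and produces three rigid leaves at $v$ directly via Lemma~\ref{l:limgap1.5}(2) applied to the critical edges of the presumed collapsing polygon.
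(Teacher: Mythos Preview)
Your elimination of case (3) has a genuine gap. You argue that $\ol{va_i}$ is rigid and then write ``inductively applying Lemma~\ref{l:rigipul} along the pullback chain, every $\ol{vx_r}$ is also rigid.'' But Lemma~\ref{l:rigipul} transfers rigidity \emph{backward along the forward orbit}: to conclude that $\ol{vx_r}$ is rigid you would need some forward $\si_d$-image of $\ol{vx_r}$ to be a rigid non-degenerate leaf. The forward orbit of $\ol{vx_r}$ under $\si_d^n$ runs through $\ol{vx_{r-1}},\dots,\ol{vx_1},\ol{vu}$ and then collapses to the point $v$; it never reaches $\ol{va_i}$. (The leaves $\ol{vx_r}$ \emph{converge} to $\ol{va_i}$ as $r\to\infty$, which is a completely different relation.) So the induction has no valid base: you would need rigidity of $\ol{vu}$, which you have not established and which is not covered by the collapsing-polygon workaround you mention. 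Incidentally, the claim that $\ol{va_{i-1}}$ and $\ol{va_i}$ bound a ``finite cone-gap'' is also unjustified: the sector $(a_{i-1},a_i)$ may well meet $C'$, and other leaves of $\lam$ may live there too.

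The paper's argument is quite different and avoids rigidity entirely. One takes a cone leaf $\ol{vx}$ with $x\in(a_i,a_{i+1})$ close to $a_i$ (so $x$ moves positively) and approximates it by a leaf $\ol{v'x'}$ of a nearby q-geolamination. Since a q-geolamination cannot have a non-periodic leaf with a periodic endpoint, $v'\ne v$; and by Lemma~\ref{l:limleaf1} the leaf $\ol{v'x'}$ must be disjoint from the periodic leaf $\ol{va_i}$, which together with the positive motion of $x$ forces $v'$ to lie on one specific side of $v$. Running the same argument with $y$ near $a_{i+1}$ (moving negatively in case (3)) forces the approximating endpoint $v''$ to the opposite side of $v$, so the two approximating leaves cross. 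The ``moreover'' clause is proved by the same device: once one sector moves positively, the side-of-$v$ constraint on approximating endpoints propagates to all later sectors and forces positive motion there as well. Your sketch of the ``moreover'' part via Lemma~\ref{l:3.17} and local expansion at $v$ is too vague as written; in particular, a positive-then-negative switch at some $a_j$ does \emph{not} obviously reverse circular order on any three-leaf cone, so you would need a sharper argument there.
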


\begin{proof}
We may assume that $n=1$. First, we claim that case (3) from
Lemma~\ref{l:inficon2} never holds. Indeed, suppose case (3) from
Lemma~\ref{l:inficon2} holds for some $i$. Choose a leaf $\ol{vx}$
of $\lam$ very close to $\ol{va_i}$. There are leaves $\ol{v'x'}$ of
$\si_d$-invariant q-geolaminations very close to $\ol{vx}$ so that
$v'\approx v$ and $x'\approx x$. Since $\si_d$-invariant
q-geolaminations cannot have leaves with periodic endpoints, which
are not periodic, $v'\ne v$. By Lemma~\ref{l:limleaf1}, $\ol{v'x'}$
is disjoint from $\ol{va_i}$. Since $x$ maps in the positive
direction in $(a_i, a_{i+1})$, then $a_i<v'<v$. However, similar
arguments applied to leaves $\ol{vy}$ with $y\in (a_i, a_{i+1}),
y\approx a_{i+1}$ show that $\lam_q$ will have leaves $\ol{v''y''}$
with endpoints $v''\approx v, v<v''<a_i$ and $y''\approx y$.
Clearly, such leaves $\ol{v'x'}$ and $\ol{v''y''}$ will cross, a
contradiction.

The proof of the last claim is similar to the above. Suppose that for
some $i$ the map $\si_d$ moves points of $C'\cap (a_i, a_{i+1})$ in the
positive direction. Then by the previous paragraph all
$\si_d$-invariant q-geolaminations $\lam_q$ have leaves $\ol{v't}$ with
$v'\approx v$ being such that $a_i<v'<v$ and $t\in (a_i, a_{i+1})$
being sufficiently close to $a_i$ so that $a_i<t<\si_d(t)<a_{i+1}$.
This implies that, for any $j>i$ with $C'\cap (a_j, a_{j+1})$, the
leaves $\ol{v''h}$ of $\lam_q$, which are very close to leaves of $C$
connecting $v$ and points in $(a_j, a_{j+1})$ must also have an
endpoint $v''\approx v$ with $a_j<v''<v$ (as otherwise these leaves
would cross leaves $\ol{v't}$ described above). This implies that the
endpoint $h$ of $\ol{v''h}$ is mapped in the positive direction by
$\si_d$ as otherwise $\si_d(\ol{v''h})$ will cross $\ol{v''h}$. Since
leaves $\ol{v''h}$ approximate leaves of $\lam$ this in turn  implies
that points of $C'\cap (a_j, a_{j+1})$ are mapped by $\si_d$ in the
positive direction. This completes the proof.
\end{proof}

\section{The Mandelbrot set as the quotient of the space of
quadratic limit geolaminations}\label{s:mandelquot}

We begin with characterization of limits of q-geolaminations in the
quadratic case. We give an explicit description of geolaminations from
$\olq$. It turns out that each such geolamination $\lam$ can be
described as a specific modification of an appropriate q-geolamination
$\lam^q$ from $\laq_2$. The full statement depends on the kind of
q-geolamination $\lam^q$ involved. For brevity we introduce a few
useful concepts below.

\begin{dfn} By a \emph{generalized critical quadrilateral}
$Q$ we mean either a 4-gon whose $\si_2$-image is a leaf, or a
critical leaf (whose image is a point). A \emph{collapsing \ql} is a
generalized critical \ql{} with four distinct vertices.
\end{dfn}

The notion of generalized critical quadrilateral was used in
\cite{bopt14} where we study cubic (geo)laminations, in particular
those of them, which have generalized critical quadrilaterals as their
critical sets.

\begin{dfn}\label{d:coexist}
Two geolaminations \emph{coexist} if their leaves do not cross.
\end{dfn}

This notion was used in \cite{bopt13}. Observe that, if two
geolaminations coexist, then a leaf of one geolamination is either also
a leaf of the other lamination or is located in a gap of the other
geolamination, and vice versa.

\begin{dfn}\label{d:hyperb}
A $\si_2$-invariant geolamination is called \emph{hyperbolic} if it has
a periodic Fatou gap of degree two.
\end{dfn}

Clearly, if a $\si_2$-invariant geolamination $\lam$ has a periodic
Fatou gap $U$ of period $n$ and of degree greater than one, then the
degree of $G$ is two. By \cite{thu85}, there is a unique edge
$M(\lam)$ of $U$ that also has period $n$. In fact this edge and
its sibling $M'(\lam)$ are the two \emph{majors} of $\lam$ while
$\si_2(M(\lam))=\si_2(M'(\lam))=m(\lam)$ is the \emph{minor} of
$\lam$ \cite{thu85} (recall that a \emph{major} of a
$\si_2$-invariant
geolamination is the longest leaf of $\lam$). 
Any $\si_2$-invariant hyperbolic geolamination $\lam$ is actually a
q-geolamination $\lam_\sim$ corresponding to the appropriate
\emph{hyperbolic $\si_2$-invariant lamination} $\sim$ so that the
topological polynomial $f_\sim$ considered on the entire complex plane
is conjugate to a hyperbolic complex quadratic \emph{hyperbolic}
polynomial; this justifies our terminology.

\begin{dfn}\label{d:criset}
A \emph{critical set} $\cri(\lam)$ of a $\si_2$-invariant geolamination
$\lam$ is either a critical leaf or a gap $G$ such that $\si_2|_G$ has
degree greater than one.
\end{dfn}

A $\si_2$-invariant q-geolamination $\lam$ either has a finite critical
set (a critical leaf, or a finite critical gap) or is hyperbolic.
In both cases, a critical set is unique.
Lemma~\ref{l:critidet} shows that critical sets
are important. This lemma easily follows from results in \cite{thu85};
for completeness we sketch a proof.

\begin{lem}[\cite{thu85}]\label{l:critidet}
Suppose that $\lam$ and $\lam'$ are $\si_2$-invariant geolaminations
such that $\cri(\lam)=\cri(\lam)$ and one of the following holds:

\begin{enumerate}

\item $\cri(\lam)$ has no periodic points;

\item $\cri(\lam)$ has more than two points;

\item $\cri(\lam)=\ol{c}$ is a critical leaf with a periodic
    endpoint and there are two gaps $G$ and $\widehat G$, which
    share $\ol{c}$ as their common edge such that $G$ and $\widehat
    G$ are gaps of both $\lam$ and $\lam'$.
\end{enumerate}

Then $\lam=\lam'$.

\end{lem}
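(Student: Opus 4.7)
The plan is to reconstruct both $\lam$ and $\lam'$ from the common critical set $Q = \cri(\lam) = \cri(\lam')$ by an iterative procedure that combines forward iteration by $\si_2$ with sibling pullbacks, and to check that this procedure depends only on $Q$ together with the extra data in case~(3). Once the procedure is seen to produce the same set of leaves in either geolamination, closedness of $\lam^+$ and $(\lam')^+$ forces $\lam = \lam'$.

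The first step is to establish the quadratic pullback principle: for any non-degenerate leaf $\ell'$ of a $\si_2$-invariant geolamination, the full $\si_2$-preimage of $\ell'$ consists of exactly two disjoint chords (its siblings), and these two chords are determined as abstract chords purely by $\ell'$ and $Q$. Indeed, the four $\si_2$-preimages of the endpoints of $\ell'$ split into two pairs according to which of the two components of $\ol{\disk}\setminus Q^+$ they lie in (with $Q^+$ the solid of $Q$, that is, either $\bd(Q)$ if $Q$ is a gap or $Q$ itself if $Q$ is a critical leaf), and each pair spans one sibling. So if $\ell'$ already lies in $\lam \cap \lam'$, then both of its $\si_2$-preimages automatically lie in $\lam \cap \lam'$ as well, as they are the unique chords forced by $\ell'$ and $Q$.

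The second step is to identify an initial common set $\mathcal{S}_0\subset \lam\cap\lam'$ and close it under forward iterates of $\si_2$ and sibling pullbacks (the latter defined using $Q$ as above). In case~(2), $Q$ is either a degree-two gap with at least three vertices or a collapsing quadrilateral with four distinct vertices; its edges are leaves of both $\lam$ and $\lam'$, so set $\mathcal{S}_0$ to be the edges of $Q$. In case~(1), take $\mathcal{S}_0 = \{\ol{c}\}$; since the endpoints of $\ol{c}$ are non-periodic, the forward orbit of $\ol{c}$ is an infinite set of distinct leaves avoiding $Q$, and each subsequent pullback is unambiguous. In case~(3), take $\mathcal{S}_0 = \{\ol{c}\}\cup\{\text{edges of } G\}\cup\{\text{edges of } \widehat G\}$; here the assumption gives $\mathcal{S}_0\subset\lam\cap\lam'$ directly. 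Letting $\widetilde{\mathcal{S}}$ denote the closure of $\mathcal{S}_0$ under forward iteration and sibling pullback, step one shows $\widetilde{\mathcal{S}}\subset \lam\cap\lam'$. One then checks that $\widetilde{\mathcal{S}}$ is dense in $\lam$: any leaf $\ell$ of $\lam$ has its endpoints approximated, via the expansion of $\si_2$ and the invariance of $\lam$, by endpoints of chords in $\widetilde{\mathcal{S}}$, yielding $\ell\in \ol{\widetilde{\mathcal{S}}^+}=\lam^+$; symmetrically $\widetilde{\mathcal{S}}$ is dense in $\lam'$, so $\lam = \lam'$.

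The hard part is case~(3). A critical leaf $\ol{c}$ with periodic endpoint $p$ does not by itself determine the geolamination: near $p$ one could attach a Siegel-type infinite cone, a finite polygon with vertex $p$, or nothing at all, each giving a distinct $\si_2$-invariant geolamination whose critical set is exactly $\ol{c}$. Specifying that $\lam$ and $\lam'$ share the same two gaps $G$ and $\widehat G$ abutting $\ol{c}$ removes precisely this ambiguity by pinning down the local structure on both sides of $\ol{c}$; once these two gaps are agreed upon, their edges feed the initial set $\mathcal{S}_0$ and the pullback principle propagates the agreement uniquely throughout the disk. Verifying that no residual ambiguity survives this propagation — in particular that the periodic pullbacks along the orbit of $p$ nestle unambiguously against $G\cup\widehat G$ — is the one delicate point that conditions (1) and (2) avoid altogether.
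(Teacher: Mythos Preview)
Your overall strategy---build a common sub-geolamination $\lam''=\ol{\widetilde{\mathcal{S}}}$ from pullbacks of the critical data and then argue $\lam''=\lam=\lam'$---is exactly the paper's approach. The pullback principle you state is correct and is precisely why the pullback collection sits inside $\lam\cap\lam'$.

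The genuine gap is your density step. You assert that ``any leaf $\ell$ of $\lam$ has its endpoints approximated, via the expansion of $\si_2$ and the invariance of $\lam$, by endpoints of chords in $\widetilde{\mathcal{S}}$,'' but this does not follow from expansion alone. The closure $\lam''$ of $\widetilde{\mathcal{S}}$ is a $\si_2$-invariant geolamination with the same critical set, yet a priori $\lam$ could have leaves lying strictly inside a gap of $\lam''$; expansion gives you no mechanism to approximate such a leaf by pullbacks of $Q$. What is actually needed---and what the paper supplies---is a classification of the gaps of $\lam''$: each gap is either a preimage of $\cri(\lam)$ (hence admits no extra leaves of $\lam$ since $\cri(\lam'')=\cri(\lam)$), or eventually maps one-to-one onto a periodic gap. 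For a finite periodic gap the first return map permutes the vertices transitively, so no chord inside can be invariant; for a Siegel gap the first return map has every boundary point with dense orbit, so any interior chord eventually crosses an iterate of itself; and a periodic Fatou gap of degree greater than one is (pre)critical and hence already handled. This gap-by-gap transitivity argument is the actual content of the lemma, and your sketch omits it entirely.

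A minor side remark: in case~(1) the forward orbit of the critical leaf $\ol{c}$ is a sequence of \emph{points} (since $\si_2(\ol{c})$ is degenerate), not of non-degenerate leaves; this does not affect the pullback construction but your description there is inaccurate.
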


\begin{proof} Consider the collection $\lam^*$ of all leaves obtained by pulling
back all leaves from $\cri(\lam)$ (and in case (3) also from $G\cup
\widehat G$). Any $\si_2$-invariant geolamination satisfying (1),
(2) or (3) must contain $\lam^*$. Hence its closure
$\ol{\lam^*}=\lam''$ is contained in both $\lam $ and $\lam'$.
Moreover, by \cite{thu85} $\lam''$ is $\si_2$-invariant, and by our
construction $\cri(\lam'')=\cri(\lam)$ (and, in case (3) contains
$G$). Clearly, every gap of $\lam''$ (except $\cri(\lam)$ if it is a
gap or $G$ and $\widehat G$ in case (3)) either maps one-to-one to
$\cri(\lam)$ or maps one-to-one to a periodic gap.

Since $\cri(\lam'')=\cri(\lam)$, no leaves of $\lam$ or $\lam'$ can
be contained in $\cri(\lam'')=\cri(\lam)=\cri(\lam')$ or its
preimages (in case (3) no leaves can be contained in $G\cup \widehat
G$ or their $\lam''$ preimages). Since the first return map on the
vertices of a finite periodic gap is transitive \cite{thu85} (i.e.,
all its vertices belong form one periodic orbit under the first
return map), no leaves of $\lam$ or $\lam'$ can be contained in a
finite periodic gap of $\lam''$ or its preimages. Otherwise a
periodic gap $H$ of $\lam''$ may be a Siegel gap with exactly one
(pre)critical edge. In this case the first return map on the
boundary of $H$ is also transitive (similar to the case of a finite
periodic gap) in the sense that \emph{any} point of $H\cap \uc$ has
a dense orbit in $H\cap \uc$ under the first return map. Hence the forward orbit of any
chord inside $H$ contains intersecting chords. Thus, as before
we see that no leaves of $\lam$ or $\lam'$ can be contained in $H$
or its preimages. Finally, a periodic gap $U$ of $\lam''$ can be a
Fatou gap of degree greater than one; however in this case $U$ is
(pre)critical, and this case has been covered before.

We conclude that $\lam=\lam'=\lam''$ as desired.
\end{proof}

For convenience we state Corollary~\ref{c:critidet}.

\begin{cor}\label{c:critidet}
If the critical set $\cri(\lam)$ of a $\si_2$-invariant
geolamination $\lam$ is a gap, then the union of gaps of $\lam$ that
are pullbacks of $\cri(\lam)$ is dense in $\lam$.
\end{cor}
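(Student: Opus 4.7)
The plan is to deduce the corollary as a direct consequence of Lemma~\ref{l:critidet}. First, I would let $\lam^*$ denote the collection of chords obtained from the edges of $\cri(\lam)$ by iterated $\si_2$-preimages taken inside $\lam$ (so that every chord of $\lam^*$ is automatically a leaf of $\lam$), and let $\lam''$ be its closure together with all points of $\uc$. Arguing exactly as in the proof of Lemma~\ref{l:critidet}, $\lam''$ is a $\si_2$-invariant geolamination whose solid sits inside $\lam^+$, and because $\cri(\lam)$ is a gap of $\lam$ no leaf of $\lam''\subseteq\lam$ can cross its interior; hence $\cri(\lam'')=\cri(\lam)$.

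Next, since $\cri(\lam)$ is a gap, its basis has more than two points, so hypothesis~(2) of Lemma~\ref{l:critidet} is met by the pair $(\lam,\lam'')$. Applying the lemma yields $\lam=\lam''$. I would then invoke gap invariance in the sense of Thurston (Definition~\ref{d:geolaminv}): for each $n\ge 0$ the $\si_2^{-n}$-preimage of $\cri(\lam)$ in $\lam$ is a finite disjoint union of gaps of $\lam$, each mapping onto $\cri(\lam)$ under $\si_2^n$, whose edges are precisely the $n$-th pullback chords of the edges of $\cri(\lam)$. Taking the union over $n$ identifies the leaves of $\lam^*$ with the edges of the pullback gaps of $\cri(\lam)$, and the equality $\lam=\ol{\lam^*}$ therefore translates to density of these edges, and hence of the pullback gaps themselves, in $\lam^+$.

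The main obstacle I expect is only a bookkeeping step: one must verify that each pullback region of $\cri(\lam)$ is genuinely a gap of $\lam$ rather than a region further subdivided by some extraneous leaf of $\lam$. This is ruled out by the observation that any such extraneous leaf would, after sufficiently many applications of $\si_2$, produce a non-degenerate chord strictly inside $\cri(\lam)$, contradicting the assumption that $\cri(\lam)$ is a gap. With that verification in place, the corollary follows immediately from $\lam=\lam''$.
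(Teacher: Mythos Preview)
Your argument is correct and follows essentially the same route as the paper: define $\lam^*$ as the pullback leaves of the edges of $\cri(\lam)$, take its closure $\lam''$, observe $\cri(\lam'')=\cri(\lam)$ has more than two points, and invoke Lemma~\ref{l:critidet}(2) to conclude $\lam''=\lam$. The paper's proof is terser, omitting the bookkeeping verification that the pullback regions are genuine gaps of $\lam$, but the substance is identical.
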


\begin{proof} Consider the collection $\lam^*$ of all edges of gaps
of $\lam$ which are pullbacks of $\cri(\lam)$. Its closure
$\ol{\lam^*}=\lam''$ is contained in $\lam$ and is itself a
$\si_2$-invariant geolamination. By Lemma~\ref{l:critidet}(2), we have
$\lam''=\lam$, as desired.
\end{proof}

If $\cri(\lam)$ is a generalized critical \ql{}, then
$\si_2(\cri(\lam))=m(\lam)$. Lemma~\ref{l:limits} shows the importance
of sibling $\si_2$-invariant geolaminations with generalized critical
\ql s.

\begin{lem}\label{l:limits}
Suppose that a sequence of pairwise distinct $\si_2$-invariant
q-geolaminations $\lam_i$ converges to a $\si_2$-invariant
geolamination $\lam$. Then $m(\lam_i)$ $\to$ $m(\lam)$ and
$\cri(\lam)=\si_2^{-1}(m(\lam))$, which completely determines the
geolamination $\lam$ except when $m(\lam)$ is a periodic point. In
particular, only sibling $\si_2$-invariant geolaminations with critical
sets that are generalized critical \ql s can be limits of non-stabilizing
sequences of $\si_2$-invariant q-geolaminations while $\si_2$-invariant
geolaminations with critical gaps of more than four vertices are
isolated in $\olq$.
\end{lem}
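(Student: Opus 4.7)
The plan is to proceed in four steps: convergence of minors, identification of the limiting critical set as $\si_2^{-1}(m(\lam))$, application of Lemma~\ref{l:critidet} to obtain the uniqueness assertion, and finally a short argument for the isolation statement.

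First I would show $m(\lam_i) \to m(\lam)$. Select a major $M_i$ of each $\lam_i$; after passing to a subsequence one has $M_i \to M_\infty$ in the Hausdorff metric, and $M_\infty$ is a leaf of $\lam$. Every leaf $\ell$ of $\lam$ is a Hausdorff limit of leaves $\ell_i \in \lam_i$ with $|\ell_i| \le |M_i|$, so $|\ell| \le |M_\infty|$; hence $M_\infty$ is a longest leaf of $\lam$, i.e., a major of $\lam$. Continuity of $\si_2$ on chords then gives $m(\lam_i) = \si_2(M_i) \to \si_2(M_\infty) = m(\lam)$.

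Second I would identify $\cri(\lam)$ with $\si_2^{-1}(m(\lam))$. Each $\cri(\lam_i)$ contains the generalized critical quadrilateral $Q_i$ spanned by the four preimages (or two, in the degenerate case) of the endpoints of $m(\lam_i)$---namely the endpoints of $M_i$ together with their antipodes. By Hausdorff continuity these vertices converge to those of $Q = \ch(\si_2^{-1}(\{\text{endpoints of } m(\lam)\}))$, the edges of $Q_i$ are leaves of $\lam_i$ converging to the edges of $Q$, and any chord with both endpoints on $\uc \cap Q$ must be an edge or a diagonal of $Q$. The two diagonals are interior to $\cri(\lam_i)$ for every $i$, so they do not appear as leaves of $\lam$; hence $Q$ is a gap (or critical leaf) of $\lam$ on which $\si_2$ has degree two, forcing $\cri(\lam) = Q = \si_2^{-1}(m(\lam))$.

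For the third step, Lemma~\ref{l:critidet} completes the uniqueness: when $m(\lam)$ is non-degenerate, $Q$ has four vertices and Lemma~\ref{l:critidet}(2) applies; when $m(\lam)$ is a non-periodic point, $Q$ is a critical leaf with non-periodic endpoints and Lemma~\ref{l:critidet}(1) applies; only the case of a periodic point $m(\lam)$ falls outside the scope of Lemma~\ref{l:critidet}, which is precisely the exception stated. For the isolation assertion, suppose $\lam \in \olq$ has $\cri(\lam)$ a gap with more than four vertices. Any sequence $\lam_n \to \lam$ in $\olq$ may be replaced by a q-geolamination sequence via a standard diagonal argument, and Steps~1--2 would then force $\cri(\lam)$ to be a generalized critical quadrilateral unless $\lam_n$ eventually equals $\lam$. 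The hard part will be Step~2, specifically ruling out both that $Q$ is subdivided by further leaves of $\lam$ and that $\cri(\lam)$ is a larger gap of $\lam$ properly containing $Q$: the first is handled by the unlinked property plus the fact that the diagonals of $Q$ are interior to gaps of every $\lam_i$, while the second requires showing that any hypothetical additional vertex of $\cri(\lam)$ on the circle, approximated by vertices of $\cri(\lam_i)$ distinct from those of $Q_i$, would produce a leaf of $\lam_i$ whose $\si_2$-image would cross or exceed $m(\lam_i)$, contradicting the maximality of the major.
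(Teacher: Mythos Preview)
Your Steps 1 and 3 match the paper essentially verbatim. The real divergence is in Step 2, and there your argument has a genuine gap while the paper takes a different and shorter route.

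First, the claim ``the edges of $Q_i$ are leaves of $\lam_i$'' is false in general: when $\cri(\lam_i)$ is a finite gap with more than four vertices, or a Fatou gap (the hyperbolic case), only the two major edges $M_i, M_i'$ of $Q_i$ are leaves of $\lam_i$; the two ``side'' edges of $Q_i$ are interior chords of the gap $\cri(\lam_i)$ and are not leaves. You can still salvage the observation that no leaf of $\lam$ meets the interior of $Q$ (since no leaf of $\lam_i$ meets the interior of $\cri(\lam_i)\supset Q_i$), but this only shows $Q$ lies inside some gap of $\lam$, not that $Q$ equals $\cri(\lam)$. You correctly identify the remaining obstruction --- ruling out a critical gap strictly larger than $Q$ --- as ``the hard part'', but the sketch you give (``would produce a leaf \dots whose $\si_2$-image would cross or exceed $m(\lam_i)$'') is not a proof: extra vertices of $\cri(\lam_i)$ sitting between the endpoints of $M_i$ and $M_i'$ produce edges whose images are \emph{shorter} than $m(\lam_i)$, not longer, so maximality of the major gives no contradiction.

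The paper avoids this direct analysis entirely. It argues by contradiction: if $\cri(\lam)$ had more than four vertices, then by Thurston's classification $\si_2(\cri(\lam))$ is a preperiodic finite gap, so by the rigidity Lemma~\ref{l:limgap3} the gap $\cri(\lam)$ is rigid. Hence every $\lam_i$ close to $\lam$ has $\cri(\lam)$ as its critical set, and Lemma~\ref{l:critidet}(2) then forces all the $\lam_i$ to coincide, contradicting pairwise distinctness. This gives $\cri(\lam)=\si_2^{-1}(m(\lam))$ and the isolation statement in one stroke, with no need for your diagonal argument in Step 4. The moral: the rigidity machinery developed in Section~\ref{s:limitg} (specifically Lemma~\ref{l:limgap3}) is exactly what is meant to do the heavy lifting here.
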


\begin{proof}
By definition of a major of a geolamination and of the convergence in
the Hausdorff metric, majors of $\lam_i$ converge to a major of $\lam$.
Hence $m(\lam_i)\to m(\lam)$. Now, suppose that $\cri(\lam)$ is not a
generalized critical \ql. Then by \cite{thu85} $\cri(\lam)$ has more
than four vertices and $\si_2(\cri(\lam))$ is a preperiodic gap.
However, by Lemma~\ref{l:limgap3}, the set $\cri(\lam)$ is rigid, and
hence geolaminations $\lam_i$ must have $\cri(\lam)$ from some time on
as their critical set. This implies by Lemma~\ref{l:critidet} that all
$\lam_i$'s are equal, a contradiction. If $m(\lam)$ is not a periodic
point, then, by Lemma~\ref{l:critidet}, the fact that
$\cri(\lam)=\si_2^{-1}(m(\lam))$ completely determines $\lam$. The
rest of the lemma follows.
\end{proof}

Theorem~\ref{t:limitlam} describes all geolaminations from $\olq$. A
periodic leaf $\n$ such that the period of its endpoints is $k$ and
all leaves $\n, \si_2(\n), \dots, \si_2^{k-1}(\n)$ are pairwise
disjoint, is said to be a \emph{fixed return} periodic leaf.


\begin{thm}\label{t:limitlam}
A geolamination $\lam$ belongs to $\olq$ if and only there exists a
unique maximal q-geolamination $\lam^q$ coexisting with $\lam$, and
such that either $\lam=\lam^q$ or $\cri(\lam)\subset \cri(\lam^q)$ is a
generalized critical quadrilateral and exactly one of the following
holds.

\begin{enumerate}

\item $\cri(\lam^q)$ is finite and the minor $\si_2(\cri(\lam))=m(\lam)$ of
    $\lam$ is a leaf of $\lam^q$.

\item $\lam^q$ is hyperbolic with a critical Fatou gap $\cri(\lam)$
    of period $n$, and exactly one of the following holds:

\begin{enumerate}

\item $\cri(\lam)=\ol{ab}$ is a critical leaf with a periodic
    endpoint of period $n$, and $\lam$ contains exactly two
    $\si_2^n$-pullbacks of $\ol{ab}$ that intersect $\ol{ab}$
    (one of these pullbacks shares an endpoint $a$ with
    $\ol{ab}$ and the other one shares an endpoint $b$ with
    $\ol{ab}$).

\item $\cri(\lam)$ is a collapsing \ql{} and $m(\lam)$ is a
    fixed return periodic leaf.



\end{enumerate}

\end{enumerate}

Thus, any q-geolamination corresponds to finitely many geolaminations
from $\olq$ and the union of all of their minors is connected.
\end{thm}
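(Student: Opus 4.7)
The plan is to split the theorem into its two directions. The ``only if'' direction says every $\lam\in\olq$ arises from a unique maximal coexisting q-geolamination $\lam^q$ as described; the ``if'' direction says every geolamination produced from a $\lam^q$ via one of the constructions (1), (2)(a), (2)(b) is actually a limit of q-geolaminations. Throughout I would rely on Lemma~\ref{l:limits} (which forces $\cri(\lam)$ to be a generalized critical quadrilateral whenever $\lam$ is the limit of a non-stabilizing sequence of q-geolaminations) and on Lemma~\ref{l:critidet} together with Corollary~\ref{c:critidet} (which say that $\cri(\lam)$ together with one extra datum in the periodic case determines $\lam$).

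For the ``only if'' direction, fix $\lam_i\in\laq_2$ with $\lam_i\to\lam$. If the sequence stabilizes then $\lam=\lam^q\in\laq_2$ and there is nothing to prove. Otherwise Lemma~\ref{l:limits} gives that $\cri(\lam)=Q$ is a generalized critical quadrilateral and $m(\lam)=\si_2(Q)$ is the limit of the minors $m(\lam_i)$. I would then identify $\lam^q$ by taking the Hausdorff limit (along a subsequence) of the q-geolaminations $\lam_i$ restricted to the complements of the pullbacks of $Q$, and using Thurston's classification of quadratic q-geolaminations according to where $m(\lam)$ lies in the minor lamination $\qml$. Two cases occur: either $m(\lam)$ is contained in a finite $\qml$-class, giving case (1) with $\lam^q$ having finite critical set and with $m(\lam)$ itself a leaf of $\lam^q$; or $m(\lam)$ sits inside a periodic Fatou gap region, forcing $\lam^q$ to be hyperbolic with a critical Fatou gap $U\supset Q$ of some period $n$. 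Coexistence of $\lam$ and $\lam^q$ follows because $Q\subset\cri(\lam^q)$ and the rest of $\lam$ is obtained by pulling $Q$ back, which cannot cross leaves of $\lam^q$ by gap-invariance. Uniqueness and maximality of $\lam^q$ come from Lemma~\ref{l:critidet}: any other coexisting q-geolamination must have its critical set containing $Q$, and two q-geolaminations with the same critical structure satisfying the hypotheses of Lemma~\ref{l:critidet} are equal.

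Within the hyperbolic case one then subdivides. If $Q=\ol{ab}$ is a critical leaf then $a,b$ must be endpoints of a periodic edge of $U$, and the pullback structure of $\lam$ inside $U$ forces exactly the two $\si_2^n$-pullbacks of $\ol{ab}$ sharing an endpoint with $\ol{ab}$ to appear in $\lam$ (case (2)(a))---other pullbacks would either cross $\ol{ab}$ or fail to be siblings in the sense of Definition~\ref{d:siblinv}. If $Q$ is a genuine collapsing quadrilateral then $m(\lam)$ is a periodic leaf, and the periodicity of the vertices of $Q$ together with Lemma~\ref{l:inficonlim} forces the period of $m(\lam)$ to equal the period of its endpoints, i.e., $m(\lam)$ is a fixed return leaf (case (2)(b)). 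For the ``if'' direction I would build explicit approximating sequences: in case (1) perturb $m(\lam)$ slightly to a nearby non-periodic chord and take the q-geolamination generated by this perturbation; in cases (2)(a) and (2)(b) perturb the critical set inside $U$ so that the image of $Q$ moves slightly off the periodic position---this produces q-geolaminations in $\laq_2$ whose critical structures converge to $Q$ and therefore whose solids converge to $\lam^+$ by Corollary~\ref{c:critidet}.

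The main obstacle I expect is case (2)(a) and the accompanying final ``connectedness of the union of minors'' assertion. In case (2)(a) one must rule out other ``modifications'' of $\lam^q$ that a priori look admissible: arbitrary combinations of $\si_2^n$-pullbacks of $\ol{ab}$ near the periodic endpoint might seem to give further limit geolaminations, and showing that exactly the two pullbacks sharing endpoints with $\ol{ab}$ appear requires a careful sibling-invariance and no-crossing argument inside $U$, together with Lemma~\ref{l:limgap1.5}(1) to control how $U$ degenerates. The connectedness statement requires tracking how the minor $m(\lam)$ varies as $\lam$ ranges over the finitely many $\olq$-geolaminations corresponding to a fixed $\lam^q$; in the hyperbolic case the minors of the (2)(a) and (2)(b) geolaminations are all concatenated at the common periodic leaf of $\lam^q$, yielding a connected union, while in case (1) there is a single minor.
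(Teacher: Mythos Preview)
Your overall architecture matches the paper's: split into ``only if'' and ``if'', use Lemma~\ref{l:limits} to force $\cri(\lam)$ to be a generalized critical quadrilateral, then branch on whether the minor meets a periodic $\qml$-class. However, there are genuine gaps in the periodic branch that the paper handles with substantially more work than you indicate.

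In case (2)(b) your justification is wrong. The fixed-return condition on $m(\lam)$ does not come from Lemma~\ref{l:inficonlim}. The paper instead uses Lemma~\ref{l:limgap1.5}: if the major $M$ were an edge of a finite periodic gap (the ``rotating'' case) or were flipped by some $\si_2^k$, then $\lam$ would have two gaps sharing the periodic edge $M$, neither of which is a Fatou gap of degree $>1$, contradicting Lemma~\ref{l:limgap1.5}(2). This is what forces $\m$ to have pairwise disjoint forward images.

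Case (2)(a) is where your sketch falls furthest short, and you are right to flag it. First, a slip: $a$ and $b$ are not both endpoints of a periodic edge of $U$; only $a$ is periodic, $b=a+\tfrac12$. More importantly, your claim that ``other pullbacks would either cross $\ol{ab}$ or fail to be siblings'' does not do the job. There are \emph{two} $\si_2^n$-pullbacks of $\ol{ab}$ through $a$, one on each side of $\ol{ab}$ inside $U$, and Lemma~\ref{l:inficonlim} only tells you that at most one of them is in $\lam$. Each choice yields a \emph{different} candidate limit geolamination, and the paper must analyze both. For the choice $\ol{ad}$ with $a<d<b$, the resulting gap $G$ depends on whether $M$ is fixed return: if so, the paper invokes Thurston's Central Strip Lemma plus Lemma~\ref{l:edges} and Lemma~\ref{l:limgap1.5}(1) to pin down $G=\ch(a,b,x)$; if not (the rotating or flipped subcases), a separate argument using Lemma~\ref{l:limgap1.5} shows $M\notin\lam$ and produces a caterpillar-type gap bounded by concatenated pullbacks. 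For the other choice $\ol{da}$ with $b<d<a$, yet another gap structure arises. None of this is captured by a sibling/no-crossing check. Finally, for the ``if'' direction in (2)(a) the paper exhibits the approximating q-geolaminations by perturbing the critical leaf to $\ol{a_1b_1}$ with $a_1$ just to one side of $a$ and using that $a$ is $\si_2^n$-repelling; your appeal to Corollary~\ref{c:critidet} alone is not enough here, since the critical set is a leaf with a periodic endpoint and Lemma~\ref{l:critidet}(3) requires the additional gap data $G,\widehat G$ to determine $\lam$.
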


\begin{proof}
Let $\lam\in\olq\sm \laq_2$. Then $\lam=\lim \lam_i$, where $\lam_i\in \laq_2$.
By \cite{thu85}, the minor $m(\lam)$ is a leaf $\m$ of $\lam_\qml$ or an
endpoint of such a leaf $\m$, or a point $\m$ of $\uc$ that is a class
of $\qml$. Clearly, the full $\si_2$-preimage $\si_2^{-1}(m(\lam))$ of
$m(\lam)$ is a collapsing quadrilateral or a critical chord.

Suppose that $\m$ has no periodic vertices. Then by \cite{thu85}
there is a unique $\si_2$-invariant lamination $\sim$ such that
$T=\cri(\lam_\sim)$ is the convex hull of a $\sim$-class and either
(1) $T$ is a leaf and $\m=\si_2(T)$, or (2) $T$ is a \ql{} and
$\m=\si_2(T)$, or (3) $T$ has more than four vertices, $\m$ is an
edge of $\si_2(T)$ which is a preperiodic gap all of whose edges
eventually map to leaves from the same cycle of leaves. Moreover, by
\cite{thu85} the set $\si_2(T)$ is the convex hull of a class of
$\qml$. Finally, by \cite{thu85} for each pair of sibling
edges/vertices $N, N'$ of $T$ we can add their convex hull $\ch(N,
N')$ to $\lam_\sim$ (i.e., insert two leaves connecting appropriate
endpoints of $N, N'$) and then all appropriate eventual pullbacks of
$\ch(N, N')$ to $\lam_\sim$ to create a $\si_2$-invariant
geolamination with $\si_2(N)$ as its minor.

Observe that by Lemma~\ref{l:critidet} the geolamination with critical
set $\ch(N, N')$ is unique. Thus, all edges and vertices of $\si_2(T)$
are minors of these $\si_2$-invariant geolaminations with collapsing
\ql s; moreover, the minor $m(\lam_\sim)$ is also an edge of $\si_2(T)$
and serves as the minor of two $\si_2$-invariant geolaminations (one of
them is $\lam_\sim$, the other one has the critical set $\ch(M, M')$
where $M, M'$ are majors of $\lam_\sim$). On the other hand, by
\cite{thu85} the $\si_2$-invariant geolaminations other than the just
described have minors disjoint from $\si_2(\cri(\lam_\sim))$. Thus, our
originally given geolamination $\lam$ is one of the just described
geolaminations. Observe that $m(\lam)$ is a vertex or an edge of
$\si_2(T)$ not coinciding with $\si_2(T)$ (if these two sets coincide,
then $\cri(\lam)=T$ and by Lemma~\ref{l:critidet}, we have $\lam=\lam_\sim$, a
contradiction).

Let us show that in fact any $\si_2$-invariant geolamination $\hlam$
with critical set $\ch(N, N')$ (here $N, N'$ are sibling edges of $T$)
is the limit of a sequence of pairwise distinct $\si_2$-invariant
geolaminations. Indeed, by \cite{thu85}, each edge of $T$ can be
approached by non-periodic leaves of $\lam_\qml$ that are convex hulls of classes
of $\qml$, and each vertex of $T$ can be approached by a degenerate
non-periodic class of $\qml$. Choose the $\si_2$-invariant q-geolaminations for
which these leaves/points are minors. We may assume that they converge
to a limit geolamination $\lam'$. This implies that $\lam'$ has a
collapsing \ql{} or a critical leaf as the critical set (the limit of
collapsing \ql s/critical leaves is a collapsing \ql/critical leaf);
clearly, this limit collapsing \ql/critical leaf must coincide with
$\ch(N, N')$. By Lemma~\ref{l:critidet}, this implies that
$\lam'=\hlam$ as claimed.

Assume now that $\m$ has a periodic vertex of period $n$. Then, by
\cite{thu85}, there is a $\si_2$-invariant lamination $\sim$ with the
following properties.
The minor of $\lam_\sim$ is $\m$, and a major $M$ of $\lam_\sim$ is an
edge of a critical Fatou gap $U=\cri(\lam_\sim)$ of period $n$.
Recall that the minor $m(\lam)$ of
$\lam$ is either $\m$ itself, or an endpoint of $\m$. By
Lemma~\ref{l:limits}, we have $m(\lam_i)\to m(\lam)$,
$\cri(\lam)=\si_2^{-1}(m(\lam))$, and if $m(\lam)=\m$ is non-degenerate,
then, by Lemma~\ref{l:limits}, the geolamination $\lam$ is completely determined. Moreover,
assume that $M$ is an edge of a finite periodic gap $G$ of $\lam_\sim$
(informally speaking, $U$ ``rotates'' about $G$ under the appropriate power of $\si_2$). If
$m(\lam)=\m$, then $\lam$ has two finite gaps $G$ and $\si_2^{-1}(\m)$
sharing a periodic edge $M$, a contradiction with
Lemma~\ref{l:limgap1.5}. Moreover, assume that $M$ is ``flipped'' by
the appropriate power, say, $\si_2^k$ of $\si_2$ (then $U$ maps by
$\si^k_2$ to $\si_2^k(U)$, which shares $M$ with $U$ as their common
edge). In this case $\lam$ has gaps $\si_2^{-1}(\m)$ and $\si_2^k(U)$
that share a periodic edge $M$, again a contradiction with
Lemma~\ref{l:limgap1.5}.

To sum it all up, if $m(\lam)=\m$ and hence by Lemma~\ref{l:limits}
we have $\cri(\lam)=\si_2^{-1}(\m)$, then $\m$ must have pairwise
disjoint images until it maps back to itself by $\si_2^n$. By
Lemma~\ref{l:limits}, the geolamination $\lam$ is completely
determined by the fact that $\cri(\lam)=\si_2^{-1}(\m)$ (actually,
$\lam$ can be constructed by pulling back the \ql{} $\si_2^{-1}(\m)$
in a fashion consistent with $\lam_\sim$). On the other hand, if
$\m$ has pairwise disjoint images until it maps back to itself by
$\si_2^n$, then, by \cite{thu85}, the minor $\m$ can be approached by pairwise
disjoint minors of $\si_2$-invariant q-geolaminations. Assuming that
these geolaminations converge, we see that the thus created limit
geolamination must coincide with the above described geolamination
$\lam$. This covers case (2-b).

To consider case (2-a), assume that $m(\lam)$ is an endpoint of $\m$.
By Lemma~\ref{l:limits}, we have $\cri(\lam)=\si_2^{-1}(m(\lam))$, and by the
assumption we may set $\cri(\lam)=\ol{ab}$ where $a$ is an endpoint of
$M$. Properties of $\lam_\sim$ imply that there exist unique pullbacks
of $\ol{ab}$ under the maps $\si_2,$ $\si_2^2,$ $\dots,$ $\si_2^{n-1}$
with endpoints $\si_2^{n-1}(a),$ $\si_2^{n-2}(a),$ $\dots,$ $\si_2(a)$.
However there are two possible pullbacks of $\ol{ab}$ coming out of
$a$. It is easy to see that these two chords are contained in $U$ on
distinct sides of $\ol{ab}$. However they both cannot be leaves of
$\lam$. Indeed, all leaves of $\lam$ with endpoint $a$ form a cone, and
if both chords are leaves of $\lam$, we will have a contradiction with
Lemma~\ref{l:inficonlim}. Consider both cases separately and show that
the corresponding geolamination $\lam$ is completely determined by the
choice of the $\si_2^n$-pullback leaf of $\ol{ab}$ with endpoint $a$.
For definiteness, assume that $M=\ol{xa}$ where $(x, a)$ is a hole of
$U$ (i.e., $(x, a)\cap U'=\0$) and let $M'=\ol{x'b}$ be the edge of $U$,
which is the sibling leaf of $M$.

(1) Assume that $\ol{ad}, a<d<b$ is the $\si_2^n$-pullback of $\ol{ab}$
with endpoint $a$ contained in $U$. Then the sibling $\ol{bd'}$ of
$\ol{ad}$ is also a leaf of $\lam$. Pulling it back under $\si_2^n$, we
see that there is a concatenation $L$ of $\si_2^n$-pullbacks of
$\ol{bd'}$, which accumulates to $x$. Moreover, suppose that $\lam$ has
a leaf $\ol{by}$ where $x<y<a, y\approx a$. Then it follows that
$\si_2^n(\ol{by})$ crosses $\ol{by}$, a contradiction. Hence such
chords are not leaves of $\lam$, which implies that $\lam$ has a gap $G$
with vertices $a, b$ and other vertices belonging to $(b, a)$. By
properties of $\si_2$-invariant geolaminations there exists the
``sibling gap'' $\widehat G$ of $G$ located on the other side of
$\ol{ab}$ ($\widehat G$ is actually a rotation of $G$ by half of the
full rotation). Moreover, the existence of $L$ implies that $G$ has an
edge $\ol{bz}$ and one of the following holds: either (a) $z=d'$, or
(b) $x\le z<a$. It turns out that either of these two cases is realized
depending on the dynamics of $M$; moreover, we will show that to each
of the cases corresponds a unique geolamination, which is completely
determined by the choices we make.

(a) Assume that $M$ is a fixed return periodic leaf. Let us show
that then $z=x$. Indeed, by Corollary~\ref{c:critidet}, pullbacks of
$U$ are dense in $\lam_\sim$. In particular, there are pullbacks of
$U$ approximating $M$ from the outside of $U$. Each pullback of
$U$ which is close to $M$ has two long edges, say, $N$ and $R$,
which converge to $M$ as pullbacks in question converge to $M$.
Choose the pullback $V$ of $U$ so close to $M$ that $N$ and $R$ are
longer than the two edges of $\ch(M, M')$ distinct from $M, M'$. Let
us show that $N$ and $R$ are themselves pullbacks of $M$ and $M'$.
Indeed, let $N'$ and $R'$ be the sibling leaves of $N$ and $R$. By
the Central Strip Lemma (Lemma II.5.1 of \cite{thu85}) and by the
choice of the pullback of $U$ very close to $M$ we see that when,
say, $N$ enters the strip between $N$ and $N'$ it can only grow in
size.

Repeating this argument, we will finally arrive at the moment when
$V$ maps to $U$ when by the above $N$ and $R$ will map to $M$ and
$M'$. This implies that there exists a chord connecting the
appropriate endpoints of $N$ and $R$ and mapping to $\ol{ab}$ at the
same moment. In other words, this shows that there are chords very
close to $M$ which map onto $\ol{ab}$ under a certain iteration of
$\si_2$ and are disjoint from $U$ before that. Hence these chords
must be leaves of $\lam$. Since by construction they accumulate upon
$M$, we see that $M$ must be a leaf of $\lam$. Therefore $d'\le z\le
x$.

Now, assume that $z=d'$. Then the properties of geolaminations
easily imply that $L$ is a part of the boundary of $G$ and that
$\si_2^n(G)=G$. We claim that the only $\si_2^n$-critical edge of
$G$ is $\ol{ab}$. Indeed, no edge of $G$ from $L$ is
$\si_2^n$-critical. On the other hand, if $\ell\subset K$ is a
$\si_2^n$-critical edge of $G$, then a forward image $\si_2^i(\ell),
0<i<n$ of $\ell$ must coincide with $\ol{ab}$. However, this would
imply that either $\si_2^i(G)=G$ (contradicting the fact that $i<n$
and the period of $G$ is $n$) or $\si_2^i(G)=\widehat G$
(contradicting the fact that $\widehat G$ is not periodic). By
Lemma~\ref{l:limgap1.5}(1) edges of $G$ cannot be periodic. Consider
the rest of the boundary of $G$ whose vertices belong to $[x, a]$;
denote this subarc of $\bd(G)$ by $K$. Since $x$ and $a$ are
$\si_2^n$-fixed and the degree of $\si_2^n$ on $\bd(G)$ is one, then
$\si_2^n(K)=K$. Hence it follows that $K$ contains neither
(pre)periodic nor (pre)critical edges of $G$, a contradiction to
Lemma~\ref{l:edges}.

Assume next that $z\ne d'$ is a vertex of $L$. Then
$\si_2(\ol{zb})=\ol{\si_2(z)a}$ which crosses $\ol{bz}$, a
contradiction. This leaves the only possibility for $z$, namely that
$z=x$ and so $G=\ch(a, b, x)$ and $\widehat G=\ch(a, b, x')$. Since
$\ol{ab}$ is isolated in $\lam$, we can remove it from $\lam$ and
thus obtain a new geolamination $\lam'$ which, as follows from
Lemma~\ref{l:critidet}, is completely determined by the critical
\ql{} $\ch(a, x, b, x')$. Adding $\ol{ab}$ and all its pullbacks to
$\lam'$ we finally see that $\lam$ is completely determined by the
fact that $\ol{ad}$ is a leaf of $\lam$ and the fact that $M$ is a
fixed return periodic leaf. In fact, $\lam$ can be viewed as the
$\si_2$-invariant geolamination determined by the choice of the
collapsing \ql{} $\ch(M, M')$ and then inserting in it the critical
leaf $\ol{ab}$.

(b) If $M$ is not a fixed return periodic leaf, there are two
subcases: (i) the orbit of $M$ is the union of edges of several
finite gaps permuted by the corresponding power of $\si_2$, and (ii)
$n=2k$ and $M$ is ``flipped'' by $\si_2^k$. Since the arguments are
very similar, we only consider the case (i). Assume that $n=kl$ and
that the orbit of $M$ consists of edges from the boundaries of $k$
pairwise disjoint $l$-gons $D_1, \dots, D_k$, cyclically permuted
under $\si_2$.

We claim that $M$ is not a leaf of $\lam$. Suppose otherwise. Then
there are two gaps of $\lam$ which share $M$ as their edge. On the
one side of $M$ it is a finite $l$-gon, say, $D_1$ with edge $M$,
one of the above mentioned $l$-gons. On the other side of $M$, it is
the gap $G$ constructed above. Since $G$ cannot be a periodic Fatou
of degree greater than one (it is either a collapsing triangle
$\ch(a, b, x)$ or an infinite gap with concatenation $L$ on its
boundary), we get a contradiction with Lemma~\ref{l:limgap1.5}.
Thus, $M$ and no leaf from its orbit is a leaf of $\lam$.

By pulling $L$ back an appropriate number of times, we obtain a
gap $G$, whose boundary consists of $l$ pullbacks of $L$ concatenated
to each other at vertices of $D_1$; observe again that the edges of
$D_1$ are \emph{not} leaves of $\lam$. This also defines the gap
$\widehat G$. Observe that the existence of these two gaps by
Lemma~\ref{l:critidet} completely determines the corresponding
geolamination as pullbacks of all leaves are now well-defined.

(2) Assume that $\ol{da}$, where $b<d<a$, is the $\si_2^n$-pullback of
$\ol{ab}$ with endpoint $a$ contained in $U$. Then the sibling
$\ol{d'b}$ of $\ol{ad}$ is also a leaf of $\lam$. Pulling $\ol{d'b}$
back under $\si_2^n$, we see (similarly to (1) above) that there is
a concatenation $L'$ of $\si_2^n$-pullbacks of $\ol{d'b}$ with
endpoints contained in $(a, x')$; clearly, $L'$ converges to $a$.
Clearly, $L'$ together with the leaves $\ol{d'b}$ and $\ol{ba}$ form
a Jordan curve. It is easy to verify that any chord connecting two
non-adjacent vertices of $L'$ will cross itself under $\si_2^n$. On
the other hand, the leaf $\ol{ad'}$ cannot exist by
Lemma~\ref{l:inficonlim} (recall that $\ol{ad}\in \lam$). Thus, this
Jordan curve is in fact the boundary of a gap $G$ of $\lam$. The
centrally symmetric to it ``sibling gap'' $\widehat G$ together with
$G$ forms a pair of gaps which $\lam$ must have. By
Lemma~\ref{l:critidet} this completely determines the geolamination
$\lam$. Note that in this case pullbacks of $\ol{ad}$ converge to
$M$ and hence $M$ belongs to $\lam$.

Clearly, the same arguments would apply if $m(\lam)$ were the other
endpoint of $\m$. This completes the description of possible limits of
$\si_2$-invariant q-geolaminations with minors contained in a periodic
minor $\m$ from $\lam_\sim$. We see that to each pair of possible
$\si_2^n$-pullbacks of $\ol{ab}$ there is a unique geolamination which
potentially can be the limit of a sequence of $\si_2$-invariant
q-geolaminations. To show that all the described geolaminations are
indeed limits of sequences of $\si_2$-invariant q-geolaminations, we
need to show that each pair of defining pullbacks of $\ol{ab}$ is
possible and that the geolamination described in (2-b) is also the
limit of a sequence of $\si_2$-invariant q-geolaminations.

To prove the latter, note that by \cite{thu85} we can approximate
the fixed return periodic major $M$ of $\lam$ by majors $M_i$ of
$\si_2$-invariant q-geolaminations $\lam_i\to \lam$ ($M_i$'s are
outside the critical Fatou gap of $\lam$). Then by
Lemma~\ref{l:limits} the collapsing \ql{} $\ch(M, M')$ is the
critical set of $\lam$ and $\lam$ is uniquely determined by that.
This completes (2-b).

Consider now (2-a). Assume that $\m$ is a periodic minor, $M=\ol{xa}$
is the corresponding periodic major, $\ol{ab}$ is the critical leaf,
$x<a<b$, the points $a, x$ are of period $n$, and we want to show that
the $\si_2$-invariant geolamination $\lam$ with $\si_2^n$-pullbacks of
$\ol{ab}$ being $\ol{ad}, \ol{d'b}$ with $b<d'<a<d$ described above is
the limit of a sequence of $\si_2$-invariant q-geolaminations. To show
that, consider a critical leaf $\ell=\ol{a_1b_1}$ with $a_1<a<b_1<b$
very close to $\ol{ab}$. Then the fact that $a$ is repelling for
$\si_2^n$ shows that the appropriate $\si_2^n$-pullbacks of $\ol{ab}$
are indeed close to $\ol{ad}$ and $\ol{d'b}$, and converge to $\ol{ad}$
and $\ol{d'b}$ as $\ell\to \ol{ab}$. Moreover, $\ell$ can always be
chosen to correspond to a $\si_2$-invariant q-geolamination of which
$\ell$ will be the critical leaf (the convex hull of the critical class
of the corresponding lamination). Thus, these particular pullbacks can
be realized on a limit geolamination. Equally simple arguments show
that in fact all possibilities listed in the theorem can be realized.
\end{proof}

To interpret the Mandelbrot set as a specific quotient of the closure
$\olq$ of the family $\laq_2$ of all $\si_2$-invariant
q-geolaminations, we define a special equivalence relation on $\olq$.
The definition itself is based upon the fact that by
Lemma~\ref{l:limits} any geolamination from $\olq$ either belongs to
$\laq_2$ or has a critical leaf, or has a critical quadrilateral
($\si_d$-invariant geolaminations with similar properties are called
\emph{quadratically critical geolaminations}, or simply
\emph{qc-geolaminations} \cite{bopt14}).

\begin{dfn}\label{d:minor}
Let $\lam', \lam''\in \olq$. Then the geolaminations $\lam_1$ and
$\lam_2$ are said to be \emph{minor equivalent} if there exists a
finite collection of geolaminations $\lam_1=\lam',$ $\lam_2,$
$\dots,$ $\lam_k=\lam''$ from $\olq$ such that for each $i, 1\le
i\le k-1$ the minors $m(\lam_i)$ and $m(\lam_{i+1})$ of the
geolaminations $\lam_i$ and $\lam_{i+1}$ are non-disjoint.
\end{dfn}

Theorem~\ref{t:limitlam} allows one to explicitly describe classes of
minor equivalence. Namely, by \cite{thu85} and Theorem~\ref{t:limitlam}
to each class $\g$ of $\qml$ we can associate the corresponding
$\si_2$-invariant q-geolamination $\lam_\g$ and finitely many limit
geolaminations $\lam$ of non-constant sequences of $\si_2$-invariant
q-geolaminations $\lam_i$ such that the minor $m(\lam)$ is the limit of
minors $m(\lam_i)$ of $\lam_i$ and is non-disjoint from (actually,
contained in) $\ch(\g)$. Let $\psi:\olq\to \uc/\qml$ be the map which
associates to each geolamination $\lam\in \olq$ the $\qml$-class of the
endpoints of the minor $m(\lam)$ of $\lam$. By Lemma~\ref{l:limits}, we
obtain the following theorem.

\begin{thm}\label{t:main}
The map $\psi:\olq\to \uc/\qml$ is continuous. Thus, the partition of
$\olq$ into classes of minor equivalence is upper semi-continuous and
the quotient space of{} $\olq$ with respect to the minor equivalence is
homeomorphic to $\uc/\qml$.
\end{thm}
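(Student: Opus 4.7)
The plan is to verify in turn that (i) $\psi$ is well-defined and continuous, (ii) the fibers of $\psi$ are exactly the minor equivalence classes, and (iii) the quotient conclusion then follows by a standard compact-Hausdorff argument.

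For (i), I would first note that $\psi$ is well-defined: by the discussion at the start of the proof of Theorem~\ref{t:limitlam} (which cites \cite{thu85}), for any $\lam\in\olq$ the minor $m(\lam)$ is either a leaf of $\lam_\qml$ or a point that is a $\qml$-class, so in particular its endpoints lie in a single $\qml$-class. To prove continuity, take a convergent sequence $\lam_i\to\lam$ in $\olq$. By Lemma~\ref{l:limits} we have $m(\lam_i)\to m(\lam)$ in the Hausdorff metric on chords of $\cdisk$; hence the endpoints of $m(\lam_i)$ converge in $\uc$ to the endpoints of $m(\lam)$. Composing with the continuous quotient map $\uc\to\uc/\qml$ yields $\psi(\lam_i)\to \psi(\lam)$.

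For (ii), I would use Theorem~\ref{t:limitlam} to identify $\psi^{-1}(\g)$ for a $\qml$-class $\g$. That theorem associates to $\g$ a unique q-geolamination $\lam_\g\in\laq_2$ and finitely many additional limit geolaminations, with the explicit statement that the union of all of their minors is connected in $\cdisk$. Since $\lam_\qml$ is itself a geolamination, its leaves are pairwise unlinked, so two minors are non-disjoint if and only if they share an endpoint; therefore a connected finite union of such minors can be ordered into a chain whose consecutive terms are non-disjoint. Hence all geolaminations in $\psi^{-1}(\g)$ lie in one minor equivalence class. Conversely, if $m(\lam')$ and $m(\lam'')$ are non-disjoint, they share an endpoint in $\uc$, and endpoints of a single leaf of $\lam_\qml$ lie in a single $\qml$-class; so $\psi(\lam')=\psi(\lam'')$, and the claim extends along chains.

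For (iii), $\psi$ is surjective because every $\qml$-class $\g$ is hit by $\lam_\g\in\laq_2\subset\olq$. The space $\olq$ is compact (Hausdorff-metric closure of a subset of the compact space of subcontinua of $\cdisk$), and $\uc/\qml$ is Hausdorff. Hence $\psi$ is a closed continuous surjection; factoring through its fiber partition, which by (ii) coincides with the minor equivalence partition, produces a continuous bijection from the quotient to $\uc/\qml$ that is automatically a homeomorphism. Upper semi-continuity of the partition is then the standard fact that the fibers of a continuous map to a Hausdorff space form an upper semi-continuous decomposition of a compact domain.

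The main obstacle I anticipate is step (ii): carefully matching the chain-based Definition~\ref{d:minor} of minor equivalence with the geometric ``union of minors is connected'' assertion at the end of Theorem~\ref{t:limitlam}, and in particular ensuring that in every fiber every pair of geolaminations can actually be linked by a chain using only geolaminations in $\olq$ (not just any geolaminations with compatible minors). The remaining steps are comparatively routine once Lemma~\ref{l:limits} and Theorem~\ref{t:limitlam} are in hand.
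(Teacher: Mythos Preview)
Your proposal is correct and is essentially the approach the paper intends: the paper gives no separate proof beyond the remark that the theorem follows from Lemma~\ref{l:limits} together with the description preceding the statement (based on Theorem~\ref{t:limitlam}), and you have simply filled in the routine compact--Hausdorff details. The only small point to watch is that Lemma~\ref{l:limits} is literally stated for sequences of q-geolaminations rather than arbitrary sequences in $\olq$, but its argument that $m(\lam_i)\to m(\lam)$ (majors are longest leaves and hence converge under Hausdorff convergence of solids) applies verbatim to any convergent sequence of geolaminations.
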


\bibliographystyle{amsalpha}

\end{document}